\newtheorem{defn}{Definition}[section]
\newtheorem{remark}{\bf Remark}[section]
\newtheorem{lemma}{\bf Lemma}[section]
\newtheorem{example}{\bf  Example}[section]
\newtheorem{theorem}{\bf  Theorem}[section]
\newenvironment{proof}{\noindent{\em Proof:}}{\quad \hfill$\Box$\vspace{2ex}}
\newtheorem{coro}{\bf Corollary}[section]
\def\i{{\bf i}}
 \def\j{{\bf j}}
 \def\k{{\bf k}}
\numberwithin{equation}{section}
\title{Quaternion Fourier and Linear Canonical Inversion Theorems}
\author{Xiao Xiao Hu\thanks{huxiaoxiao3650@163.com}  }
\author{Kit Ian Kou\thanks{Corresponding author: kikou@umac.mo}}
\affil{\normalsize{Department of Mathematics, Faculty of Science and Technology, University of Macau, Macao, China}}
\date{}
\begin{document}

   \maketitle

\begin{abstract}
\normalsize

The Quaternion Fourier transform (QFT) is one of the key tools in studying color image processing. Indeed, a deep understanding of the QFT has created the color images to be transformed as whole, rather than as color separated component. In addition, understanding the QFT paves the way for understanding other integral transform, such as the Quaternion Fractional Fourier transform (QFRFT), Quaternion linear canonical transform (QLCT) and Quaternion Wigner-Ville distribution. The aim of this paper is twofold: first to provide some of the theoretical background regarding the Quaternion bound variation function. We then apply it to derive the Quaternion Fourier and linear canonical inversion formulas. Secondly, to provide some in tuition for how the Quaternion Fourier and linear canonical inversion theorems work on the absolutely integrable function space.
\end{abstract}

\begin{keywords}
Inversion theorem;  functions of bound variation; quaternion
Fourier transform.
\end{keywords}
\section{Introduction}\label{S1}

Fourier inversion plays a crucial role in signal processing and communications: it tells us how to convert an continuous signal into some trigonometric functions, which can then be processed digitally or coded on a computer \cite{papoulis1960fourier}.
 The inversion theorem (also name Fourier integral theorem) states that the input signal or image can be retrieved from its according frequency function via the inversion Fourier transform. In mathematics \cite{stein1971introduction,papoulis1960fourier,PWJ2000}, there are two common conditions versions of the Fourier inversion theorem hold for integrable function.

\begin{itemize}
  \item For a real-valued integrable function $f$, if $f$ is a function of bounded variation in the neighborhood of $x_{0},$ then
  \begin{eqnarray*}
 \frac{f(x_{0}+0)+f(x_{0}-0)}{2}= \lim_{\begin{subarray}
  MM \to \infty
\end{subarray}}\frac{1}{2\pi}\int_{-M}^{M}\widehat{f}(u)e^{\i ux_{0}}du .
\end{eqnarray*}

  \item
  If both $f$ and $\widehat{f}$ are integrable, then $f$ can be recovered from its Fourier transform $\widehat{f},$
  \begin{eqnarray*}
  f(x)=\frac{1}{2\pi}\int_{\mathbb{R}}\widehat{f}(u)e^{\i ux}du,
\end{eqnarray*}
for almost every $x$, where $\widehat{f}(u)=\int_{\mathbb{R}}f(x)e^{-\i ux}du.$
\end{itemize}

There have been numerous proposals in the literature to generalize the classical Fourier
transform (FT) by making use of the Hamiltonian quaternion algebra \cite{ouyang2015color, yang2015novel}, namely quaternion Fourier transforms (QFTs).
Quaternion algebra \cite{hamilton1866elements} is thought to generalize the classical theory of holomorphic functions of one complex variable onto the multidimensional situation, and to provide the foundations for a refinement of classical harmonic analysis. In the meantime, quaternion algebra has become a well established mathematical discipline and an active area of research with numerous connections to other areas of both pure and
applied mathematics. In particular, there is a well developed theory of quaternion
analysis with many applications to Fourier analysis and partial differential
equations theory, as well as to other fields of physics and engineering \cite{took2009quaternion,took2011augmented,gou2015three}.
The QFTs play a vital role in the representation of multidimensional
(or quaternionic) signals. They transform a 2D real (or quaternionic-valued) signals into
the quaternionic-valued frequency domain signals. The four components of the QFTs separate four
cases of symmetry into real signals instead of only two as in the complex Fourier transforms \cite{yang2015novel}.
The QFTs have been found many applications in color image processing, especially in color-sensitive smoothing, edge detection and data compression etc \cite{evans2000hypercomplex, evans2000colour,pei1999color,sangwine1996fourier,ell2007hypercomplex,ell2014quaternion}.  In \cite{chen2015pitt}, the authors studied the inversion theorem of QFTs for square integrable functions, the convergence of the quaternion Fourier integral is in mean square norm.
To the best of our knowledge, there has been no previous work (at least systematically) studying the conditions of quaternion Fourier inversion theorem for integrable functions. Therefore, it is worthwhile and interesting to investigate them.

Over the last few years, there has been a growing interest of the classical linear canonical transform (LCT) in engineering, computer sciences, physics and applied mathematics \cite{kou2012windowed, healy2016linear}. The LCT is a linear integral transformation of a four-parameter function and it can be considered as the generalization of the fractional Fourier transform (FRFT) and the FT. Comparing to the FRFT and the FT, the LCT has shown to be more flexible for signal processing. Therefore, it is desirable to extend the LCT to higher dimensions and to study its properties. To this end, quaternionic analysis offers possibilities of generalizing the underlying function theory from 2D to 4D, with the advantage of meeting exactly the same goals. See Refs. \cite{ell2014quaternion, hitzer2007quaternion} for a more complete account of this subject and
related topics. A higher-dimensional extension of the LCT within the Clifford analysis setting was first studied in \cite{kou2013generalized}. The paper generalizes
the theory of prolate spheroidal wave functions (also called Slepian functions) and it analyzes the energy preservation problems. Quaternion linear canonical transforms (QLCTs) \cite{kou2013uncertainty} are a family of integral transforms, which generalized the QFT and quaternion fractional Fourier transform (QFRFT) \cite{guanlei2008fractional, guo2011reduced, chen2015pitt, kou2014asymptotic, yang2014uncertainty}.
Some important properties of  QLCTs,  such as convolution and Parseval theorems have been studied in \cite{guanlei2008fractional, guo2011reduced, chen2015pitt, hitzer2007quaternion, kou2013uncertainty, kou2014asymptotic, pei2001efficient, yang2014uncertainty, bahri2013convolution, de2015connecting}. Some studies \cite{hitzer2007quaternion, kou2014asymptotic, pei2001efficient} were briefly introduced the inversion theorem for QLCT and QFT, without a clear proof on his existence. In \cite{guanlei2008fractional}, authors proved  the reversibility of QFRFT, without clearly states the conditions of the existence. Motivating  of the above study, the main purpose of this paper is to solve the following two problems.

\begin{itemize}
\item {\bf Problem A:} If 2D quaternionic-valued integrable function $f$ is a quaternion bounded variation function (please refer to Definition \ref{def312}), then  can $f$ be recovered from their QFTs or/and QLCTs functions?
\item {\bf Problem B:} If 2D quaternionic-valued function $f$ and its QFTs or/and QLCTs  are both integrable, then can $f$ be recovered from their QFTs or/and QLCTs?
\end{itemize}
We notice that the solutions of these two problems have not been carrying out in the literature.
The outline of the paper is as follows. In order to make it self-contained, in Section \ref{sec2}, we collect some basic concepts of quaternionic analysis, QFTs, QLCTs and the 2D real functions of bounded variation to be used throughout the paper. We prove
 the inversion  theorems of  QFTs  and  QLCTs under different conditions for integrable functions in Section \ref{sec3}.
 Some conclusions are drawn, and future works are proposed in section \ref{sec4}.

\section{Preliminary}\label{sec2}

The present section collects some basic facts about quaternions, QFTs, QLCTs, and 2D bounded variation function, which will be needed throughout this paper.

\subsection{The QFTs and QLCTs}
\mbox{}\indent

 Let $\mathbb{H}$ denote the {\it Hamiltonian skew field of quaternions}:
\begin{eqnarray}
\mathbb{H} := \{q=q_0+\i q_1+ \j q_2+\k q_3 \, | \, q_0, q_1, q_2, q_3\in\mathbb{R}\} ,\label{hu1}
\end{eqnarray}
which is an associative non-commutative four-dimensional algebra. The basis elements $\{\i, \j, \k \}$ obey the Hamilton's multiplication rules:
 $\i^2=\j^2=\k^2=-1$, $\i \j =-\j \i =\k$, $\j \k =-\k \j =\i$ and $\k \i=-\i \k =\j$. In this way the quaternionic algebra arises as a natural extension of the complex field $\mathbb{C}$.
In this paper, the complex field $\mathbb{C}$ can be regarded as the 2D plane which is spanned by $ \{ 1, \i\}$.
The {\it quaternion conjugate} of a quaternion $q$ is defined by
\begin{eqnarray*}
\overline{q}=q_0- \i q_1- \j q_2- \k q_3,\quad q_0, q_1, q_2, q_3\in\mathbb{R}.\label{hu2}
\end{eqnarray*}
The modulus of $q\in\mathbb{H}$  is defined as
\begin{eqnarray*}
|q| = \sqrt{q\overline{q}} = \sqrt{\overline{q}q} = \sqrt{q_0^2+q_1^2+q_2^2+q_3^2}.
\end{eqnarray*}
It is not difficult to see that
\begin{eqnarray*}
\overline{qp}=\overline{p}\, \overline{q},\quad |q|=|\overline{q}|,\quad |qp|=|q||p|,\quad \forall \, q,p \in\mathbb{H}.
\end{eqnarray*}

By the Equation (\ref{hu1}), a quaternionic-valued function  $f:\mathbb{R}^2\to\mathbb{H}$ can be expressed in the following form:
\begin{eqnarray*}
f(s,t)=f_0(s,t)+\i f_1(s,t)+\j f_2(s,t)+ \k f_3(s,t),\label{liu209}
\end{eqnarray*}
where $f_{n}\in\mathbb{R}, n=0,1,2,3$.

Let $L^{p}(\mathbb{R}^2 , \mathbb{H}),$ ( integers $ p \geq$ 1) be  the right-linear quaternionic-valued Banach space in $\mathbb{R}^2,$ whose quaternion modules are
defined as follows:
\begin{eqnarray*}
L^{p}(\mathbb{R}^2, \mathbb{H}):=\left\{ f \,| \, f:\mathbb{R}^2 \to \mathbb{H}, \Vert f \Vert_{p}:=\left (\int_{\mathbb{R}^2} |f(s,t)|^p dsdt \right )^{\frac{1}{p}} <\infty \right\}.
\end{eqnarray*}

Due to the non-commutative property of multiplication of quaternions, there are different types of QFTs \cite{hitzer2007quaternion} and  QLCTs \cite{kou2013uncertainty}, respectively. The $\textbf{two-sided}$  QFT:
\begin{eqnarray}\label{twosidedqft}
\mathcal{F}_{T}(u, v):=\int_{\mathbb{R}^2} e^{-{ \i}us}f(s,t)e^{{- \j}vt}dsdt.\label{hu24}
\end{eqnarray}
The $\textbf{right-sided}$   QFT:
\begin{eqnarray*}
\mathcal{F}_{R}(u, v):=\int_{\mathbb{R}^2} f(s,t)e^{-{ \i}us}e^{-{ \j}vt}dsdt.\label{hu25}
\end{eqnarray*}
The $\textbf{left-sided}$   QFT:
\begin{eqnarray*}
\mathcal{F}_{L}(u, v):=\int_{\mathbb{R}^2} e^{-{ \i}us}e^{-{ \j}vt}f(s,t)dsdt.\label{h25}
\end{eqnarray*}
\par
 The QLCTs are the generalization of QFTs,
let $A_{i}=\left(
             \begin{array}{cc}
               a_{i} &b_{i} \\
               c_{i}  &d_{i}  \\
             \end{array}
           \right)
 \in \mathbb{R}^{2\times 2} $
 be  real matrixes  parameter with unit determinant, i.e. $det(A_{i})$=$ a_{i}d_{i}-c_{i}b_{i}=1, $ for $i=1,2$.

\begin{eqnarray}\label{hu26}
 \mathcal{L}_{T}^{ \i,\j}(f)(u,v):=  \left\{
\begin{array}{llll}
\int_{\mathbb{R}^2} K_{A_{1}}^{ \i}(s,u)f(s,t)K_{A_{2}}^{ \j}(t,v)dsdt
& b_{1}, b_{2}\neq 0,\\[1.5ex]
\int_{\mathbb{R}^2} \sqrt{d_{1}} e^{\i\frac{c_{1}d_{1}u^{2}}{2}}f(du,t)K_{A_{2}}^{\j}(t,v)dt
 & b_{1}=0, b_{2}\neq 0, \\[1.5ex]
\int_{\mathbb{R}^2} K_{A_{1}}^{\i}(s,u)f(s,dv)\sqrt{d_{2}} e^{\j\frac{c_{2}d_{2}v^{2}}{2}}ds
& b_{1}\neq 0, b_{2} =0, \\[1.5ex]
\sqrt{d_{1}} e^{\i\frac{c_{1}d_{1}u^{2}}{2}}f(du,dv)\sqrt{d_{2}} e^{\j\frac{c_{2}d_{2}v^{2}}{2}}
& b_{1}= 0, b_{2} =0.,
\end{array}\right.
\end{eqnarray}
 where  the kernels $K_{A_{1}}^{ \i}$ and $K_{A_{2}}^{ \j}$ of  the QLCT are given by
\begin{eqnarray*}
K_{A_{1}}^{\i}(s,u):=\frac{1}{ \sqrt{\i2\pi b_{1}}}e^{\i(\frac{a_{1}}{2b_{1}}s^{2}-\frac{1}{b_{1}}us+\frac{d_{1}}{2b_{1}}u^{2} )}
 \quad and \quad
K_{A_{2}}^{\j}(t,v):=\frac{1}{\sqrt{\j2\pi b_{2}}}e^{\j(\frac{a_{2}}{2b_{2}}t^{2}-\frac{1}{b_{2}}tv+\frac{d_{2}}{2b_{2}}v^{2} )},
\end{eqnarray*} respectively.
Note that when $b_i=0$ $(i=1,2)$, the QLCT of a function is essentially
a chirp multiplication and is of no particular interest for our objective interests.
Hence, without loss of generality, we set
$b_i >0$ $(i=1,2)$ throughout the paper.


Let $\mathcal{L}_{R}^{ \i,\j}(f)(u,v)$ and $\mathcal{L}_{L}^{ \i,\j}(f)(u,v)$ be the $ \textbf{right-sided}$ and $\textbf{left-sided}$  QLCTs, respectively. They are defined by
\begin{eqnarray*}
 \mathcal{L}_{R}^{ \i,\j}(f)(u,v):=
\int_{\mathbb{R}^2}f(s,t)K_{A_{1}}^{ \i}(s,u)K_{A_{2}}^{ \j}(t,v)dsdt
\end{eqnarray*}
and
\begin{eqnarray*}
 \mathcal{L}_{L}^{ \i,\j}(f)(u,v):=
\int_{\mathbb{R}^2}K_{A_{1}}^{ \i}(s,u)K_{A_{2}}^{ \j}(t,v)f(s,t)dsdt,
\end{eqnarray*} respectively.

It is significant to note that the QLCT converts to its special cases while we take different matrices $A_{n}, n=1,2 $  \cite{guanlei2008fractional,pei2001efficient}. For example,
when  $A_{1}=A_{2}=\left(
             \begin{array}{cc}
               0 &1 \\
               -1  &0  \\
             \end{array}
           \right)$,
the QLCT   reduces to the QFT times $ \sqrt{\frac{-\i}{2\pi}}$ and  $\sqrt{\frac{-\j}{2\pi}}$,
where $ \sqrt{-\i}=e^{-\i\pi/4}$ and  $\sqrt{-\j}=e^{-\j\pi/4}$.
 If $ A_{1}=\left(
             \begin{array}{cc}
               \cos \alpha & \sin \alpha \\
               -\sin \alpha  & \cos\alpha  \\
             \end{array}
           \right)
, A_{2}=\left(
             \begin{array}{cc}
               \cos \beta & \sin \beta \\
               -\sin \beta & \cos\beta  \\
             \end{array}
           \right)$,
 the QLCT  becomes the QFRFT  multiplied with the fixed phase factors $ e^{-\i\alpha/2}, e^{-\j\beta/2} $.
\par

\subsection{2D Real Bounded Variation Functions Revisited}

In 1881, Jordan \cite{jordan1881serie} introduced the 1D bounded variation  functions and applied them to the Fourier theory. Hereinafter 1D bounded variation  functions was generalized  to 2D bounded variation  functions  by many authors, for instance \cite{clarkson1933definitions, adams1934properties}.  In the following, we apply definition by Hardy \cite{hardy1906double}, it is a natural generalization of  1D bounded variation functions.  Many important properties are analogous to the 1D case, such as the well-known Dirichlet-Jordan theorem \cite{zygmund2002trigonometric}. It sates that the Fourier series of a bounded variation function $f$ converges at almost every point $x \in [0, 2\pi]$ to the value $ \frac{f(x+0)+f(x-0)}{2}$. Hardy \cite{hardy1906double} generalized  the theorem to the double Fourier series case. In this subsection, we reveiw some properties of 2D bounded variation functions $(BVFs)$, which are required for the subsequent derivations. For a more detailed presentation, please refer to  \cite{adams1934properties,clarkson1933definitions}.

The 2D real  function $f$ is assumed to be defined in a rectangle $\mathbb{E}$ $(a_{1} \leq  s \leq b_{1}, a_{2}\leq t \leq b_{2})$. By the term net we shall, unless otherwise specified, mean a set of parallels to the axis:
$ a_{1}=s_{0}<s_{1}< \cdots <s_{m}=b_{1}, $
and
$  a_{2}=t_{0}<t_{1}< \cdots <t_{n}=b_{2}. $

Each of the smaller rectangles into which $\mathbb{E}$ is divided by a net will be called a cell, we employ the notations
 $$ \triangle _{10}f(s_{i},t_{j}):=f(s_{i+1},t_{j})-f(s_{i},t_{j}),$$
  $$ \triangle _{01}f(s_{i},t_{j}):=f(s_{i},t_{j+1})-f(s_{i},t_{j}),$$
    $$ \triangle _{11}f(s_{i},t_{j})=\triangle _{10} ( \triangle _{01} f)(s_{i},t_{j}):
    =f(s_{i+1},t_{j+1})-f(s_{i+1},t_{j})-f(s_{i},t_{j+1})+f(s_{i},t_{j}).$$

 \begin{defn}\label{De31}
 \cite{clarkson1933definitions}
Let $f: [a_{1},  b_{1}]\times [a_{2}, b_{2}] \rightarrow \mathbb{R} $ be said to be a bounded variation function $(BVF)$, if it satisfies the following conditions:
 \begin{itemize}
   \item   the sum $\sum_{i=0}^{m-1}\sum_{j=0}^{n-1}\big |\triangle _{11}f(s_{i},t_{j})\big |$ is bounded for all nets,
   \item   $f(\tilde{s},t)$ considered as a function of $t$ alone in the interval  $[a_{2}, b_{2}]$ is of 1D bounded variation
   for at least one $ \tilde{s} $ in $[a_{1},  b_{1}]$,
   \item  $f(s, \tilde{t})$ considered as a function of $s$ alone in the interval $[a_{1},  b_{1}]$ is of 1D bounded variation
   for at least one $\tilde{t} $ in $[a_{2}, b_{2}]$.
 \end{itemize}
 \end{defn}

 \begin{remark}\label{re32}
From Definition $\ref{De31}$, it follows that if $f$ and $g$ are both $BVFs$,  then  $f \pm g$ and $f g$ are also BVFs.
\end{remark}

The well-known  Jordan decomposition Theorem states that a 1D real function $f$ is bounded variation if it can be written as a difference of two monotone increasing functions. The 2D $BVF$ in Definition $\ref{De31}$ has the analogue result.  Before to proceed, we need the following definition of 2D monotone increasing function.

  \begin{defn}\label{de33}\cite{clarkson1933definitions}
 The 2D real function $f$ which satisfies the following conditions everywhere in its domain is called a quasi-monotone function:
 \begin{itemize}
   \item  $ \triangle _{11}{_{(s_{0},t_{0})}^{(s_{1},t_{1})}}f(s,t):=f(s_{1},t_{1})-f(s_{1},t_{0})-f(s_{0},t_{1})+f(s_{0},t_{0})\geq 0, \mbox{for } s_{1}
   \geq  s_{0} , t_{1} \geq t_{0} $,
   \item  $f(s,t)$ is monotone and non-diminishing with respect for $s$, for every constant value of $t$,
   \item   $f(s,t)$ is monotone and non-diminishing with respect for $t$, for every constant value of $s$.
 \end{itemize}
 \end{defn}

  \begin{lemma}\cite{hobson1907theory}\label{le38}
If $f(s,t)$ is a quasi-monotone function,
then four double limits $$ f(s- 0,t-0 )=\lim_{\begin{subarray}
hh \to 0^{+}\\k \to 0^{+}
\end{subarray}}f(s- h,t-k ),  f(s+ 0,t-0 )=\lim_{\begin{subarray}
hh \to 0^{+}\\k \to 0^{+}
\end{subarray}}
 f(s+ h,t-k ),$$  $$f(s- 0,t+0 )=\lim_{\begin{subarray}
hh \to 0^{+}\\k \to 0^{+}
\end{subarray}}f(s- h,t+k ),  f(s+0,t+0 )=\lim_{\begin{subarray}
hh \to 0^{+}\\k \to 0^{+}
\end{subarray}}f(s+ h,t+k ), $$
 all exist and have definite numbers.
\end{lemma}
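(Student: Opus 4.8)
The plan is to treat the four one--sided limits one at a time and to reduce every case to a single elementary fact. \emph{Auxiliary fact:} if $G$ is real--valued and bounded on a box $(0,\delta]\times(0,\delta]$ and is non--decreasing in each variable separately (equivalently, $G(h,k)\le G(h',k')$ whenever $h\le h'$ and $k\le k'$), then
\begin{eqnarray*}
\lim_{h,k\to 0^{+}}G(h,k)=\inf_{0<h,k\le\delta}G(h,k)\in\mathbb{R},
\end{eqnarray*}
and, symmetrically, the double limit equals $\sup G$ if $G$ is non--increasing in each variable. This is immediate: given $\varepsilon>0$ choose $(h_{0},k_{0})$ with $G(h_{0},k_{0})<\inf G+\varepsilon$; then monotonicity forces $\inf G\le G(h,k)\le G(h_{0},k_{0})<\inf G+\varepsilon$ for all $(h,k)$ with $0<h\le h_{0}$, $0<k\le k_{0}$, a neighbourhood of the corner. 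Note that near $(s,t)$ the function $f$ is automatically bounded, since conditions (2) and (3) of Definition \ref{de33} sandwich each value $f(s\pm h,t\pm k)$ between $f$ at two fixed neighbouring points; at a point on the boundary of $\mathbb{E}$ one simply restricts $h,k$ to the admissible side, which changes nothing below.

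\emph{The two ``aligned'' corners.} For $f(s+0,t+0)$ put $G(h,k):=f(s+h,t+k)$; by conditions (2)--(3) this is non--decreasing in each variable and bounded, so the auxiliary fact gives the limit (equal to $\inf G$). For $f(s-0,t-0)$ take $G(h,k):=f(s-h,t-k)$, which is non--increasing in each variable, and the limit equals $\sup G$. These two cases use only the separate monotonicity of $f$ and not the cross--difference condition (1).

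\emph{The two ``mixed'' corners}, which are the real point: here separate monotonicity does not suffice (for instance $G(h,k)=h/(h+k)$ is non--decreasing in $h$, non--increasing in $k$ and bounded, yet has no corner limit), so condition (1) must enter. For $f(s+0,t-0)$ I would use the splitting, valid for small $h,k>0$,
\begin{eqnarray*}
f(s+h,t-k)=f(s+h,t)-\psi_{k}(h),\qquad \psi_{k}(h):=f(s+h,t)-f(s+h,t-k)\ge 0 .
\end{eqnarray*}
The first term depends on $h$ alone: it is a bounded non--decreasing function of $s+h$, so $f(s+h,t)\to f(s+0,t)$ as $h\to0^{+}$. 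The remainder $\psi_{k}(h)$ is non--negative and non--decreasing in $k$ for fixed $h$ by condition (3), and, crucially, non--decreasing in $h$ for fixed $k$, because for $h_{2}\le h_{1}$ the difference $\psi_{k}(h_{1})-\psi_{k}(h_{2})$ equals $\triangle_{11}f$ over the cell $[s+h_{2},s+h_{1}]\times[t-k,t]$, which is $\ge 0$ by condition (1) of Definition \ref{de33}; moreover $\psi_{k}(h)$ is bounded, lying between $0$ and $f(s+\delta,t)-f(s,t-\delta)$. Hence the auxiliary fact applies to $\psi$, so $\lim_{h,k\to0^{+}}\psi_{k}(h)$ exists, and therefore $f(s+h,t-k)\to f(s+0,t)-\lim\psi_{k}(h)$, a definite real number. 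The remaining corner $f(s-0,t+0)$ is the mirror image: write $f(s-h,t+k)=f(s,t+k)-\rho_{h}(k)$ with $\rho_{h}(k):=f(s,t+k)-f(s-h,t+k)\ge0$; observe $f(s,t+k)\to f(s,t+0)$, check from conditions (1)--(3) that $\rho_{h}(k)$ is non--decreasing in both $h$ and $k$ and bounded, and apply the auxiliary fact.

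The only step that requires any care is the choice of splitting in the mixed cases: one must peel off a genuinely one--variable monotone term so that the remainder inherits single--signed monotonicity in \emph{both} variables — and that monotonicity in the ``new'' direction is exactly what the cross--difference inequality $\triangle_{11}f\ge0$ supplies. Everything else (the auxiliary monotone--limit fact, the boundedness bookkeeping, and the reduction to the admissible side at a boundary point) is routine.
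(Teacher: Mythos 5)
Your argument is correct. The paper itself gives no proof of Lemma \ref{le38} --- it is cited from Hobson --- so there is nothing internal to compare against, but your proposal supplies a complete and sound proof. The two aligned corners follow, as you say, from separate monotonicity and boundedness alone (and $f$ is indeed automatically bounded on the rectangle by conditions (2)--(3) of Definition \ref{de33}). The essential content is your treatment of the mixed corners: the counterexample $G(h,k)=h/(h+k)$ correctly shows that separate monotonicity cannot suffice there, and your splitting $f(s+h,t-k)=f(s+h,t)-\psi_{k}(h)$ isolates a one-variable monotone term while the identity $\psi_{k}(h_{1})-\psi_{k}(h_{2})=\triangle_{11}f$ over $[s+h_{2},s+h_{1}]\times[t-k,t]$ is exactly where the cross-difference condition (1) enters to give $\psi$ joint monotonicity, after which your auxiliary monotone-limit fact applies; the mirror case is handled identically. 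I verified the sign and monotonicity claims in each case and found no gaps.
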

 \begin{example}\label{ex34}
 $f(s,t)=e^{s}e^{t}$ is a quasi-monotone function.
 \end{example}
 \par
The quasi-monotone function has lots of good properties, please refer to \cite{hobson1907theory} for more detail.
From the  Definition $\ref{de33}$, it leads to the fact that if $f(s,t)$ is  bounded quasi-monotone function then  $f(s,t)$ is  the $BVF.$

\begin{lemma} \label{le36}\cite{adams1934properties}
 A necessary and sufficient condition that $f(s,t)$ is  the $BVF$ is that it can be represented as the difference between two
 bounded functions, $f_{1}(s,t)$ and $f_{2}(s,t)$, satisfying the inequalities
 $$\triangle _{11}f_{i}(s,t)\geq 0,\triangle _{01}f_{i}(s,t)\geq 0,\triangle _{10}f_{i}(s,t)\geq 0, i=1,2.$$
\end{lemma}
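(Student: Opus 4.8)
\noindent\emph{Proof strategy.} The plan is to treat the two directions separately: sufficiency is immediate, while necessity is the genuine two-dimensional Jordan decomposition and carries all the work. For sufficiency, suppose $f=f_1-f_2$ with $f_1,f_2$ bounded and $\triangle_{11}f_i\geq0$, $\triangle_{10}f_i\geq0$, $\triangle_{01}f_i\geq0$ on $\mathbb{E}$. Then each $f_i$ is a bounded quasi-monotone function in the sense of Definition \ref{de33}, hence a $BVF$ by the observation recorded after Example \ref{ex34}, and therefore $f=f_1-f_2$ is a $BVF$ by Remark \ref{re32}; no computation is needed.

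For necessity, assume $f$ is a $BVF$ on $\mathbb{E}=[a_1,b_1]\times[a_2,b_2]$ and put $V:=\sup\sum_{i,j}|\triangle_{11}f(s_i,t_j)|<\infty$, the supremum over all nets of $\mathbb{E}$. First I would record a ``propagation'' remark: combining the single-slice hypotheses of Definition \ref{De31} with the finiteness of $V$, one checks that $s\mapsto f(s,t)$ is of 1D bounded variation on $[a_1,b_1]$ for every fixed $t$ and $t\mapsto f(s,t)$ is of 1D bounded variation on $[a_2,b_2]$ for every fixed $s$; in particular the edge functions admit classical Jordan decompositions $f(s,a_2)=p_1(s)-n_1(s)$ and $f(a_1,t)=p_2(t)-n_2(t)$ into bounded non-decreasing functions. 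Next, introduce the partial Vitali positive and negative variations
$$P(s,t):=\sup\sum_{i,j}\big(\triangle_{11}f(s_i,t_j)\big)^{+},\qquad N(s,t):=\sup\sum_{i,j}\big(\triangle_{11}f(s_i,t_j)\big)^{-},$$
with suprema over nets of $[a_1,s]\times[a_2,t]$; both are $\leq V$ and vanish on the edges $s=a_1$ and $t=a_2$. Since for any net $\sum_{i,j}\triangle_{11}f(s_i,t_j)$ telescopes to $f(s,t)-f(s,a_2)-f(a_1,t)+f(a_1,a_2)$ independently of the net, one obtains at once the identity
$$f(s,t)-f(s,a_2)-f(a_1,t)+f(a_1,a_2)=P(s,t)-N(s,t).$$

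Granting that $P$ and $N$ are quasi-monotone (that is, $\triangle_{11}\geq0$, $\triangle_{10}\geq0$, $\triangle_{01}\geq0$ for both), the decomposition is assembled by setting
$$f_1(s,t):=P(s,t)+p_1(s)+p_2(t),\qquad f_2(s,t):=N(s,t)+n_1(s)+n_2(t)+f(a_1,a_2).$$
Then $f=f_1-f_2$ by the displayed identity and the edge decompositions, each $f_i$ is bounded, and since $p_1,n_1$ depend on $s$ only and $p_2,n_2$ on $t$ only their $\triangle_{11}$ vanishes while their $\triangle_{10}$ and $\triangle_{01}$ are $\geq0$ by monotonicity; combined with the quasi-monotonicity of $P,N$ this gives $\triangle_{11}f_i\geq0$, $\triangle_{10}f_i\geq0$, $\triangle_{01}f_i\geq0$, as required.

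The main obstacle is therefore the quasi-monotonicity of the partial variation $P$ (and identically $N$). Monotonicity in each variable separately, $\triangle_{10}P\geq0$ and $\triangle_{01}P\geq0$, follows from a refinement argument: adjoining a column (resp. row) of cells to a net of $[a_1,s]\times[a_2,t]$ can only add nonnegative terms to $\sum(\triangle_{11}f)^{+}$. The genuinely two-dimensional point is $\triangle_{11}P\geq0$; here I would prove the additivity
$$P(s,t_1)-P(s,t_0)=\sup\sum_{i,j}\big(\triangle_{11}f(s_i,t_j)\big)^{+}\ \text{over nets of }[a_1,s]\times[t_0,t_1]$$
for $t_1\geq t_0$, by inserting $t_0$ as a node and using that refinement increases the sum together with cell-additivity of $\triangle_{11}$ and the elementary inequality $(x+y)^{+}\leq x^{+}+y^{+}$, and then observing that the right-hand side is non-decreasing in $s$ by the same refinement argument; this yields $P(s_1,t_1)-P(s_1,t_0)\geq P(s_0,t_1)-P(s_0,t_0)$ for $s_1\geq s_0$. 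An analogous computation handles $N$, and the bookkeeping of the edge terms completes the argument. This is in essence the reasoning of Adams and Clarkson \cite{adams1934properties}, transcribed into the present notation.
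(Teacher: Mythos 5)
The paper does not prove this lemma at all: it is quoted verbatim from Adams--Clarkson \cite{adams1934properties} as a known result, so there is no in-paper argument to compare yours against. Judged on its own terms, your proof is correct and is essentially the classical two-dimensional Jordan decomposition. The sufficiency direction is handled exactly as one would expect (bounded quasi-monotone functions are $BVF$s, and differences of $BVF$s are $BVF$s by Remark \ref{re32}). For necessity, all the essential ingredients are present and sound: the propagation of one-dimensional bounded variation to every slice via the finiteness of the Vitali sum $V$; the identity $P-N=f(s,t)-f(s,a_2)-f(a_1,t)+f(a_1,a_2)$, which follows correctly from the fact that $\sum\triangle_{11}f$ telescopes to a net-independent constant so that $\sup\Sigma^{+}=C+\sup\Sigma^{-}$; and the reduction of $\triangle_{11}P\geq0$ to additivity of the positive variation across a horizontal cut, proved by refinement, cell-additivity of $\triangle_{11}$, and $(x+y)^{+}\leq x^{+}+y^{+}$. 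The additivity step is only sketched, but you correctly identify it as the crux and supply the right ingredients (inserting the cut as a node, passing to a common $s$-partition of the two strips), and the bookkeeping with the edge decompositions $p_1,n_1,p_2,n_2$ assembles the required $f_1,f_2$ with all three difference operators nonnegative. This is a faithful reconstruction of the Adams--Clarkson argument rather than a new route.
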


\begin{remark}\label{re37}
From  Definition $\ref{de33}$ and Lemma $\ref{le36}$, we can generalize the Jordan decomposition Theorem for BVF from one dimension to two dimension. If $f(s,t)$ is the $BVF$ if and only if it can be represented  as the
difference between two bounded quasi-monotone functions $f_{1}$ and $f_{2}$, i.e  $f= f_{1}-f_{2}$ .

\end{remark}

The mean value theorem pay a important role in 1D Fourier theory.
For the higher dimensional cases, we present 2D extension  as follows.

\begin{lemma}\cite{hobson1907theory}\label{le39}
 Let $\mathbb{E}$ be the plane rectangle $ [a_{1} ,a_{2}]\times [b_{1},  b_{2}]$,
 the non-negative function $f(s,t)$ is of  quasi-monotone in $\mathbb{E}$,  and $g(s,t)$ is summable in $\mathbb{ E}$.
Then
\begin{eqnarray*}
 \int_{a_{1}}^{b_{1}}\int_{ a_{2}}^{ b_{2}}f(s,t)g(s,t)dsdt=f(b_{1}-0 , b_{2}-0)\int_{\xi_{1} }^{b_{1}}\int_{ \xi_{2}}^{ b_{2}}g(s,t)dsdt,
 \end{eqnarray*}
for some point $(\xi_{1} ,\xi_{2}) $ in $ \mathbb{E} $.
\end{lemma}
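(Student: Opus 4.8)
\section*{Proof proposal}

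The plan is to exploit the quasi-monotonicity of $f$ by realizing it as a two-dimensional Stieltjes distribution function, transferring the weight $f$ onto the ``upper--right tail'' integrals of $g$ by Fubini, and then closing with an intermediate-value argument. It is worth noting first why the naive route fails: iterating the classical one-dimensional second mean value theorem, say first in $t$, produces $\int_{a_2}^{b_2}f(s,t)g(s,t)\,dt=f(s,b_2-0)\int_{\eta(s)}^{b_2}g(s,t)\,dt$ with a breakpoint $\eta(s)$ depending on $s$, and collapsing this into a single rectangle $[\xi_1,b_1]\times[\xi_2,b_2]$ after integrating in $s$ is precisely what the mixed-difference hypothesis $\triangle_{11}f\geq 0$ must be used for. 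So a genuinely planar device is needed.

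First I would pass to the right-continuous modification of $f$: since $f$ is monotone in each variable it is continuous off a set contained in a countable union of coordinate lines, so this modification agrees with $f$ Lebesgue-a.e.\ and changes neither $\int\int_{\mathbb{E}}fg$ nor the corner limit $f(b_1-0,b_2-0)$ (which exists by Lemma \ref{le38}). The three conditions of Definition \ref{de33} --- $f\geq 0$, coordinatewise non-decreasing, and $\triangle_{11}f\geq 0$ --- are exactly what makes the rectangle increments $\triangle_{11}f$, together with the edge increments $\triangle_{10}f(\,\cdot\,,a_2)$ and $\triangle_{01}f(a_1,\,\cdot\,)$ and the corner value $f(a_1,a_2)$, assemble into a non-negative, countably additive premeasure on half-open subrectangles; its Carath\'{e}odory extension is a finite non-negative Borel measure $\nu_f$ on $\mathbb{E}$ with $f(s,t)=\nu_f([a_1,s]\times[a_2,t])$ and total mass $\nu_f(\mathbb{E})=f(b_1-0,b_2-0)$. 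This is the two-dimensional counterpart of Lemma \ref{le36} and Remark \ref{re37}.

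Next, writing $f(s,t)=\int\int_{\mathbb{E}}\mathbf{1}[s'\leq s]\,\mathbf{1}[t'\leq t]\,d\nu_f(s',t')$ and substituting into $\int\int_{\mathbb{E}}fg$, I would exchange the order of integration; this is legitimate because $\nu_f$ is finite, $g$ is summable, and the absolute iterated integral equals $\int\int_{\mathbb{E}}|g(s,t)|f(s,t)\,ds\,dt\leq f(b_1-0,b_2-0)\,\|g\|_{1}<\infty$. The outcome is $\int_{a_1}^{b_1}\int_{a_2}^{b_2}f(s,t)g(s,t)\,ds\,dt=\int\int_{\mathbb{E}}G(s',t')\,d\nu_f(s',t')$, where $G(s',t'):=\int_{s'}^{b_1}\int_{t'}^{b_2}g(s,t)\,ds\,dt$ is continuous on the compact connected rectangle $\mathbb{E}$ by dominated convergence. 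If $f(b_1-0,b_2-0)=0$ the identity holds trivially (both sides vanish, by monotonicity of $f$); otherwise the number $\nu_f(\mathbb{E})^{-1}\int\int_{\mathbb{E}}G\,d\nu_f$ lies between $\min_{\mathbb{E}}G$ and $\max_{\mathbb{E}}G$, so by the intermediate value theorem on the connected set $\mathbb{E}$ it equals $G(\xi_1,\xi_2)$ for some $(\xi_1,\xi_2)\in\mathbb{E}$, and multiplying through by $\nu_f(\mathbb{E})=f(b_1-0,b_2-0)$ yields the stated formula.

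The main obstacle is the construction of $\nu_f$ in the second step: one must verify that the increments of $f$ furnish a non-negative set function that is finitely --- hence, by a standard compactness/regularity argument, countably --- additive on the algebra of finite unions of half-open subrectangles of $\mathbb{E}$, before invoking the Carath\'{e}odory extension theorem. The hypothesis $\triangle_{11}f\geq 0$ is indispensable here: without it the candidate premeasure can be negative, as is already visible for the indicator of an ``L-shaped'' upper set, whose $\triangle_{11}$ equals $-1$ on a suitable cell. One should also keep track of the half-open/closed and left-/right-continuity conventions so that the total mass comes out as $f(b_1-0,b_2-0)$ rather than $f(b_1,b_2)$; the discrepancy is supported on a Lebesgue-null set and disappears when $f$ is continuous at the corner $(b_1,b_2)$, which is the case for the quasi-monotone kernels used later in the paper. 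Once $\nu_f$ is in hand, the Fubini exchange and the intermediate-value step are entirely routine.
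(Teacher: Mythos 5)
The paper does not actually prove this lemma: it is imported verbatim from Hobson \cite{hobson1907theory}, so there is no internal proof to compare against and your argument has to stand on its own. It does. Your route --- realize the quasi-monotone $f$ as the distribution function of a finite non-negative Borel measure $\nu_f$ via $f(s,t)=\nu_f([a_1,s]\times[a_2,t])$, swap orders by Fubini to get $\iint_{\mathbb{E}}G\,d\nu_f$ with $G(s',t')=\int_{s'}^{b_1}\int_{t'}^{b_2}g$, and finish with the intermediate value theorem on the connected compact rectangle --- is correct, and you have put the weight in the right places: the three clauses of Definition \ref{de33} are used exactly once each (corner mass $f(a_1,a_2)\geq 0$, edge increments $\geq 0$, cell increments $\triangle_{11}f\geq 0$), the Fubini bound $\iint f|g|\leq f(b_1-0,b_2-0)\|g\|_1$ is the right justification, and the degenerate case $f(b_1-0,b_2-0)=0$ forces $f\equiv 0$ on the open rectangle so both sides vanish. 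The classical proof in Hobson/Hardy is the discrete shadow of yours: double Abel (partial) summation on Riemann sums of $\sum f\,g$, trapping the sum between $f(b_1-0,b_2-0)$ times the minimum and maximum of the tail sums of $g$; it avoids the Carath\'{e}odory extension, which is indeed the one genuinely technical step in your version and which you correctly identify as the main burden. Two small housekeeping remarks: the rectangle in the statement should read $[a_1,b_1]\times[a_2,b_2]$ (a typo in the paper that you silently and correctly repair), and the a.e.\ modification you invoke is licensed by the fact that the discontinuity set of a quasi-monotone function is contained in countably many coordinate lines (each jump of $f(\cdot,t)$ is dominated, via $\triangle_{11}f\geq 0$, by the corresponding jump of $f(\cdot,b_2)$), hence is Lebesgue-null --- worth stating explicitly, since it is what guarantees that neither $\iint fg$ nor the corner limit is disturbed.
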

\setcounter{equation}{0}

\setcounter{equation}{0}
\section{Main results}\label{sec3}
In this  section, we firstly focus on  the inversion  theorem of 2D Fourier transform $(FT)$ by using  the  properties  of the BVFs,   then  we drive the inversion 2D QFTs  and QLCTs theorems.
In the following, the cross-neighborhood of $ (x_{0},y_{0})$ \cite{hobson1907theory} defined by the set of pinits
\begin{eqnarray*}
|s-x_{0}|\leq \varepsilon_{1},\quad or \setminus and  \quad  |t-y_{0}|\leq \varepsilon_{2}, \quad (\varepsilon_{i}>0,i=1,2 )
\end{eqnarray*}

\begin{defn}
$f(s,t) $ is called to belong to $ \bf{L}$ class $( \bf{LC})$ in the cross- neighborhood of $(x_{0},y_{0})$, if f satisfies:
$$
\int_{\varepsilon_{2}}^{\infty}\int_{0}^{\varepsilon_{1}} \left | \frac{ \tilde{f}(s,t)-\tilde{f}(a,t)}{s} \right |dsdt < \infty
$$
and
$$
\int_{\varepsilon_{1}}^{\infty}\int_{0}^{\varepsilon_{2}} \left | \frac{ \tilde{f}(s,t)-\tilde{f}(s,b)}{t} \right |dtds < \infty,
$$
where
$\tilde{f}(s,t)=f(x_{0}-s, y_{0}-t)+ f(x_{0}+s, y_{0}+t)+f(x_{0}-s, y_{0}+t)+f(x_{0}+s, y_{0}-t),  \varepsilon_{n}>0,$ n=1,2, and $a\in\mathbb{ A}:=\{ s \in \mathbb{R}| \int_{\mathbb{R}}| \tilde{f}(s,t)|dt < \infty \}, b\in \mathbb{B}:=\{ t \in \mathbb{R}| \int_{\mathbb{R}}| \tilde{f}(s,t)|ds < \infty \}.$
\end{defn}
 \begin{remark}
 If $\int_{\mathbb{R}^{2}} |\tilde{f}(s,t)|dsdt < \infty$, the Fubini theorem implies that $ \int_{\mathbb{R}}| \tilde{f}(s,t)|dt < \infty $  holds for $s$ almost everywhere, and  $ \int_{\mathbb{R}}| \tilde{f}(s,t)|ds < \infty $  holds for $t$ almost everywhere, then $\mathbb{A}$ and $\mathbb{B}$ are the real line except a measurable zero set.

 \begin{example}
 If $f(s,t)=f_{1}(s)f_{2}(t)\in L^{1}(\mathbb{R}^{2}, \mathbb{R} ), \frac{\partial f(s,t)}{\partial s}|_{s=x_{0}} $ and $\frac{\partial f(s,t)}{\partial s}|_{t=y_{0}} $ exist,
 then $f(s,t) \in \bf{LC} $.
 \end{example}
 \end{remark}

\begin{remark}
In this paper, we use the swash capital  and capital to denote the QFT and 2D FT  respectively.
\end{remark}
\subsection{Problem  A for QFTs}
 In this subsection, first, by using the good properties of the BVF, we  drive  inversion theorem for 2D FT, the situation is by the similar argument as the proof of the convergence of two-dimensional Fourier series  \cite{hardy1906double},
Second,  the  bounded  variation function is defined  in quaternion fields, and  the inversion theorem of two-sided QFT is proved. Finally  we generalize that our idea  to the all types of QFTs.

\begin{lemma}\cite{PWJ2000}\label{le310}
 For any given  real number $ a, b,$ we have
 $$ \left |\int_{a}^{b}\frac {\sin t}{ t}dt \right | \leq 6.$$
\end{lemma}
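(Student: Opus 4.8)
The plan is to reduce the estimate to two elementary observations about the integrand $\frac{\sin t}{t}$ (understood to equal $1$ at $t=0$): near the origin one has $\left|\frac{\sin t}{t}\right|\le 1$, so $\left|\int_a^b\frac{\sin t}{t}\,dt\right|\le|b-a|$; and away from the origin an integration by parts converts $\int\frac{\sin t}{t}\,dt$ into a boundary term plus an absolutely convergent integral with a $1/t^2$ tail. Since $t\mapsto\frac{\sin t}{t}$ is even and $\int_a^b=-\int_b^a$, it suffices to bound $I(a,b):=\int_a^b\frac{\sin t}{t}\,dt$ when $0\le a\le b$; the cases $b\le 0$ and $a<0<b$ will be recovered at the end by the substitution $t\mapsto -t$ and by splitting the integral at $0$.

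First I would handle $0\le a\le b$ in three sub-cases. If $b\le 1$, then directly $|I(a,b)|\le\int_a^b\frac{|\sin t|}{t}\,dt\le\int_a^b 1\,dt\le 1$, using $|\sin t|\le t$ for $t\ge 0$. If $a\ge 1$, integration by parts gives
\[
 I(a,b)=\frac{\cos a}{a}-\frac{\cos b}{b}-\int_a^b\frac{\cos t}{t^2}\,dt ,
\]
so that $|I(a,b)|\le\frac{1}{a}+\frac{1}{b}+\int_a^b\frac{dt}{t^2}=\frac{2}{a}\le 2$. In the remaining case $a<1<b$, splitting at $t=1$ and adding the two previous bounds yields $|I(a,b)|\le|I(a,1)|+|I(1,b)|\le 1+2=3$. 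Hence $|I(a,b)|\le 3$ whenever $0\le a\le b$.

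Finally, for arbitrary reals $a\le b$: if $b\le 0$, the substitution $t\mapsto -t$ together with evenness gives $|I(a,b)|=|I(-b,-a)|\le 3$; and if $a\le 0\le b$, then $|I(a,b)|\le|I(a,0)|+|I(0,b)|=|I(0,-a)|+|I(0,b)|\le 3+3=6$, which is exactly the asserted bound. I do not foresee a genuine obstacle here: the only step that takes any thought is the integration-by-parts tail estimate, and the constant $6$ is deliberately wasteful — in fact the exact supremum of $|I(a,b)|$ over all reals is $2\int_0^\pi\frac{\sin t}{t}\,dt\approx 3.70$ — so no sharp numerical estimate of the sine integral is needed.
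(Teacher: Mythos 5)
Your proof is correct. Note that the paper itself does not prove this lemma at all: it is quoted as a known fact from the reference \cite{PWJ2000} (a Fourier analysis textbook), so there is no in-paper argument to compare against. Your argument is the standard self-contained one and every step checks out: the bound $\lvert\sin t/t\rvert\le 1$ gives $\lvert I(a,b)\rvert\le 1$ on $[a,b]\subset[0,1]$; the integration by parts identity $I(a,b)=\frac{\cos a}{a}-\frac{\cos b}{b}-\int_a^b\frac{\cos t}{t^2}\,dt$ gives $\lvert I(a,b)\rvert\le \frac{1}{a}+\frac{1}{b}+\bigl(\frac{1}{a}-\frac{1}{b}\bigr)=\frac{2}{a}\le 2$ for $1\le a\le b$; splitting at $t=1$ gives $3$ on $[0,\infty)$; and evenness plus splitting at the origin doubles this to $6$, exactly the stated constant. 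Your closing remark that the true supremum is $2\int_0^\pi\frac{\sin t}{t}\,dt\approx 3.70$ is also accurate and correctly signals that the constant $6$ is generous, which is all the paper needs (the lemma is only used to absorb a uniform bound on partial sine integrals in the proof of Theorem 3.1). No gaps.
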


We first proceed the 2D Fourier inversion theorem of real function.
\begin{theorem}($\textbf{ 2D Fourier  Inversion Theorem}$)\label{th311}
Suppose that $f \in L^{1}(\mathbb{R}^2, \mathbb{R})$,   $f$ is the  BVF and belongs to $\bf{LC}$ in a   cross-neighborhood of $(x_{0},y_{0})$,
then
\begin{eqnarray}\label{h825}
 \eta(x_{0},y_{0})= \lim_{\begin{subarray}
  NN \to \infty\\ M \to \infty
\end{subarray}}\frac{1}{4\pi^2}\int_{-N}^{N}\int_{-M}^{M}F(u,v)e^{\i ux_{0}}e^{\i vy_{0}}dudv,
\end{eqnarray}
where
$$\eta(x_{0},y_{0}):=\frac {f(x_{0}+0,y_{0}+0)+f(x_{0}+0,y_{0}-0)+f(x_{0}-0,y_{0}+0)+f(x_{0}-0,y_{0}-0)}{ 4},$$
and the 2D FT is defined by
\begin{eqnarray}\label{e9}
F(u,v):=\int_{\mathbb{R}^{2}}f(s,t)e^{-\i us}e^{-\i vt}dsdt.
\end{eqnarray}
If $f$ is continuous  at  $(x_{0},y_{0})$,
then $\eta(x_{0},y_{0})=f(x_{0},y_{0})$.

\end{theorem}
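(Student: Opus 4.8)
The plan is to collapse the partial double Fourier integral in \eqref{h825} into a two--dimensional Dirichlet--kernel integral and then localize it near $(x_{0},y_{0})$. First I would substitute the definition \eqref{e9} of $F(u,v)$ into the right--hand side of \eqref{h825} and interchange the orders of integration; this is justified by Fubini's theorem because over the bounded frequency box $[-M,M]\times[-N,N]$ the integrand is dominated by $4MN\,|f(s,t)|$, which is integrable since $f\in L^{1}(\mathbb{R}^{2},\mathbb{R})$. Carrying out the inner $u$-- and $v$--integrations produces the product of one--dimensional Dirichlet kernels $\frac{\sin M(x_{0}-s)}{\pi(x_{0}-s)}\cdot\frac{\sin N(y_{0}-t)}{\pi(y_{0}-t)}$. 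After the substitution $s\mapsto x_{0}-s$, $t\mapsto y_{0}-t$ and splitting $\mathbb{R}^{2}$ into its four closed quadrants --- legitimate since, for fixed $M,N$, the integrand is dominated by $MN\,|f(x_{0}-s,y_{0}-t)|$ so each quadrant integral converges absolutely --- the evenness of $\frac{\sin Ms}{\pi s}$ and $\frac{\sin Nt}{\pi t}$ folds the four pieces into
$$
\int_{0}^{\infty}\int_{0}^{\infty}\tilde f(s,t)\,\frac{\sin Ms}{\pi s}\cdot\frac{\sin Nt}{\pi t}\,ds\,dt .
$$
Since $\int_{0}^{\infty}\frac{\sin Ms}{\pi s}\,ds=\tfrac12$ for every $M>0$, and since (by Remark \ref{re37} and Lemma \ref{le38}) the four corner limits $f(x_{0}\pm0,y_{0}\pm0)$ exist so that $\tilde f(0^{+},0^{+})$ equals their sum, i.e. $4\,\eta(x_{0},y_{0})$, it suffices to prove that this integral tends to $\tfrac14\tilde f(0^{+},0^{+})$ as $M,N\to\infty$.

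Next I would fix $\varepsilon_{1},\varepsilon_{2}>0$ as furnished by the hypotheses, so that $f$ is a BVF on $[x_{0}-\varepsilon_{1},x_{0}+\varepsilon_{1}]\times[y_{0}-\varepsilon_{2},y_{0}+\varepsilon_{2}]$ and both $\mathbf{LC}$--integrals over these $\varepsilon$'s are finite, and partition $(0,\infty)^{2}$ into $R_{1}=(0,\varepsilon_{1})\times(0,\varepsilon_{2})$, $R_{2}=(0,\varepsilon_{1})\times[\varepsilon_{2},\infty)$, $R_{3}=[\varepsilon_{1},\infty)\times(0,\varepsilon_{2})$ and $R_{4}=[\varepsilon_{1},\infty)\times[\varepsilon_{2},\infty)$ (each piece being absolutely convergent for fixed $M,N$ as noted above). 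On $R_{4}$ the factor $\frac{1}{\pi^{2}st}$ is bounded, so $\tilde f(s,t)/(\pi^{2}st)\in L^{1}(R_{4})$ because $\tilde f$ is a sum of four functions of the form $(s,t)\mapsto f(x_{0}\pm s,y_{0}\pm t)$ lying in $L^{1}$, and the Riemann--Lebesgue lemma makes the $R_{4}$--contribution vanish. On $R_{2}$ (and symmetrically $R_{3}$) the factor $\frac1t$ is bounded but $\frac1s$ is not, so here I would invoke the $\mathbf{LC}$ hypothesis: writing $\tilde f(s,t)=[\tilde f(s,t)-\tilde f(a,t)]+\tilde f(a,t)$ with $a\in\mathbb{A}$ from that hypothesis, the first summand contributes a kernel dominated by $\tfrac{1}{\pi^{2}\varepsilon_{2}}\big|\tfrac{\tilde f(s,t)-\tilde f(a,t)}{s}\big|\in L^{1}(R_{2})$, whose integral vanishes by Riemann--Lebesgue, while the second summand factors as $\big(\int_{0}^{\varepsilon_{1}}\frac{\sin Ms}{\pi s}\,ds\big)\big(\int_{\varepsilon_{2}}^{\infty}\tilde f(a,t)\frac{\sin Nt}{\pi t}\,dt\big)$, in which the first factor is bounded by $\tfrac6\pi$ (Lemma \ref{le310}) and the second tends to $0$ by Riemann--Lebesgue since $\tilde f(a,\cdot)\in L^{1}(\mathbb{R})$. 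Hence the $R_{2}$-- and $R_{3}$--contributions also vanish as $M,N\to\infty$.

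The core of the proof is the contribution of $R_{1}$, where the bounded--variation structure enters. On $[0,\varepsilon_{1}]\times[0,\varepsilon_{2}]$ the function $\tilde f$ is a BVF, being a finite sum of reflected restrictions of $f$ (Remark \ref{re32}), so by the two--dimensional Jordan decomposition (Remark \ref{re37}) I would write $\tilde f=g_{1}-g_{2}$ with $g_{1},g_{2}$ bounded quasi--monotone and treat each $g_{j}$ separately. Setting $c_{j}:=g_{j}(0^{+},0^{+})$, which exists by Lemma \ref{le38}, the function $h_{j}:=g_{j}-c_{j}$ is non--negative quasi--monotone with $h_{j}(0^{+},0^{+})=0$. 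Splitting $g_{j}=c_{j}+h_{j}$, the constant part yields $c_{j}\big(\int_{0}^{\varepsilon_{1}}\frac{\sin Ms}{\pi s}\,ds\big)\big(\int_{0}^{\varepsilon_{2}}\frac{\sin Nt}{\pi t}\,dt\big)\to\tfrac14 c_{j}$. For the $h_{j}$ part, given $\epsilon>0$ I would pick $\delta\le\min(\varepsilon_{1},\varepsilon_{2})$ with $h_{j}<\epsilon$ on $(0,\delta]^{2}$ and split $R_{1}$ into $(0,\delta)^{2}$ and the three ``L--shaped'' remainder rectangles. On $(0,\delta)^{2}$, Lemma \ref{le39} rewrites the integral as $h_{j}(\delta-0,\delta-0)\big(\int_{\xi_{1}}^{\delta}\frac{\sin Ms}{\pi s}\,ds\big)\big(\int_{\xi_{2}}^{\delta}\frac{\sin Nt}{\pi t}\,dt\big)$ for some $(\xi_{1},\xi_{2})\in[0,\delta]^{2}$, which by Lemma \ref{le310} is $\le\tfrac{36}{\pi^{2}}\epsilon$ uniformly in $M,N$; on each remainder rectangle one variable stays $\ge\delta$, and applying Lemma \ref{le39} there together with the elementary estimate $\big|\int_{A}^{B}\frac{\sin w}{w}\,dw\big|\le 2/A$ for $0<A\le B$ shows these pieces tend to $0$ as $M,N\to\infty$ for the fixed $\delta$, the far rectangle $[\delta,\varepsilon_{1}]\times[\delta,\varepsilon_{2}]$ being dispatched by Riemann--Lebesgue. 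Thus $\limsup_{M,N\to\infty}$ of the $h_{j}$ part is $\le\tfrac{36}{\pi^{2}}\epsilon$ for every $\epsilon>0$, hence $0$, and the $R_{1}$--contribution converges to $\tfrac14(c_{1}-c_{2})=\tfrac14\tilde f(0^{+},0^{+})=\eta(x_{0},y_{0})$. Summing the four regional limits gives \eqref{h825}, and when $f$ is continuous at $(x_{0},y_{0})$ the four corner limits all equal $f(x_{0},y_{0})$, whence $\eta(x_{0},y_{0})=f(x_{0},y_{0})$.

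I expect the main obstacle to be precisely this $R_{1}$ analysis: one must apply the two--dimensional second mean value theorem (Lemma \ref{le39}) on exactly the right subrectangles and control the independent double limit $M,N\to\infty$, the point being that in the ``mixed'' regions the decay comes not from smallness of $h_{j}$ but from the oscillation of a single sine factor over an interval bounded away from the origin, producing an $O(1/M)$ or $O(1/N)$ estimate. A secondary technical point is the legitimacy of the quadrant splitting in the first step, since the Dirichlet kernel is not in $L^{1}(\mathbb{R}^{2})$; this is handled by the crude bound $\frac{|\sin M\xi|}{|\xi|}\le\min\{M,|\xi|^{-1}\}$, valid for each fixed $M$.
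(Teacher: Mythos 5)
Your proposal is correct and follows essentially the same route as the paper's own proof: reduce to the product Dirichlet kernel via Fubini, fold into the first quadrant, split $(0,\infty)^2$ into four regions, handle the corner region with the two-dimensional Jordan decomposition into quasi-monotone parts plus the second mean value theorem (Lemmas \ref{le39} and \ref{le310}), and dispatch the mixed and far regions with the $\mathbf{LC}$ hypothesis and the Riemann--Lebesgue lemma. The only difference is organizational --- you subtract the corner constants $c_j$ inside $R_1$ and add a further $\delta$-split there, whereas the paper subtracts $4\eta$ globally and chooses $\epsilon_1,\epsilon_2$ directly from the smallness of the quasi-monotone parts; your variant is if anything slightly more careful about keeping the $\mathbf{LC}$ parameters fixed.
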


\begin{proof}
Since $f \in L^{1}(\mathbb{R}^2, \mathbb{ R})$, then $F(u,v)=\int_{\mathbb{R}^{2}}f(s,t)e^{-\i us}e^{-\i vt}dsdt $ is well defined.
\\
Set
$$I(x_{0},y_{0},N,M):=\frac{1}{4\pi^2}\int_{-N}^{N}\int_{-M}^{M}F(u,v)e^{\i ux_{0}}e^{\i vy_{0}}dudv,$$

inserting the definition of $F(u,v)$, we have
\begin{eqnarray*}
I(x_{0},y_{0},N,M)=&&\frac{1}{4\pi^2}\int_{-N}^{N}\int_{-M}^{M}\int_{-\infty}^{\infty}\int_{-\infty}^{\infty}f(s,t)e^{-\i us}e^{-\i vt}e^{\i ux_{0}}e^{\i vy_{0}}dsdtdudv\\
=&&\frac{1}{4\pi^2}\int_{-\infty}^{\infty}\int_{-\infty}^{\infty}f(s,t)\int_{-N}^{N}\int_{-M}^{M}e^{\i u(x_{0}-s)}e^{\i v(y_{0}-t)}dudvdsdt.\\
\end{eqnarray*}
Switching the order of integration is permitted by the Fubin theorem, because of $ f \in L^{1}(\mathbb{R}^2, \mathbb{R})$.\\
\begin{eqnarray*}
I(x_{0},y_{0},N,M)=&&\int_{-\infty}^{\infty}\int_{-\infty}^{\infty}f(s,t)\frac{\sin [M(x_{0}-s)]}{\pi(x_{0}-s)}\frac{\sin [N(y_{0}-t)]}{\pi(y_{0}-t)}dsdt\\
=&&\int_{-\infty}^{\infty}\int_{-\infty}^{\infty}f(x_{0}-s,y_{0}-t)\frac{\sin (Ms)}{\pi s}\frac{\sin (Nt)}{\pi t}dsdt\\
=&&\int_{0}^{\infty}\int_{0}^{\infty}\big(f(x_{0}-s,y_{0}-t)+f(x_{0}+s,y_{0}-t)\\
&&+f(x_{0}-s,y_{0}+t)+f(x_{0}+s,y_{0}+t)\big)\frac{\sin Ms}{\pi s}\frac{\sin Nt}{\pi t}dsdt.
\end{eqnarray*}

Since
\begin{eqnarray*}
\int_{0}^{\infty}\int_{0}^{\infty}\frac{\sin (Ms)}{\pi s}\frac{\sin (Nt)}{\pi t}dsdt=\frac{1}{4},
\end{eqnarray*}
then
\begin{eqnarray*}
I(x_{0},y_{0},N,M) - \eta(x_{0},y_{0}) =&&
\int_{0}^{\infty}\int_{0}^{\infty}(f(x_{0}-s,y_{0}-t)+f(x_{0}+s,y_{0}-t)+f(x_{0}-s,y_{0}+t)\\
&&+f(x_{0}+s,y_{0}+t)-4\eta(x_{0},y_{0}))\frac{\sin (Ms)}{\pi s}\frac{\sin (Nt)}{\pi t}dsdt.
\end{eqnarray*}
Let

$$\phi_{(x_{0},y_{0})}(s,t):=\tilde{f}(s,t)-4\eta(x_{0},y_{0}),$$
where $\tilde{f}(s,t)=f(x_{0}-s,y_{0}-t)+f(x_{0}+s,y_{0}-t)+f(x_{0}-s,y_{0}+t)+f(x_{0}+s,y_{0}+t). $\\
Then $$ \lim_{\begin{subarray}
tt \to 0^{+}\\s \to 0^{+}
\end{subarray}}\phi_{(x_{0},y_{0})}(s,t)=\tilde{f}(s,t)-4\eta(x_{0},y_{0}) =0.$$

Applying  Remarks \ref{re32} and \ref{re37}, $\phi_{(x_{0},y_{0})}$is also a  BVF and can be expressed as
the difference between two bounded quasi-monotone functions $ h_{1},$ and $ h_{2}.$
i.e., $$\phi_{(x_{0},y_{0})}=h_{1}-h_{2}.$$
Furthermore, the quasi-monotone functions $ h_{1}$ and $h_{2}$  satisfy
$$ \lim_{\begin{subarray}
tt \to 0^{+}\\s \to 0^{+}
\end{subarray}} h_{i}(s,t)=0, \quad i=1,2, \quad respectively.$$
Therefore $ h_{1}(s,t) $ and $h_{2}(s,t)$ are non-negative in the rectangle $ [0, \delta_{1}] \times [0, \delta_{2}]. $
For any $ \varepsilon >0$, there exists $ \epsilon_{1} >0,\epsilon_{2} >0$, such that $ 0 <\epsilon_{1}<\delta_{1}, 0<\epsilon_{2}<\delta_{2},$
$$ 0\leq h_{i}(s,t)<\varepsilon, \quad for  \quad all \quad 0 <s\leq \epsilon_{1},0 <t\leq\epsilon_{2}. $$
Now we divide the  $ [0, \infty ) \times [0, \infty ) $   into four parts  with the $\epsilon_{1},\epsilon_{2}. $
\begin{eqnarray}
I(x_{0},y_{0},N,M) - \eta(x_{0},y_{0}) =&&\int_{0}^{\epsilon_{2}}\int_{0}^{\epsilon_{1}}\left(h_{1}(s,t)-h_{2}(s,t)\right)\frac{\sin (Ms)}{\pi s}\frac{\sin (Nt)}{\pi t}dsdt
\nonumber \\
&&+\int_{\epsilon_{2}}^{\infty}\int_{0}^{\epsilon_{1}}\left(\tilde{f}(s,t)-4\eta(x_{0},y_{0})\right)\frac{\sin (Ms)}{\pi s}\frac{\sin (Nt)}{\pi t}dsdt
\nonumber \\
&&+\int_{0}^{\epsilon_{2}}\int_{\epsilon_{1}}^{\infty}\left(\tilde{f}(s,t)-4\eta(x_{0},y_{0})\right)\frac{\sin (Ms)}{\pi s}\frac{\sin (Nt)}{\pi t}dsdt
\nonumber \\
&&+\int_{\epsilon_{2}}^{\infty}\int_{\epsilon_{1}}^{\infty}\left(\tilde{f}(s,t)-4\eta(x_{0},y_{0})\right)\frac{\sin (Ms)}{\pi s}\frac{\sin (Nt)}{\pi t}dsdt. \label{hu32}
\end{eqnarray}

Let $I_{1}$ denote the first term of the Equation (\ref{hu32}), we can obtain that

\begin{eqnarray*}
|I_{1}| \leq \sum_{i=1}^{2} \frac{2}{\pi^2}\left | \int_{0 }^{\epsilon_{1}}\int_{0}^{ \epsilon_{2}}h_{i}(s,t)\frac{\sin (Ms)}{ s}\frac{\sin (Nt)}{ t}dsdt\right |.
\end{eqnarray*}
By the Lemma \ref{le39}, there exist two points $\big\{(\xi^{(1)}_{i} , \xi^{(2)}_{i}), i=1,2\big \}$ such that

\begin{eqnarray*}
\sum_{i=1}^{2} \frac{2}{\pi^2}\left | \int_{0 }^{\epsilon_{1}}\int_{0}^{\epsilon_{2}}h_{i}(s,t)\frac{\sin (Ms)}{ s}\frac{\sin (Nt)}{ t}dsdt\right|
= \sum_{i=1}^{2}h_{i}(\epsilon_{1},\epsilon_{2}) \frac{2}{\pi^2}\left | \int_{\xi^{(1)}_{i} }^{\epsilon_{1}}\int_{\xi^{(2)}_{i}}^{ \epsilon_{2}}\frac{\sin (Ms)}{ s}\frac{\sin (Nt)}{t}dsdt\right |\\
=\sum_{i=1}^{2}h_{i}(\epsilon_{1},\epsilon_{2}) \frac{2}{\pi^2}\left | \int_{\xi^{(1)}_{i} }^{\epsilon_{1}}\int_{\xi^{(2)}_{i}}^{ \epsilon_{2}}\frac{\sin (Ms)}{ s}\frac{\sin (Nt)}{ t}dsdt\right |\\
\leq \sum_{i=1}^{2}h_{i}(\epsilon_{1},\epsilon_{2}) \frac{2}{\pi^2}\left | \int_{M\xi^{(1)}_{i} }^{M\epsilon_{1}}\frac{\sin s}{ s}ds\right|\left|\int_{N\xi^{(2)}_{i}}^{ N\epsilon_{2}}\frac{\sin t}{ t}dsdt\right|.
\end{eqnarray*}
Hence
\begin{eqnarray*}
|I_{1}| \leq (\frac{4}{\pi^2} 6^2)\varepsilon,
\end{eqnarray*}
the last step applying  Lemma \ref{le310}.
\par
Let $I_{2}$ be the second  term  of the Equation $(\ref{hu32}),$

\begin{eqnarray*}
I_{2}&&:=\int_{\epsilon_{2}}^{\infty}\int_{0}^{\epsilon_{1}}(\tilde{f}(s,t)-4\eta(x_{0},y_{0}))\frac{\sin (Ms)}{\pi s}\frac{\sin (Nt)}{\pi t}dsdt \nonumber \\
&&=\int_{\epsilon_{2}}^{\infty}\int_{0}^{\epsilon_{1}}(\tilde{f}(s,t)-\tilde{f}(a,t))\frac{\sin (Ms)}{\pi s}\frac{\sin (Nt)}{\pi t}dsdt
+\int_{\epsilon_{2}}^{\infty}\int_{0}^{\epsilon_{1}} \tilde{f}(a,t) \frac{\sin (Ms)}{\pi s}\frac{\sin (Nt)}{\pi t}dsdt \nonumber \\
&&-\int_{\epsilon_{2}}^{\infty}\int_{0}^{\epsilon_{1}}4\eta(x_{0},y_{0}))\frac{\sin (Ms)}{\pi s}\frac{\sin (Nt)}{\pi t}dsdt. \label{hu34}
\end{eqnarray*}
Since $f(s,t) \in \bf{LC}$, then due to  the Riemanm-Lebesque lemma \cite{stein1971introduction}, the first term of $ I_{2} \rightarrow 0,$ as  N $\rightarrow \infty $.\\
For the  second term of $ I_{2}$,
$$ \left |\int_{\epsilon_{2}}^{\infty}\int_{0}^{\epsilon_{1}} \tilde{f}(a,t) \frac{\sin (Ms)}{\pi s}\frac{\sin (Nt)}{\pi t}dsdt \right |\leqslant
\left | \int_{\epsilon_{2}}^{\infty}  \tilde{f}(a,t)\frac{\sin (Nt)}{\pi t}dt \right| \left | \int_{0}^{\epsilon_{1}}  \frac{\sin (Ms)}{\pi s}ds   \right|, $$
where $\tilde{f}(a,\cdot) \in L^{1}(\mathbb{R}, \mathbb{R})$ because of $  f(s,t) \in \bf{LC}$, then by the Riemanm-Lebesque lemma \cite{stein1971introduction}, the second term of $ I_{2} \rightarrow 0,$ as N $\rightarrow \infty $.

$$ \left |\int_{\epsilon_{2}}^{\infty}\int_{0}^{\epsilon_{1}}4\eta(x_{0},y_{0})\frac{\sin (Ms)}{\pi s}\frac{\sin (Nt)}{\pi t}dsdt \right |
\leqslant \left | 4\eta(x_{0},y_{0}) \int_{0}^{\epsilon_{1}}  \frac{\sin (Ms)}{\pi s} ds  \right| \left |\int_{\epsilon_{2}}^{\infty} \frac{\sin(Nt)}{\pi t}dt \right |,$$
 since $ \int_{\epsilon_{2}}^{\infty} \frac{\sin (Nt)}{\pi t}dt \rightarrow 0,$ as N $\rightarrow 0$, we have the third  term of $ I_{2} \rightarrow 0,$ as N
 $\rightarrow \infty .$
 \par
Let $I_{3}$ denote the third  term  of the Equation (\ref{hu32}).
$$I_{3}:=\int_{\epsilon_{2}}^{\infty}\int_{0}^{\epsilon_{1}}(\tilde{f}(s,t)-4\eta(x_{0},y_{0}))\frac{\sin (Ms)}{\pi s}\frac{\sin (Nt)}{\pi t}dsdt,$$
taking  similarly argument as in  $I_{2}$,  imply that $I_{3} \to 0 $, as M $\rightarrow \infty $.
\par
Let $I_{4}$ denote the fourth  term  of the Equation (\ref{hu32}).
\begin{eqnarray*}
I_{4}:=&&\int_{\epsilon_{2}}^{\infty}\int_{\epsilon_{1}}^{\infty}(\tilde{f}(s,t)-4\eta(x_{0},y_{0}))\frac{\sin (Ms)}{\pi s}\frac{\sin (Nt)}{\pi t}dsdt \nonumber \\
=&&\int_{\epsilon_{2}}^{\infty}\int_{\epsilon_{1}}^{\infty}\tilde{f}(s,t)\frac{\sin (Ms)}{\pi s}\frac{\sin (Nt)}{\pi t}dsdt-\int_{\epsilon_{2}}^{\infty}\int_{\epsilon_{1}}^{\infty}4\eta(x_{0},y_{0})\frac{\sin (Ms)}{\pi s}\frac{\sin (Nt)}{\pi t}dsdt, \label{hu36}
\end{eqnarray*}
since $\tilde{f}\in L^{1}(\mathbb{R}^2, \mathbb{R})$  and $\frac{1}{\pi t} \leq \frac{1}{\pi \epsilon_{2}},\frac{1}{\pi s} \leq \frac{1}{\pi \epsilon_{1}}$, as  $ s \in (\epsilon_{1 } , \infty),t \in (\epsilon_{2 }, \infty),$
by the Riemanm-Lebesque Lemma \cite{stein1971introduction}, we can conclude that $I_{4} \to 0 $ ,  as M $ \rightarrow \infty,$ N $\rightarrow \infty .$
We conclude the proof,
i.e..
$$I(x_{0},y_{0},N,M) - \eta(x_{0},y_{0})  \to  0, as  \quad M \to \quad \infty,\quad N \to \quad \infty. $$

\end{proof}
\begin{remark}
For separable function $f(s,t)=f_{1}(s)f_{2}(t)$ is a BVF in a rectangle-neighborhood of $(x_{0},y_{0})$ and belongs to $L^{1}(\mathbb{R}^{2}, \mathbb{R} )$, then the Equation $(\ref{h825})$ is also valid, that is to say, the  $ \mathbf{LC}$ condition is not required in same cases.
\end{remark}

\begin{defn}\label{def312}
 The function $f(s,t)=f_{0}(s,t)+\i f_{1}(s,t)+\j f_{2}(s,t)+\k f_{3}(s,t)$ is said to be \textbf{ quaternion bounded variation function $(QBVF)$ } if and only if it's components $f_{n}(s,t),n=0,1,2,3$ are all  BVFs.
\end{defn}

\begin{lemma}\label{le313}
If $f \in L^{1}(\mathbb{R}^2,\mathbb{H})$, if and only if  its components $f_{n} \in L^{1}(\mathbb{R}^2, \mathbb{R}), n=0,1,2,3.$
\end{lemma}
\begin{proof}

The module $|f(s,t)|$ of a quaternionic-valued function  $f(s,t)$ is given by
\begin{eqnarray*}
|f(s,t)|  = \sqrt{|f_0(s,t)|^2+|f_1(s,t)|^2+|f_2(s,t)|^2+|f_3(s,t)|^2}.
\end{eqnarray*}
Therefore, if $ f \in L^{1}(\mathbb{R}^2,\mathbb{H}),$
then
\begin{eqnarray*}
\int_{\mathbb{R}^2} \left|f_{n}(s,t)\right|dsdt \leq \int_{\mathbb{R}^2} |f(s,t)|dsdt < \infty,
\end{eqnarray*}
$n=0,1,2,3.$
\par
On the other hand, since
\begin{eqnarray*}
|f(s,t)|  \leq  |f_0(s,t)|+|f_1(s,t)|+|f_2(s,t)|+|f_3(s,t)|,
\end{eqnarray*}
then
\begin{eqnarray*}
\int_{\mathbb{R}^2} |f(s,t)|dsdt \leq  \sum_{n=0}^{3}\int_{\mathbb{R}^2} |f_{n}(s,t)|dsdt < \infty.
\end{eqnarray*}
\end{proof}
\begin{theorem}($\textbf{ Inversion Theorem for two-sided QFT}$)\label{the314}

Suppose in the cross-neighborhood  of $ (x_{0},y_{0})$, $f(s,t)$ is  the  QBVF  and belongs to $\bf{LC},$  and $f \in L^{1}(\mathbb{R}^2,\mathbb{H}) $,
then
\begin{eqnarray}\label{h81}
\eta(x_{0},y_{0})= \lim_{\begin{subarray}

N \to \infty \\M \to \infty
\end{subarray}
 }\frac{1}{4\pi^2}\int_{-N}^{N}\int_{-M}^{M}e^{\i ux_{0}}\mathcal{F}_{T}(u,v)e^{\j vy_{0}}dudv,
 \end{eqnarray}
where
\begin{eqnarray*}
\eta(x_{0},y_{0})&&=\frac {f(x_{0}+0,y_{0}+0)+f(x_{0}+0,y_{0}-0)+f(x_{0}-0,y_{0}+0)+f(x_{0}-0,y_{0}-0)}{ 4}\\
&&=\eta_{0}+\i\eta_{1}+\j\eta_{2}+\k\eta_{3},\\
\eta_{n}(x_{0},y_{0})&&=\frac {f_{n}(x_{0}+0,y_{0}+0)+f_{n}(x_{0}+0,y_{0}-0)+f_{n}(x_{0}-0,y_{0}+0)+f_{n}(x_{0}-0,y_{0}-0)}{ 4},\quad n=0,1,2,3,\\
\mathcal{F}_{T}(u,v)&&=\int_{\mathbb{R}^{2}}e^{-\i us}f(s,t)e^{-\j vt}dsdt.
\end{eqnarray*}
If $f(s,t)$ is continuous  at  $(x_{0},y_{0})$,
then $\eta(x_{0},y_{0})=f(x_{0},y_{0})$.

\end{theorem}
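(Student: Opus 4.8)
The plan is to reduce the statement to the real-valued Theorem \ref{th311}, applied separately to the four components $f_{0},f_{1},f_{2},f_{3}$, by exploiting the fact that the Dirichlet-type kernels generated by the two-sided inversion integral turn out to be \emph{real scalars} and hence are insensitive to the non-commutativity of $\mathbb{H}$.

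First I would note that $\mathcal{F}_{T}$ is well defined: by Lemma \ref{le313} each $f_{n}\in L^{1}(\mathbb{R}^{2},\mathbb{R})$, equivalently $|f|\in L^{1}$, so the defining integral converges absolutely. Denote by $I(x_{0},y_{0},N,M)$ the partial integral on the right-hand side of $(\ref{h81})$. Substituting the definition of $\mathcal{F}_{T}$ and interchanging the order of integration by the Fubini theorem (legitimate since $|f|\in L^{1}(\mathbb{R}^{2})$ and the rectangle in $(u,v)$ is bounded), I obtain
$$I(x_{0},y_{0},N,M)=\frac{1}{4\pi^{2}}\int_{\mathbb{R}^{2}}\left(\int_{-M}^{M}e^{\i u(x_{0}-s)}\,du\right)f(s,t)\left(\int_{-N}^{N}e^{\j v(y_{0}-t)}\,dv\right)ds\,dt,$$
the two inner integrals being pulled out onto their respective sides of $f(s,t)$ since neither depends on the other integration variable. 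The decisive observation is that $\int_{-M}^{M}e^{\i u(x_{0}-s)}du$ and $\int_{-N}^{N}e^{\j v(y_{0}-t)}dv$ equal the real scalars $\frac{2\sin(M(x_{0}-s))}{x_{0}-s}$ and $\frac{2\sin(N(y_{0}-t))}{y_{0}-t}$ (the odd imaginary parts vanish by symmetry), so they commute with everything and the expression collapses to
$$I(x_{0},y_{0},N,M)=\int_{\mathbb{R}^{2}}f(s,t)\,\frac{\sin(M(x_{0}-s))}{\pi(x_{0}-s)}\,\frac{\sin(N(y_{0}-t))}{\pi(y_{0}-t)}\,ds\,dt,$$
which is formally identical to the intermediate identity in the proof of Theorem \ref{th311}, with the sole change that $f$ is now $\mathbb{H}$-valued.

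Next I would split $f=f_{0}+\i f_{1}+\j f_{2}+\k f_{3}$; since the Dirichlet kernel above is a real scalar, the constant quaternion units factor out of the integral and $I(x_{0},y_{0},N,M)=I_{0}+\i I_{1}+\j I_{2}+\k I_{3}$, where $I_{n}$ is precisely the real partial integral of Theorem \ref{th311} associated with $f_{n}$. To invoke that theorem for $f_{n}$ I must check its hypotheses: $f_{n}\in L^{1}(\mathbb{R}^{2},\mathbb{R})$ by Lemma \ref{le313}; $f_{n}$ is a $BVF$ in a cross-neighborhood of $(x_{0},y_{0})$ because $f$ is a $QBVF$ (Definition \ref{def312}); and $f_{n}\in\mathbf{LC}$ there, since $|\tilde{f}_{n}(s,t)-\tilde{f}_{n}(a,t)|\le|\tilde{f}(s,t)-\tilde{f}(a,t)|$ and likewise for the $b$-integral, so the two $\mathbf{LC}$ estimates for $f$ descend to each component (and the admissible sets $\mathbb{A},\mathbb{B}$ only enlarge, so the same $a,b$ work). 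Theorem \ref{th311} then gives $I_{n}(x_{0},y_{0},N,M)\to\eta_{n}(x_{0},y_{0})$ as $M,N\to\infty$; summing over $n$ yields $I(x_{0},y_{0},N,M)\to\eta_{0}+\i\eta_{1}+\j\eta_{2}+\k\eta_{3}=\eta(x_{0},y_{0})$. Finally, if $f$ is continuous at $(x_{0},y_{0})$ then so is each $f_{n}$, hence $\eta_{n}=f_{n}(x_{0},y_{0})$ and $\eta(x_{0},y_{0})=f(x_{0},y_{0})$.

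I do not anticipate a genuine obstacle: the whole point is the remark that the two-sided placement of $e^{-\i us}$ and $e^{-\j vt}$ produces \emph{real} Dirichlet kernels, which makes the reduction to Theorem \ref{th311} exact. The only steps needing a little care are the justification of Fubini for a quaternion-valued integrand (handled by passing to $|f|$, or componentwise) and verifying that the $QBVF$ and $\mathbf{LC}$ hypotheses are inherited by the scalar components, both of which follow from the modulus bound $|f_{n}|\le|f|$.
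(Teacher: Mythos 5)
Your proposal is correct and follows essentially the same route as the paper: insert the definition of $\mathcal{F}_{T}$, use Fubini to obtain the real-valued product of sinc kernels sandwiching $f$, decompose $f$ into its four real components $f_{n}$, and apply Theorem \ref{th311} (together with Lemma \ref{le313}) to each component before reassembling. Your explicit verification that the $BVF$ and $\mathbf{LC}$ hypotheses descend to the components via $|\tilde{f}_{n}|\le|\tilde{f}|$ is a detail the paper leaves implicit, but the argument is the same.
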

\begin{proof}

Set
$$ \nu(x_{0},y_{0},N,M):=\frac{1}{4\pi^2}\int_{-N}^{N}\int_{-M}^{M}e^{\i ux_{0}}\mathcal{F}_{T}(u,v)e^{\j vy_{0}}dudv,$$

and rewrite this expression by inserting the definition of $\mathcal{F}_{T}(u,v)$:
\begin{eqnarray}\label{sin1}
&&\nu(x_{0},y_{0},N,M)=\frac{1}{4\pi^2}\int_{-N}^{N}\int_{-M}^{M}e^{\i ux_{0}}\int_{\mathbb{R}^{2}}e^{-\i us}f(s,t)e^{-\j vt}dsdte^{\j vy_{0}}dudv \nonumber \\
&=&\int_{\mathbb{R}^{2}}\int_{-N}^{N}\int_{-M}^{M}e^{\i u(x_{0}-s)}f(s,t)e^{\j v(y_{0}-t)}dudvdsdt \nonumber \\
&=&\int_{\mathbb{R}^{2}}\frac{\sin M(x_{0}-s)}{\pi(x_{0}-s)}f(s,t)\frac{\sin N(y_{0}-t)}{\pi(y_{0}-t)}dsdt \nonumber \\
&=&\int_{\mathbb{R}^{2}}f(x_{0}-s,y_{0}-t)\frac{\sin (Ms)}{\pi s}\frac{\sin(Nt)}{\pi t}dsdt \\
&=&\int_{\mathbb{R}^{2}}f(x_{0}-s,y_{0}-t)\frac{\sin (Ms)}{\pi s}\frac{\sin (Nt)}{\pi t}dsdt\nonumber \\
&=&\int_{\mathbb{R}^{2}}f_{0}(x_{0}-s,y_{0}-t)\frac{\sin (Ms)}{\pi s}\frac{\sin (Nt)}{\pi t}dsdt
+\i \int_{\mathbb{R}^{2}}f_{1}(x_{0}-s,y_{0}-t)\frac{\sin (Ms)}{\pi s}\frac{\sin( Nt)}{\pi t}dsdt \nonumber \\
&&+\j\int_{\mathbb{R}^{2}}f_{2}(x_{0}-s,y_{0}-t)\frac{\sin (Ms)}{\pi s}\frac{\sin( Nt)}{\pi t}dsdt
+\k \int_{\mathbb{R}^{2}}f_{3}(x_{0}-s,y_{0}-t)\frac{\sin (Ms)}{\pi s}\frac{\sin (Nt)}{\pi t}dsdt.\nonumber
\end{eqnarray}
Switching the order of integration is permitted by the Fubin Theorem in the first step.\\
Set
\begin{eqnarray*}
\nu_{n}(x_{0},y_{0},N,M):=\int_{\mathbb{R}^2}f_{n}(x_{0}-s,y_{0}-t)\frac{\sin (Ms)}{\pi s}\frac{\sin( Nt)}{\pi t}dsdt, n=0,1,2,3,
\end{eqnarray*}
then
\begin{eqnarray*}
\left|\nu(x_{0},y_{0},N,M)-\eta(x_{0},y_{0}) \right|= \sqrt{\sum_{n}^{4}|\nu_{n}(x_{0},y_{0},N,M)-\eta_{n}(x_{0},y_{0})|^2}.
\end{eqnarray*}
According to   Lemma \ref{le313} and Theorem \ref{th311}, we have
$$|\nu_{n}(x_{0},y_{0},N,M)-\eta_{n}(x_{0},y_{0})| \to 0,\quad  as \quad N \to \infty, M \to \infty, \quad n=0,1,2,3,$$
hence $|\nu(x_{0},y_{0},N,M)-\eta(x_{0},y_{0})|\to 0,$ as  N  $\rightarrow \infty,$  M $\rightarrow \infty$.

\end{proof}

Using  the similar  argument, and the fact that the Sinc function in Equation $(\ref{sin1})$ is real-valued, it commutes with quaternionic-value  function $f,$ then we obtain the inversion formulas for left-sided and right-sided QFTs, respectively.

\begin{theorem}\label{cor315}
 Suppose in the cross-neighborhood    of $(x_{0},y_{0})$, $f(s,t)$ is the QBVF and belongs to $ \bf{LC}$, and $f\in L^{1}(\mathbb{R}^2,\mathbb{H})$,
then

\begin{description}
  \item[(a)] the inversion theorem of right-sided QFT:
   \begin{eqnarray} \label{r1}
\eta(x_{0},y_{0})= \lim_{\begin{subarray}

N \to \infty \\M \to \infty
\end{subarray}
 }\frac{1}{4\pi^2}\int_{-N}^{N}\int_{-M}^{M}\mathcal{F}_{R}(u,v)e^{\j vy_{0}}e^{\i ux_{0}}dudv,
 \end{eqnarray}

  \item[(b)] the inversion theorem of left-sided QFT:
  \begin{eqnarray} \label{l1}
\eta(x_{0},y_{0})= \lim_{\begin{subarray}

N \to \infty \\M \to \infty
\end{subarray}
 }\frac{1}{4\pi^2}\int_{-N}^{N}\int_{-M}^{M}e^{\j vy_{0}}e^{\i ux_{0}}\mathcal{F}_{L}(u,v)dudv,
 \end{eqnarray}

\end{description}

where
\begin{eqnarray*}
\eta(x_{0},y_{0})&&=\frac {f(x_{0}+0,y_{0}+0)+f(x_{0}+0,y_{0}-0)+f(x_{0}-0,y_{0}+0)+f(x_{0}-0,y_{0}-0)}{ 4},\\
&&=\eta_{0}+\i\eta_{1}+\j\eta_{2}+\k\eta_{3},\\
\eta_{n}(x_{0},y_{0})&&=\frac {f_{n}(x_{0}+0,y_{0}+0)+f_{n}(x_{0}+0,y_{0}-0)+f_{n}(x_{0}-0,y_{0}+0)+f_{n}(x_{0}-0,y_{0}-0)}{ 4},\\
&&n=0,1,2,3,\\
\end{eqnarray*}
if $f(s,t)$ is continuous  at  $(x_{0},y_{0})$,
then $f(x_{0},y_{0})=\eta(x_{0},y_{0}) .$

\end{theorem}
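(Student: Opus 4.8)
The plan is to reduce both identities to the computation already carried out in the proof of Theorem~\ref{the314}; the only genuinely new point is a short bookkeeping argument showing that the non-commutativity of $\mathbb{H}$ is harmless once the inner frequency integrals are evaluated in the right order. I treat part~(a) in detail. Put
$$\nu_R(x_0,y_0,N,M):=\frac{1}{4\pi^2}\int_{-N}^{N}\int_{-M}^{M}\mathcal{F}_{R}(u,v)e^{\j vy_{0}}e^{\i ux_{0}}\,dudv,$$
insert $\mathcal{F}_{R}(u,v)=\int_{\mathbb{R}^2}f(s,t)e^{-\i us}e^{-\j vt}\,dsdt$, and interchange the order of integration, which is legitimate by Fubini's theorem since $f\in L^{1}(\mathbb{R}^2,\mathbb{H})$ means $\int_{\mathbb{R}^2}|f|<\infty$ while the remaining exponential factors have modulus one. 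This gives
$$\nu_R(x_0,y_0,N,M)=\int_{\mathbb{R}^2}f(s,t)\left(\int_{-N}^{N}\int_{-M}^{M}e^{-\i us}e^{-\j vt}e^{\j vy_{0}}e^{\i ux_{0}}\,dudv\right)dsdt.$$
Inside the brackets the two $\j$-exponentials are adjacent, so $e^{-\j vt}e^{\j vy_{0}}=e^{\j v(y_0-t)}$; integrating in $v$ \emph{first} gives the real number $\int_{-N}^{N}e^{\j v(y_0-t)}\,dv=\frac{2\sin(N(y_0-t))}{y_0-t}$, which therefore commutes with the surviving factor $e^{-\i us}e^{\i ux_0}=e^{\i u(x_0-s)}$, whose $u$-integral produces $\frac{2\sin(M(x_0-s))}{x_0-s}$. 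Consequently
$$\nu_R(x_0,y_0,N,M)=\int_{\mathbb{R}^2}f(x_0-s,y_0-t)\frac{\sin(Ms)}{\pi s}\frac{\sin(Nt)}{\pi t}\,dsdt,$$
which is exactly the expression reached in the proof of Theorem~\ref{the314} (see Equation~(\ref{sin1})). Writing $f=f_0+\i f_1+\j f_2+\k f_3$, noting by Lemma~\ref{le313} that each $f_n\in L^{1}(\mathbb{R}^2,\mathbb{R})$ and that each $f_n$ is a BVF belonging to $\mathbf{LC}$ in the cross-neighborhood of $(x_0,y_0)$, and applying the scalar inversion Theorem~\ref{th311} componentwise, we obtain $\nu_R(x_0,y_0,N,M)\to\eta(x_0,y_0)$ as $M,N\to\infty$, which is (\ref{r1}).

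Part~(b) is the mirror image. With
$$\nu_L(x_0,y_0,N,M):=\frac{1}{4\pi^2}\int_{-N}^{N}\int_{-M}^{M}e^{\j vy_{0}}e^{\i ux_{0}}\mathcal{F}_{L}(u,v)\,dudv$$
and $\mathcal{F}_{L}(u,v)=\int_{\mathbb{R}^2}e^{-\i us}e^{-\j vt}f(s,t)\,dsdt$, Fubini moves $f(s,t)$ to the far right and one must evaluate $\int_{-N}^{N}\int_{-M}^{M}e^{\j vy_{0}}e^{\i ux_{0}}e^{-\i us}e^{-\j vt}\,dudv$. Now it is the two $\i$-exponentials that are adjacent: $e^{\i ux_0}e^{-\i us}=e^{\i u(x_0-s)}$ integrates in $u$ to the real number $\frac{2\sin(M(x_0-s))}{x_0-s}$, which slides past the remaining factors and leaves $e^{\j vy_0}e^{-\j vt}=e^{\j v(y_0-t)}$, whose $v$-integral is $\frac{2\sin(N(y_0-t))}{y_0-t}$. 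Again we recover Equation~(\ref{sin1}), and the componentwise application of Theorem~\ref{th311} yields (\ref{l1}). In both parts the continuity statement is immediate: if $f$ is continuous at $(x_0,y_0)$ then all four one-sided limits equal $f(x_0,y_0)$, hence $\eta(x_0,y_0)=f(x_0,y_0)$.

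The single place where one cannot merely copy the proof of Theorem~\ref{the314} verbatim is the scalar-commutation step: one must perform the frequency integral in the variable whose two exponential factors are adjacent ($v$ for the right-sided transform, $u$ for the left-sided one), so that the resulting Dirichlet factor is a genuine real number and may be commuted through the quaternionic exponentials before the second frequency integral is carried out. This is precisely the real-valuedness of the $\mathrm{sinc}$ factors mentioned in the paragraph preceding the statement, and once it is observed there is no further obstacle.
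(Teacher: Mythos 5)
Your proposal is correct and follows exactly the route the paper intends: the paper dispatches this theorem with the one-line remark preceding it (reduce to the computation of Theorem~\ref{the314}, using that the Sinc factors arising from the inner frequency integrals are real-valued and hence commute with the quaternionic exponentials and with $f$), and you have simply supplied the details it omits, in particular the correct order of evaluation of the $u$- and $v$-integrals so that the adjacent like-unit exponentials combine first. No gaps.
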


\begin{remark}\label{re316}
We shall note that  the two-sided QFT defined above can be generalized as follows:
\begin{eqnarray}\label{genqft}
\mathcal{F}_{T}(u,v):=\int_{\mathbb{R}^2}e^{-\mu_{1}us}f(s,t)e^{-\mu_{2}vt}dsdt,
 \end{eqnarray}
or
\begin{eqnarray*}
\mathcal{F}_{T}(u,v):=\int_{\mathbb{R}^2}e^{-\mu_{1}us}f(s,t)e^{-\mu_{1}vt}dsdt,
 \end{eqnarray*}
where $\mu_{1}:= \mu_{1,1} \i +\mu_{1,2} \j +\mu_{1,3} \k$ and $\mu_2 := \mu_{2,1} \i +\mu_{2,2} \j +\mu_{2,3} \k$ so that

\begin{eqnarray}
&&\mu_{n}=\mu_{n.1}\i+\mu_{n.2}\j+\mu_{n.3}\k;  \label{hu37}\\
&&\mu_{n}^{2}=-\mu_{n.1}^{2}-\mu_{n.2}^{2}-\mu_{n.3}^{2}=-1,n=1,2,\nonumber\\
&&\mu_{1.1}\mu_{2.1}+\mu_{1.2}\mu_{2.2}+\mu_{1.3}\mu_{2.3}=0.\nonumber
\end{eqnarray}

Equation (\ref{twosidedqft}) is the special case of
(\ref{genqft}) in which $\mu_1=\i$ and $\mu_2=\j$.  The right-sided and left-sided QFTs can be also generalized  similarly as above.


Since $\int_{-N}^{N}e^{\mu_{n}x}dx=2\sin(N),N>0,n=1,2$, by the similar argument, we could find that if the  quaternionic-value function $f(s,t)$ satisfies the conditions of  Theorem \ref{the314}, then $f$ can be recovered from its  all above different  types of QFTs.

\end{remark}

\subsection{Problem  A for  QLCTs}
 In this subsection,  the inversion theorem of   two-sided QLCT  is studied.
\begin{theorem}($\textbf{ Inversion Theorem for two-sided QLCT}$ )\label{the317}\\
Suppose  in the cross-neighborhood  of $ (x_{0},y_{0})$, $f$ is  QBVF, $e^{\i\frac{a_{1}}{2b_{1}}s^{2}}f(s,t)e^{\j\frac{a_{2}}{2b_{2}}t^{2}}$belongs to $\bf{LC}$   and $f\in L^{1}(\mathbb{R}^2,\mathbb{H}) $,
then
\begin{eqnarray}\label{h82}
f(x_{0},y_{0})= \lim_{\begin{subarray}

N \to \infty \\M \to \infty
\end{subarray}
 }\frac{1}{4\pi^2}\int_{-N}^{N}\int_{-M}^{M}K_{A_{1}^{-1}}^{\i}(u,x_{0})\mathcal{L}_{T}^{\i,\j}(f)(u, v)K_{A_{2}^{-1}}^{\j}(v,y_{0})dudv,
 \end{eqnarray}

where
\begin{eqnarray*}
f(x_{0},y_{0})=\frac {f(x_{0}+0,y_{0}+0)+f(x_{0}+0,y_{0}-0)+f(x_{0}-0,y_{0}+0)+f(x_{0}-0,y_{0}-0)}{ 4}.
\end{eqnarray*}

\end{theorem}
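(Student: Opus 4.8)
The plan is to reduce the QLCT inversion statement to the already-proved two-sided QFT inversion theorem (Theorem \ref{the314}) by absorbing the chirp factors in the QLCT kernels into a modified function and then performing a change of variables that turns the remaining exponential into the QFT kernel. Concretely, write $K_{A_{1}}^{\i}(s,u)=\frac{1}{\sqrt{\i 2\pi b_{1}}}e^{\i\frac{a_{1}}{2b_{1}}s^{2}}e^{-\i\frac{1}{b_{1}}us}e^{\i\frac{d_{1}}{2b_{1}}u^{2}}$ and similarly for $K_{A_{2}}^{\j}$, and set $g(s,t):=e^{\i\frac{a_{1}}{2b_{1}}s^{2}}f(s,t)e^{\j\frac{a_{2}}{2b_{2}}t^{2}}$. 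First I would substitute these factorizations into the definition \eqref{hu26} of $\mathcal{L}_{T}^{\i,\j}(f)(u,v)$ and pull the $u$-dependent and $v$-dependent chirp prefactors $e^{\i\frac{d_{1}}{2b_{1}}u^{2}}$, $e^{\j\frac{d_{2}}{2b_{2}}v^{2}}$ (together with the constants $1/\sqrt{\i 2\pi b_1}$, $1/\sqrt{\j 2\pi b_2}$) outside the $s,t$-integral, leaving
\begin{eqnarray*}
\mathcal{L}_{T}^{\i,\j}(f)(u,v)=\frac{1}{\sqrt{\i 2\pi b_{1}}}e^{\i\frac{d_{1}}{2b_{1}}u^{2}}\left(\int_{\mathbb{R}^2}e^{-\i\frac{u}{b_{1}}s}g(s,t)e^{-\j\frac{v}{b_{2}}t}dsdt\right)\frac{1}{\sqrt{\j 2\pi b_{2}}}e^{\j\frac{d_{2}}{2b_{2}}v^{2}},
\end{eqnarray*}
i.e.\ $\mathcal{L}_{T}^{\i,\j}(f)(u,v)$ equals, up to the scalar chirps, the two-sided QFT of $g$ evaluated at frequencies $(u/b_{1}, v/b_{2})$.

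Next I would insert this into the right-hand side of \eqref{h82} and expand the inverse kernels $K_{A_{1}^{-1}}^{\i}(u,x_{0})$, $K_{A_{2}^{-1}}^{\j}(v,y_{0})$, using that $A_i^{-1}=\begin{pmatrix} d_i & -b_i\\ -c_i & a_i\end{pmatrix}$, so that the quadratic-in-$u$ term of $K_{A_1^{-1}}^{\i}$ is $e^{-\i\frac{d_1}{2b_1}u^2}$ — exactly cancelling the chirp $e^{\i\frac{d_1}{2b_1}u^2}$ coming from $\mathcal{L}_T$ — and the constant $1/\sqrt{-\i 2\pi b_1}$ cancels $1/\sqrt{\i 2\pi b_1}$ (this uses $\sqrt{-\i}=e^{-\i\pi/4}$ as fixed in the paper). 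What survives is the linear term $e^{\i\frac{1}{b_1}ux_0}$ from $K_{A_1^{-1}}^{\i}$, the quadratic-in-$x_0$ term $e^{\i\frac{a_1}{2b_1}x_0^2}$, and analogously on the $\j$ side. Then I perform the change of variables $u'=u/b_1$, $v'=v/b_2$ (with $du\,dv=b_1b_2\,du'\,dv'$), which rescales the integration limits $[-M,M]\times[-N,N]$ to $[-M/b_1,M/b_1]\times[-N/b_2,N/b_2]$ — harmless since $M,N\to\infty$ and $b_1,b_2>0$ — and turns $e^{\i\frac{1}{b_1}ux_0}$ into $e^{\i u'x_0}$. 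After collecting constants (the $b_1b_2$ Jacobian against the $\sqrt{2\pi b_1}\sqrt{2\pi b_2}$ factors, reassembled into $1/(4\pi^2)$), the right-hand side of \eqref{h82} becomes
\begin{eqnarray*}
e^{\i\frac{a_1}{2b_1}x_0^2}\left(\lim_{\substack{N\to\infty\\ M\to\infty}}\frac{1}{4\pi^2}\int_{-N'}^{N'}\int_{-M'}^{M'}e^{\i u'x_0}\mathcal{F}_T(g)(u',v')e^{\j v'y_0}du'dv'\right)e^{\j\frac{a_2}{2b_2}y_0^2}.
\end{eqnarray*}

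Now I apply Theorem \ref{the314} to $g$: by hypothesis $f$ is a QBVF, hence so is $g$ since $g$ differs from $f$ by multiplication by the bounded-variation quasi-monotone chirp factors $e^{\i a_1 s^2/(2b_1)}$, $e^{\j a_2 t^2/(2b_2)}$ (Remark \ref{re32} on products of BVFs, applied componentwise as in Definition \ref{def312}); $g\in L^1(\mathbb{R}^2,\mathbb{H})$ because $|g|=|f|$; and $g\in\mathbf{LC}$ in the cross-neighborhood of $(x_0,y_0)$ by direct assumption. Theorem \ref{the314} then gives that the bracketed limit equals $\eta_g(x_0,y_0)$, the average of the four one-sided limits of $g$ at $(x_0,y_0)$. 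Finally, since the chirp factors are continuous, $\eta_g(x_0,y_0)=e^{\i\frac{a_1}{2b_1}x_0^2}\,\eta_f(x_0,y_0)\,e^{\j\frac{a_2}{2b_2}y_0^2}$ — here one must be slightly careful that the \emph{left} chirp $e^{\i a_1 x_0^2/2b_1}$ is $\i$-valued and the \emph{right} chirp $e^{\j a_2 y_0^2/2b_2}$ is $\j$-valued, so they factor out of the one-sided limits on the correct sides and combine cleanly with the outer factors. Multiplying through, the outer $e^{\i a_1 x_0^2/2b_1}$ and $e^{\j a_2 y_0^2/2b_2}$ hit their conjugates (from $K_{A_1^{-1}}$, $K_{A_2^{-1}}$ — wait, rather they hit the inverse-kernel quadratic terms; in fact one checks the surviving $x_0$-quadratic from $K_{A_1^{-1}}^{\i}$ is $e^{\i d_1 x_0^2/2b_1}$... ) — this is the one bookkeeping point to get right: I expect the main obstacle to be precisely this tracking of the quadratic phase terms in $x_0,y_0$ and verifying they cancel to leave exactly $\eta_f(x_0,y_0)=f(x_0,y_0)$, rather than a residual chirp. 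Everything else — Fubini for the interchange, the rescaling of limits, the constant bookkeeping — is routine. If the $x_0$-phases do not cancel outright, the resolution is that $K_{A_1^{-1}}^{\i}(u,x_0)$ contains $e^{\i\frac{a_1}{2b_1}x_0^2}$ (the ``$d$-slot'' of $A_1^{-1}$ being $a_1$), which is exactly the conjugate partner needed, so the product is phase-free and the theorem follows.
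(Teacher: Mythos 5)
Your proposal is correct and follows essentially the same route as the paper: both absorb the quadratic chirps into $g(s,t)=e^{\i\frac{a_{1}}{2b_{1}}s^{2}}f(s,t)e^{\j\frac{a_{2}}{2b_{2}}t^{2}}$, observe that $g$ inherits the QBVF, $L^{1}$ and $\bf{LC}$ hypotheses, combine the forward and inverse kernels so that the $u,v$-integration produces the (rescaled) sinc kernels, and then invoke Theorem \ref{the314} before cancelling the residual phases in $x_{0},y_{0}$. The only cosmetic difference is that the paper integrates the combined kernel over $u,v$ directly to reach the sinc form, whereas you first rewrite $\mathcal{L}_{T}^{\i,\j}(f)$ as a chirp-multiplied QFT of $g$ and rescale variables; your final resolution of the phase bookkeeping (the $x_{0}$-quadratic of $K_{A_{1}^{-1}}^{\i}$ being $e^{-\i\frac{a_{1}}{2b_{1}}x_{0}^{2}}$) is the right one.
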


\begin{proof}
Set
\begin{eqnarray*}
J(x_{0},y_{0},N,M)=\int_{-N}^{N}\int_{-M}^{M}K_{A_{1}^{-1}}^{\i}(u,x_{0})\mathcal{L}_{T}^{\i,\j}(f)(u, v)K_{A_{2}^{-1}}^{\j}(v,y_{0})dudv \label{hu38}
\end{eqnarray*}

and rewrite this expression by inserting the definition of $\mathcal{L}_{T}^{\i,\j}(f)(u, v)$ in the Equation (\ref{hu26}):
\begin{eqnarray}
&&J(x_{0},y_{0},N,M)=\int_{-N}^{N}\int_{-M}^{M}K_{A_{1}^{-1}}^{\i}(u,x_{0})\left(\int_{\mathbb{R}^2} K_{A_{1}}^{\i}(s,u)f(s,t)K_{A_{2}}^{\j}(t,v)dsdt)K_{A_{2}^{-1}}^{\j}(v,y_{0}\right)dudv  \nonumber \\
=&&\int_{-N}^{N}\int_{-M}^{M}\frac{1}{4\pi^2b_{1}b_{2}}\int_{\mathbb{R}^2} e^{-\i\frac{s-x_{0}}{b_{1}}u}e^{\i\frac{a_{1}(s^2-x_{0}^2)}{2b_{1}}}f(s,t)
 e^{-\j\frac{t-y_{0}}{b_{2}}v}e^{\j\frac{a_{2}(t^2-y_{0}^2)}{2b_{2}}}dsdtdudv \nonumber \\
=&&\int_{\mathbb{R}^2}\frac{1}{\pi^2} \frac{\sin(\frac{s-x_{0}}{b_{1}}N)}{s-x_{0}}e^{\i\frac{a_{1}(s^2-x_{0}^2)}{2b_{1}}}f(s,t)\frac{\sin(\frac{t-y_{0}}{b_{2}}M)}{t-y_{0}}
e^{\j\frac{a_{2}(t^2-y_{0}^2)}{2b_{2}}}dsdt \nonumber \\
=&&\int_{\mathbb{R}^2}\frac{1}{\pi^2} \frac{\sin(\frac{s-x_{0}}{b_{1}}N)}{s-x_{0}}e^{\i\frac{a_{1}(-x_{0}^2)}{2b_{1}}}g(s,t)\frac{\sin(\frac{t-y_{0}}{b_{2}}M)}{t-y_{0}}
e^{\j\frac{a_{2}(-y_{0}^2)}{2b_{2}}}dsdt, \nonumber \label{hu39}
\end{eqnarray}
where $g(s,t)=e^{\i\frac{a_{1}}{2b_{1}}s^{2}}f(s,t)e^{\j\frac{a_{2}}{2b_{2}}t^{2}}$.

Since $e^{\i\frac{a_{1}}{2b_{1}}s^{2}},$ $e^{\j\frac{a_{2}}{2b_{2}}t^{2}}$ are the $QBVFs$ in the rectangle-neighborhood of $(x_{0},y_{0})$, then $g(s,t)$ is also  a  $QBVF$ by using  Remark \ref{re32}.
 By Theorem \ref{the314}, we have
\begin{eqnarray*}
\lim_{\begin{subarray}

N \to \infty \\M \to \infty
\end{subarray}
 }e^{\i\frac{a_{1}}{2b_{1}}x_{0}^{2}}J(x_{0},y_{0},N,M)e^{\j\frac{a_{2}}{2b_{2}}y_{0}^{2}}
&&=\lim_{\begin{subarray}

N \to \infty \\M \to \infty
\end{subarray}
 }\int_{\mathbb{R}^2}\frac{1}{\pi^2} \frac{\sin((s-x_{0})\frac{N}{b_{1}})}{s-x_{0}}g(s,t)\frac{\sin((t-y_{0})\frac{M}{b_{2}})}{t-y_{0}}dsdt\nonumber\\
 &&=g(x_{0},y_{0}).
\end{eqnarray*}
That is to say:
 \begin{eqnarray*}
\lim_{\begin{subarray}

N \to \infty \\M \to \infty
\end{subarray}
 }J(x_{0},y_{0},N,M)
&&=\lim_{\begin{subarray}

N \to \infty \\M \to \infty
\end{subarray}
 }\int_{\mathbb{R}^2}\frac{1}{\pi^2} \frac{\sin((s-x_{0})\frac{N}{b_{1}})}{s-x_{0}}g(s,t)\frac{\sin((t-y_{0})\frac{M}{b_{2}})}{t-y_{0}}dsdt\\
 &&=e^{\i\frac{-a_{1}}{2b_{1}}x_{0}^{2}}g(x_{0},y_{0})e^{-\j\frac{a_{2}}{2b_{2}}y_{0}^{2}}\\
 &&=f(x_{0},y_{0}),
\end{eqnarray*}
this complete the proof.

\end{proof}

\begin{remark}\label{re318}
\begin{enumerate}
  \item If  $f\in L^{1}(\mathbb{R}^{2},\mathbb{ H}),$ is a QBVF in the rectangle-neighborhood $(x_{0}, y_{0})$, and its four component are separable i.e., $f_{n}(s,t)=f_{n}^{(1)}(s)f_{n}^{(2)}(t), n=0,1,2,3,$
 then the inversion formulae $ (\ref{h81})$, $(\ref{r1})$, $(\ref{l1})$ and $ (\ref{h82})$ can be   holden  without the condition $ \mathbf{LC}$.
  \item The proof of Theorem  \ref{the317} only works for the  two-sided QLCT, but not for the right-sided and left-sided QLCTs.  A straightforward computation shows that
\begin{eqnarray*}
&&\int_{-N}^{N}\int_{-M}^{M}\left(\int_{\mathbb{R}^2} f(s,t)K_{A_{1}}^{\bf i}(s,u)K_{A_{2}}^{\bf j}(t,v)dsdt\right)K_{A_{2}^{-1}}^{\j}(v,y_{0})K_{A_{1}^{-1}}^{\i}(u,x_{0})dudv\\
=&& \int_{-N}^{N}\int_{-M}^{M}\left(\int_{\mathbb{R}^2} \frac{1}{4\pi^2b_{1}b_{2}}f(s,t)e^{\i(\frac{a_{1}}{2b_{1}}s^2-\frac{1}{b_{1}}su+\frac{d_{1}}{2b_{1}}u^2)}
 e^{\j(\frac{a_{2}}{2b_{2}}t^2-\frac{1}{b_{2}}tv+\frac{d_{2}}{2b_{2}}v^2)}dsdt\right)\\
 ~&& e^{\j(\frac{-d_{2}}{2b_{2}}v^2+\frac{1}{b_{2}}y_{0}v-\frac{a_{2}}{2b_{2}}y_{0}^2)}e^{\i(\frac{-d_{1}}{2b_{1}}u^2+\frac{1}{b_{1}}x_{0}u-\frac{a_{1}}{2b_{1}}x_{0}^2)}dudv\\
= &&\int_{\mathbb{R}^2}f(s,t)\int_{-N}^{N}\frac{1}{4\pi^2b_{1}b_{2}}e^{\i(\frac{a_{1}}{2b_{1}}s^2-\frac{1}{b_{1}}su+\frac{d_{1}}{2b_{1}}u^2)}du\\
&&\int_{-M}^{M}e^{\j(\frac{1}{b_{2}}(y_{0}-t))v}dve^{\j(\frac{a_{2}}{2b_{2}}(t^{2}-y_{0}^{2}))}e^{\i(\frac{-d_{1}}{2b_{1}}u^2+\frac{1}{b_{1}}x_{0}u-\frac{a_{1}}{2b_{1}}x_{0}^2)}dsdt\\
=&&\int_{\mathbb{R}^2}f(s,t)\int_{-N}^{N}\frac{1}{4\pi^2b_{1}b_{2}}e^{\i(\frac{a_{1}}{2b_{1}}s^2-\frac{1}{b_{1}}su+\frac{d_{1}}{2b_{1}}u^2)}du\\
&&\frac{2\sin(\frac{1}{b_{2}}(y_{0}-t)M)}{\frac{1}{b_{2}}(y_{0}-t)}e^{\j(\frac{a_{2}}{2b_{2}}(t^{2}-y_{0}^{2}))}e^{\i(\frac{-d_{1}}{2b_{1}}u^2+\frac{1}{b_{1}}x_{0}u-\frac{a_{1}}{2b_{1}}x_{0}^2)}dsdt.
\end{eqnarray*}
The non-commutativity  of  $e^{\j(\frac{a_{2}}{2b_{2}}(t^{2}-y_{0}^{2}))} $  and $e^{\i(\frac{-d_{1}}{2b_{1}}u^2+\frac{1}{b_{1}}x_{0}u-\frac{a_{1}}{2b_{1}}x_{0}^2)}$ in the last equation gives the reason why  this method  fails to  right-sided and left-sided QLCTs, but in the following subsection, we show that the inversion formulas of  right-sided and left-sided  QLCTs can  be holden  pointwise almost everywhere  by making  use  of  the relations  between QFTs and QLCTs.
\end{enumerate}

\end{remark}

\subsection{Problem B for QFTs}
In this subsection, we first give the  sufficient conditions to solve the inversion problems of  different types of QFTs  in  $L^{1}(\mathbb{R}^{2}, \mathbb{H})$.
Moreover,  with the relations between two-sided QFT and two-sided QLCT, two-sided QLCT and  right-sided, left-sided QLCTs, we   obtain the  inversion problems of different types of QLCTs.
We first give two technical Lemmas.
\begin{lemma}\label{l51}
Suppose $f\in L^{1}(\mathbb{R}^{2}, \mathbb{H})$,  $w$ is the Gaussian function on $ \mathbb{R}^{2}$,  i.e., for $\alpha >0$
$w(x,y)=\frac{1}{4\pi^{2}}e^{-\alpha(x^{2}+y^{2})}$,then
\begin{eqnarray*}\label{F5}
\int_{\mathbb{R}^{2}}\mathcal{F}_{T}(x,y)w(x,y)dxdy=\int_{\mathbb{R}^{2}}f(s,t)\mathcal{W}_{T}(s,t)dsdt.
\end{eqnarray*}
\end{lemma}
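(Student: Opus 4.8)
The plan is to verify this identity by the standard trick for Fourier-type transforms: write out both sides as iterated integrals, apply Fubini's theorem to interchange the order of integration, and recognize that the inner integral on one side produces the kernel that defines the companion transform $\mathcal{W}_T$ on the other side. Concretely, on the left-hand side I would substitute the definition $\mathcal{F}_T(x,y)=\int_{\mathbb{R}^2}e^{-\i xs}f(s,t)e^{-\j yt}\,dsdt$, obtaining a quadruple integral
\begin{eqnarray*}
\int_{\mathbb{R}^2}\left(\int_{\mathbb{R}^2}e^{-\i xs}f(s,t)e^{-\j yt}\,dsdt\right)w(x,y)\,dxdy.
\end{eqnarray*}
Because $w$ is real-valued it commutes with everything, so after interchanging the order of integration (justified below) the $(x,y)$-integration acts only on the exponential factors and $w$, and since $w$ is a product $w(x,y)=\frac{1}{4\pi^2}e^{-\alpha x^2}e^{-\alpha y^2}$ the double integral separates. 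One is then left with $\int_{\mathbb{R}^2}f(s,t)\,\mathcal{W}_T(s,t)\,dsdt$ provided $\mathcal{W}_T$ is defined as the two-sided QFT of $w$, i.e. $\mathcal{W}_T(s,t)=\int_{\mathbb{R}^2}e^{-\i xs}w(x,y)e^{-\j yt}\,dxdy$ — which is presumably the notational convention of the paper (swash $\mathcal{W}$ for the QFT of $w$). The key point making the quaternionic case no harder than the classical one is that the $\i$-exponential sits on the left of $f$ and the $\j$-exponential on the right, so the ordering is preserved throughout and no non-commutativity obstruction arises; this is exactly the feature that made the two-sided transform tractable in Theorem \ref{the314} and \ref{the317}.

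The first concrete step is therefore to record the definition of $\mathcal{W}_T$ and to write the left-hand side as the quadruple integral above. The second step is the Fubini interchange: I would bound the integrand in absolute value by $|f(s,t)|\,w(x,y)$ (using $|e^{-\i xs}|=|e^{-\j yt}|=1$ and $w\ge 0$), whose integral over $\mathbb{R}^4$ equals $\|f\|_1\cdot\int_{\mathbb{R}^2}w = \|f\|_1\cdot\frac{1}{4\alpha^2\pi}<\infty$ since $f\in L^1(\mathbb{R}^2,\mathbb{H})$ and $w$ is integrable; this licenses swapping the $(x,y)$ and $(s,t)$ integrations. The third step is to carry out the inner $(x,y)$-integration: pull $f(s,t)$ out (it does not depend on $x,y$), leaving $\int_{\mathbb{R}^2}e^{-\i xs}\,w(x,y)\,e^{-\j yt}\,dxdy$, which is by definition $\mathcal{W}_T(s,t)$; reassembling gives the right-hand side.

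The main obstacle — really the only nontrivial point — is making the Fubini argument fully rigorous in the quaternionic setting, since the integrand is $\mathbb{H}$-valued rather than scalar. This is handled by the componentwise decomposition already available from Lemma \ref{le313}: writing $f=f_0+\i f_1+\j f_2+\k f_3$ with each $f_n\in L^1(\mathbb{R}^2,\mathbb{R})$, one applies the classical Fubini theorem to each of the (finitely many) real scalar integrals obtained by expanding the products of exponentials and $f_n$, then recombines. Alternatively one invokes a vector-valued Fubini theorem directly, with the dominating function $|f(s,t)|w(x,y)$ as above. No delicate estimate beyond $|e^{-\i xs}|=1$ and integrability of the Gaussian is needed, so once the measurability/integrability bookkeeping is in place the identity follows immediately; I would present it in the componentwise form to keep the argument elementary and self-contained.
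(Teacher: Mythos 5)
Your proposal is correct and follows essentially the same route as the paper: justify Fubini via the bound $\|f\|_{1}\|w\|_{1}$ on the quadruple integral, separate the Gaussian $w(x,y)=\frac{1}{4\pi^{2}}e^{-\alpha x^{2}}e^{-\alpha y^{2}}$, and recognize the resulting inner integral as $\mathcal{W}_{T}(s,t)$, the two-sided QFT of $w$. The one step to make explicit is that ``pulling $f(s,t)$ out'' of $\int_{\mathbb{R}}e^{-\i xs}e^{-\alpha x^{2}}dx\; f(s,t)\int_{\mathbb{R}}e^{-\j yt}e^{-\alpha y^{2}}dy$ is legitimate only because each one-dimensional Gaussian integral is real-valued (equal to $\sqrt{\pi/\alpha}\,e^{-s^{2}/4\alpha}$, resp.\ $\sqrt{\pi/\alpha}\,e^{-t^{2}/4\alpha}$) and hence commutes with the quaternion $f(s,t)$ --- precisely the computation with which the paper opens its proof, identifying $\mathcal{W}_{T}(s,t)=\frac{1}{4\pi\alpha}e^{-(s^{2}+t^{2})/4\alpha}$ as the real Gauss--Weierstrass kernel.
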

\begin{proof}
Since
\begin{eqnarray*}
\mathcal{W}_{T}(s,t)&&=\frac{1}{4\pi^{2}}\int_{\mathbb{R}^{2}}e^{-\i sx}e^{-\alpha(x^{2}+y^{2})}e^{-\j ty}dxdy\\
&&= \frac{1}{4\pi^{2}}\int_{\mathbb{R}}e^{-\i sx}e^{-\alpha x^{2}}dx\int_{\mathbb{R}}e^{-\alpha y^{2}}e^{-\j ty}dy=\frac{1}{4\pi \alpha}e^{-\frac{s^{2}+t^{2}}{4\alpha}},
\end{eqnarray*}
then $\mathcal{W}_{T}(s,t)$ is the Gauss-Weierstrass kernels in $\mathbb{R}^{2}$.
\par
Since
\begin{eqnarray*}
\left |\int_{\mathbb{R}^{2}}\mathcal{F}_{T}(x,y)w(x,y)dxdy \right |
\leq \int_{\mathbb{R}^{2}}\int_{\mathbb{R}^{2}}\left |e^{-\i sx}f(s,t)e^{-\j ty}w(x,y)\right |dsdtdxdy
\leq  \|f\|_{1}\|w\|_{1},
\end{eqnarray*}
then $\int_{\mathbb{R}^{2}}\mathcal{F}_{T}(x,y)w(x,y)dxdy $ is well defined, and by the Fubini Theorem, we have that
\begin{eqnarray*}
\int_{\mathbb{R}^{2}}\mathcal{F}_{T}(x,y)w(x,y)dxdy
&&=\frac{1}{4\pi^{2}}\int_{\mathbb{R}^{2}}\int_{\mathbb{R}^{2}}e^{-\i sx}f(s,t)e^{-\j ty} e^{-\alpha(x^{2}+y^{2})}dsdtdxdy\\
&&=\frac{1}{4\pi^{2}}\int_{\mathbb{R}^{2}}\int_{\mathbb{R}^{2}}e^{-\i sx}f(s,t)e^{-\j ty} e^{-\alpha(x^{2}+y^{2})}dsdtdxdy\\
&&=\frac{1}{4\pi^{2}}\int_{\mathbb{R}}e^{-\alpha x^{2}}e^{-\i sx}dx\int_{\mathbb{R}^{2}}f(s,t)dsdt\int_{\mathbb{R}}e^{-\j ty} e^{-\alpha y^{2}}dy\\
&&=\frac{1}{4\pi \alpha}\int_{\mathbb{R}^{2}}f(s,t)e^{-\frac{s^{2}+t^{2}}{4a}} dsdt =\int_{\mathbb{R}^{2}}f(s,t)\mathcal{W}_{T}(s,t)dsdt.
\end{eqnarray*}
\end{proof}

From Lemma \ref{l51}, if $ f\in L^{1}(\mathbb{R}^{2}, \mathbb{H}) $, then we have

\begin{eqnarray}
&&\frac{1}{4\pi^{2}}\int_{\mathbb{R}^{2}}e^{\i sx}\mathcal{F}_{T}(x,y)e^{\j ty}e^{-\alpha(x^{2}+y^{2})} dxdy \nonumber \\
=&&\frac{1}{4\pi^{2}}\int_{\mathbb{R}^{2}}e^{\i sx}\int_{\mathbb{R}^{2}}e^{-\i xu}f(u,v)e^{-\j yv}dudve^{\j ty}e^{-\alpha(x^{2}+y^{2})}dxdy \nonumber\\
=&&\frac{1}{4\pi \alpha}\int_{\mathbb{R}^{2}}f(u,v)e^{-\frac{(u-s)^{2}+(v-t)^{2}}{4a}}dudv \nonumber\\
=&&f_{0}\ast \mathcal{W}_{T}+ f_{1}\ast \mathcal{W}_{T} \i+f_{2}\ast \mathcal{W}_{T} \j+f_{3}\ast \mathcal{W}_{T}\k.
\end{eqnarray}

\begin{lemma}\label{l52}
If $f \in L^{1}(\mathbb{R}^{2}, \mathbb{H})$, $\mathcal{W}_{T}(s,t)$ is the Gauss-Weierstrass kernel in  $\mathbb{R}^{2}$,
$\mathcal{W}_{T}(s,t)=\frac{1}{4\pi \alpha}e^{-\frac{s^{2}+t^{2}}{4\alpha}}$, then
\begin{eqnarray*}
\lim_{\begin{subarray}
\alpha \alpha \to 0^{+}
\end{subarray}} \| f\ast \mathcal{W}_{T}-f \|_{1}=0.
\end{eqnarray*}
\end{lemma}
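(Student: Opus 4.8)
The plan is to reduce the quaternion-valued statement to the classical scalar $L^{1}$ approximate-identity theorem by splitting $f$ into its four real components. First I would observe that $\mathcal{W}_{T}(s,t)=\frac{1}{4\pi\alpha}e^{-\frac{s^{2}+t^{2}}{4\alpha}}$ is a real-valued, nonnegative, radial function with $\int_{\mathbb{R}^{2}}\mathcal{W}_{T}(s,t)\,dsdt=1$ for every $\alpha>0$ (a Gaussian integral computation), so the family $\{\mathcal{W}_{T}\}_{\alpha>0}$ is an approximate identity as $\alpha\to 0^{+}$: the mass concentrates at the origin because for any $\delta>0$, $\int_{|(s,t)|>\delta}\mathcal{W}_{T}(s,t)\,dsdt\to 0$. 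Since the kernel is real-valued, convolution acts componentwise: writing $f=f_{0}+\i f_{1}+\j f_{2}+\k f_{3}$ with each $f_{n}\in L^{1}(\mathbb{R}^{2},\mathbb{R})$ by Lemma \ref{le313}, we have $f\ast\mathcal{W}_{T}=(f_{0}\ast\mathcal{W}_{T})+\i(f_{1}\ast\mathcal{W}_{T})+\j(f_{2}\ast\mathcal{W}_{T})+\k(f_{3}\ast\mathcal{W}_{T})$, and hence
\begin{eqnarray*}
\|f\ast\mathcal{W}_{T}-f\|_{1}\leq\sum_{n=0}^{3}\|f_{n}\ast\mathcal{W}_{T}-f_{n}\|_{1}.
\end{eqnarray*}
It therefore suffices to prove the convergence for each real scalar $f_{n}$.

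For the scalar case I would use the standard argument: write
\begin{eqnarray*}
(f_{n}\ast\mathcal{W}_{T})(x,y)-f_{n}(x,y)=\int_{\mathbb{R}^{2}}\big(f_{n}(x-s,y-t)-f_{n}(x,y)\big)\mathcal{W}_{T}(s,t)\,dsdt,
\end{eqnarray*}
using that $\mathcal{W}_{T}$ integrates to $1$. Taking the $L^{1}$ norm in $(x,y)$ and applying Minkowski's integral inequality (Fubini–Tonelli) gives
\begin{eqnarray*}
\|f_{n}\ast\mathcal{W}_{T}-f_{n}\|_{1}\leq\int_{\mathbb{R}^{2}}\|\tau_{(s,t)}f_{n}-f_{n}\|_{1}\,\mathcal{W}_{T}(s,t)\,dsdt,
\end{eqnarray*}
where $\tau_{(s,t)}f_{n}(x,y)=f_{n}(x-s,y-t)$ denotes translation. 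Now I would invoke the continuity of translation in $L^{1}$: the map $(s,t)\mapsto\|\tau_{(s,t)}f_{n}-f_{n}\|_{1}$ is continuous, vanishes at $(s,t)=(0,0)$, and is bounded by $2\|f_{n}\|_{1}$. Given $\varepsilon>0$, choose $\delta>0$ so that $\|\tau_{(s,t)}f_{n}-f_{n}\|_{1}<\varepsilon$ whenever $|(s,t)|<\delta$; split the integral over $|(s,t)|<\delta$ and $|(s,t)|\geq\delta$. The first piece is bounded by $\varepsilon\int_{\mathbb{R}^{2}}\mathcal{W}_{T}=\varepsilon$; the second by $2\|f_{n}\|_{1}\int_{|(s,t)|\geq\delta}\mathcal{W}_{T}(s,t)\,dsdt$, which tends to $0$ as $\alpha\to 0^{+}$ by the concentration property. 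Hence $\limsup_{\alpha\to 0^{+}}\|f_{n}\ast\mathcal{W}_{T}-f_{n}\|_{1}\leq\varepsilon$ for every $\varepsilon>0$, giving the claim for each component and therefore for $f$.

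The only genuinely non-routine ingredient is the continuity of translation in $L^{1}(\mathbb{R}^{2})$; I expect this to be the main point to justify, and it is handled in the usual way by approximating $f_{n}$ by a compactly supported continuous function $g$ (dense in $L^{1}$), for which $\|\tau_{(s,t)}g-g\|_{1}\to 0$ is immediate from uniform continuity and dominated convergence, and then using the $3\varepsilon$-argument together with the translation-invariance of the norm, $\|\tau_{(s,t)}(f_{n}-g)\|_{1}=\|f_{n}-g\|_{1}$. Everything else—the normalization of the Gaussian, the tail estimate, Minkowski's inequality, and the componentwise reduction in the quaternion setting—is routine. I would also remark that this lemma, combined with the identity displayed just before its statement expressing $\frac{1}{4\pi^{2}}\int_{\mathbb{R}^{2}}e^{\i sx}\mathcal{F}_{T}(x,y)e^{\j ty}e^{-\alpha(x^{2}+y^{2})}\,dxdy$ as a componentwise convolution of $f$ with $\mathcal{W}_{T}$, is exactly what will later let us pass to the limit $\alpha\to 0^{+}$ and recover $f$ in the $L^{1}$ sense from its two-sided QFT, thereby addressing Problem B.
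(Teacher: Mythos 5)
Your proposal is correct and follows essentially the same route as the paper: reduce to the four real components via Lemma \ref{le313} and the triangle inequality $\|f\ast\mathcal{W}_{T}-f\|_{1}\leq\sum_{n=0}^{3}\|f_{n}\ast\mathcal{W}_{T}-f_{n}\|_{1}$, then invoke the classical $L^{1}$ approximate-identity result for each real component. The only difference is that the paper simply cites Theorem 1.18 of Stein--Weiss for that scalar step, whereas you prove it in full (normalization of the Gaussian, Minkowski's integral inequality, continuity of translation in $L^{1}$, and the tail estimate), which is a correct and self-contained substitute.
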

\begin{proof}
By Theorem 1.18 in \cite{stein1971introduction}, if $ f_{n}\in L^{1}(\mathbb{R}^{2},\mathbb{R}),$ then
$$\|f_{n}\ast \mathcal{W}_{T}-f_{n} \|_{1}\rightarrow 0,\quad as \quad \alpha \rightarrow 0, n=0,1,2,3.$$

Since $f \in L^{1}(\mathbb{R}^{2}, \mathbb{H})$, using the Lemma \ref{le313}, the components $f_{n}$ of $f$ are all $ \in L^{1}(\mathbb{R}^{2}, \mathbb{R}), n=0,1,2,3,$
and
$$\| f\ast \mathcal{W}_{T}(s,t)-f\|_{1}\leq \sum_{n=0}^{3} \| f_{n}\ast \mathcal{W}_{T}(s,t)-f_{n}\|_{1},$$
then  $$\| f\ast \mathcal{W}_{T}(s,t)-f\|_{1}\rightarrow 0,\quad as \quad \alpha \rightarrow 0, $$
that is to say, the Gauss means of the integral $\frac{1}{4\pi^{2}}\int_{\mathbb{R}^{2}}e^{\i sx}\mathcal{F}_{T}(x,y)e^{\j ty} dxdy $ converge to $f(s,t)$ in the $L^{1}$ norm.

\end{proof}

We are now ready to prove one of our main results.
\begin{theorem}($\textbf{ Inversion Theorem for two-sided QFT}$)\label{T53}\\
Suppose $f$ and $\mathcal{F}_{T} \in L^{1}(\mathbb{R}^{2}, \mathbb{H})$, then
\begin{eqnarray}\label{I3}
f(s,t)=\frac{1}{4\pi^{2}}\int_{\mathbb{R}^{2}}e^{\i su}\mathcal{F}_{T}(u,v)e^{\j tv} dudv,
\end{eqnarray}
for almost everywhere $(s,t)$.
\end{theorem}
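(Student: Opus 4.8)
The plan is to use the Gauss--Weierstrass summability machinery already assembled in Lemmas \ref{l51} and \ref{l52}. First I would fix the Gaussian $w_\alpha(x,y)=\frac{1}{4\pi^2}e^{-\alpha(x^2+y^2)}$ for $\alpha>0$ and consider
\begin{eqnarray*}
g_\alpha(s,t):=\frac{1}{4\pi^{2}}\int_{\mathbb{R}^{2}}e^{\i su}\mathcal{F}_{T}(u,v)e^{\j tv}e^{-\alpha(u^{2}+v^{2})}\,dudv.
\end{eqnarray*}
By the computation following Lemma \ref{l51} (the displayed identity just after its proof), since $f\in L^{1}(\mathbb{R}^{2},\mathbb{H})$ we have $g_\alpha = f\ast\mathcal{W}_{T,\alpha}$ componentwise, where $\mathcal{W}_{T,\alpha}(s,t)=\frac{1}{4\pi\alpha}e^{-(s^{2}+t^{2})/(4\alpha)}$ is the Gauss--Weierstrass kernel. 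Lemma \ref{l52} then gives $\|g_\alpha-f\|_{1}\to 0$ as $\alpha\to 0^{+}$, so along a suitable sequence $\alpha_k\to 0^{+}$ we get $g_{\alpha_k}(s,t)\to f(s,t)$ for almost every $(s,t)$.

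Next I would handle the other side of the limit. Because $\mathcal{F}_{T}\in L^{1}(\mathbb{R}^{2},\mathbb{H})$ and $|e^{\i su}\mathcal{F}_{T}(u,v)e^{\j tv}e^{-\alpha(u^2+v^2)}|=|\mathcal{F}_{T}(u,v)|e^{-\alpha(u^2+v^2)}\le |\mathcal{F}_{T}(u,v)|$, the dominated convergence theorem applies as $\alpha\to 0^{+}$ to yield, for every $(s,t)$,
\begin{eqnarray*}
\lim_{\alpha\to 0^{+}}g_\alpha(s,t)=\frac{1}{4\pi^{2}}\int_{\mathbb{R}^{2}}e^{\i su}\mathcal{F}_{T}(u,v)e^{\j tv}\,dudv=:h(s,t).
\end{eqnarray*}
Thus the full limit $\lim_{\alpha\to 0^{+}}g_\alpha$ exists pointwise and equals $h$. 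Combining with the previous paragraph, $h(s,t)=f(s,t)$ for almost every $(s,t)$, which is exactly \eqref{I3}.

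The one point requiring care — and the main obstacle — is the bookkeeping of quaternion non-commutativity: the Gaussian factor $e^{-\alpha(u^2+v^2)}$ is real-valued and hence central, so it may be freely inserted between $e^{\i su}$, $\mathcal{F}_{T}(u,v)$, and $e^{\j tv}$, and the Fubini interchange in Lemma \ref{l51} respects the left/right placement of the exponentials; one must check that the convolution $f\ast\mathcal{W}_{T,\alpha}$ inherited from that lemma is genuinely the componentwise (scalar-kernel) convolution, so that Lemma \ref{l52} applies verbatim. Once that is verified, the argument is just the classical ``two ways of taking the Gauss limit'' trick: $L^{1}$-convergence on one side giving a.e.\ convergence along a subsequence, and dominated convergence on the other side giving the pointwise integral, with the two limits forced to agree.
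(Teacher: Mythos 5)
Your proposal is correct and follows essentially the same route as the paper's own proof: the Gauss--Weierstrass means via Lemmas \ref{l51} and \ref{l52}, $L^{1}$-convergence yielding a.e.\ convergence along a subsequence $\alpha_k\to 0^{+}$, and the dominated convergence theorem (using $\mathcal{F}_{T}\in L^{1}$) to identify the pointwise limit with the inversion integral. The non-commutativity caveat you raise is handled in the paper exactly as you suggest, by the centrality of the real Gaussian factor and the componentwise convolution identity displayed after Lemma \ref{l51}.
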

\begin{proof}
From  Lemma \ref{l52},
since  $$ \left \| \frac{1}{4\pi^{2}}\int_{\mathbb{R}^{2}}e^{\i su}\mathcal{F}_{T}(u,v)e^{\j ty}e^{-\alpha(u^{2}+v^{2})} dudv-f(s,t) \right \|_{1}\longrightarrow 0, \quad as \quad \alpha \rightarrow 0,$$
there exists a sequence $ \alpha_{k}\longrightarrow 0 $ such that $\frac{1}{4\pi^{2}}\int_{\mathbb{R}^{2}}e^{\i su}\mathcal{F}_{T}(u,v)e^{\j tv}e^{-\alpha(u^{2}+v^{2})} dudv\longrightarrow f(s,t) $ for almost everywhere $ (s,t)$.

\begin{eqnarray*}\label{F4}
f(s,t)=\lim_{\begin{subarray}
\alpha \alpha_{k} \to 0^{+}
\end{subarray}}\frac{1}{4\pi^{2}}\int_{\mathbb{R}^{2}}e^{\i su}\mathcal{F}_{T}(u,v)e^{\j tv}e^{-\alpha_{k}(u^{2}+v^{2})} dudv.
\end{eqnarray*}
Since $\mathcal{F}_{T}\in L^{1}(\mathbb{R}^{2}, \mathbb{H})$, the quaternion Lebesgue dominated convergence theorem \cite{kou2016envelope} gives us the following pointwise equality
\begin{eqnarray*}
f(s,t)=\frac{1}{4\pi^{2}}\int_{\mathbb{R}^{2}}e^{\i sx}\mathcal{F}_{T}(x,y)e^{\j ty} dsdt, a.e..
\end{eqnarray*}
\end{proof}

With the similar argument, we can obtain the inversion theorem of right-sided and left-sided QFTs.

\begin{theorem}($\textbf{ Inversion Theorem for right-sided QFT}$)\label{R1}\\
Suppose $f $ and $\mathcal{F}_{R} \in L^{1}(\mathbb{R}^{2}, \mathbb{H})$, then
\begin{eqnarray}\label{RR1}
f(s,t)=\frac{1}{4\pi^{2}}\int_{\mathbb{R}^{2}}\mathcal{F}_{R}(u,v)e^{\j tv}e^{\i su} dudv,
\end{eqnarray}
for almost everywhere $(s,t)$.
\end{theorem}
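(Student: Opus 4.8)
The plan is to mirror the proof of Theorem \ref{T53} (the two-sided QFT inversion theorem), exploiting the fact that the only substantive difference between the two-sided and right-sided transforms is the \emph{placement} of the quaternionic exponential factors, not their structure. First I would establish the right-sided analogue of Lemma \ref{l51}: for $f\in L^1(\mathbb{R}^2,\mathbb{H})$ and the Gaussian $w(x,y)=\frac{1}{4\pi^2}e^{-\alpha(x^2+y^2)}$, a Fubini argument (justified by $\left|\int_{\mathbb{R}^2}\mathcal{F}_{R}(x,y)w(x,y)\,dxdy\right|\le\|f\|_1\|w\|_1$) gives $\int_{\mathbb{R}^2}\mathcal{F}_{R}(x,y)w(x,y)\,dxdy=\int_{\mathbb{R}^2}f(s,t)\mathcal{W}_{R}(s,t)\,dsdt$, where $\mathcal{W}_{R}(s,t)=\int_{\mathbb{R}^2}w(x,y)e^{-\i sx}e^{-\j ty}\,dxdy=\frac{1}{4\pi\alpha}e^{-(s^2+t^2)/(4\alpha)}$ is again the Gauss--Weierstrass kernel. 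Here the key observation is that $w$ is \emph{real-valued}, so it commutes freely with the quaternion exponentials and the separation of the $x$- and $y$-integrals goes through exactly as before, regardless of whether the kernel sits on the left, right, or both sides.

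Next I would insert a Gaussian regularizer into the candidate inversion integral and compute, using the right-sided Lemma \ref{l51} with shifted variables, that
\begin{eqnarray*}
\frac{1}{4\pi^2}\int_{\mathbb{R}^2}\mathcal{F}_{R}(u,v)e^{\j tv}e^{\i su}e^{-\alpha(u^2+v^2)}\,dudv
=\frac{1}{4\pi\alpha}\int_{\mathbb{R}^2}f(p,q)e^{-\frac{(p-s)^2+(q-t)^2}{4\alpha}}\,dpdq
= f\ast\mathcal{W}_{R}(s,t).
\end{eqnarray*}
Since $\mathcal{W}_{R}=\mathcal{W}_{T}$ is the same real Gauss--Weierstrass kernel, Lemma \ref{l52} applies verbatim and yields $\|f\ast\mathcal{W}_{R}-f\|_1\to 0$ as $\alpha\to 0^+$. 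Then, exactly as in the proof of Theorem \ref{T53}, $L^1$ convergence gives a sequence $\alpha_k\to 0^+$ along which the regularized integrals converge to $f(s,t)$ pointwise almost everywhere, and the quaternion Lebesgue dominated convergence theorem (applicable because $\mathcal{F}_{R}\in L^1(\mathbb{R}^2,\mathbb{H})$, so $|\mathcal{F}_{R}(u,v)e^{\j tv}e^{\i su}e^{-\alpha_k(u^2+v^2)}|\le|\mathcal{F}_{R}(u,v)|$) lets us drop the regularizer and pass to the limit inside the integral, giving \eqref{RR1} for a.e.\ $(s,t)$.

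I expect the proof to be almost entirely routine precisely because the regularization method is insensitive to factor placement; the one point deserving a line of care is verifying that when the Gaussian $w$ is pulled through the nested integrals, the reordering of the quaternion factors $\mathcal{F}_{R}(u,v)$, $e^{\j tv}$, $e^{\i su}$ respects non-commutativity — but since $w$ is a real scalar and the exponentials $e^{\i su}$, $e^{\j tv}$ depend on disjoint variables so that the $u$- and $v$-integrals genuinely factor, nothing is actually commuted across. This is exactly the contrast the paper highlighted in Remark \ref{re318} for the QLCT case: there the obstruction was genuine non-commuting chirp factors, whereas here the kernel is a scalar and no obstruction arises. The left-sided case (Equation \eqref{l1}'s $L^1$-analogue, if stated) would follow by the identical argument with the exponentials relocated to the left of $\mathcal{F}_{L}$.
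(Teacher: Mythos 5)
Your proposal is correct and follows essentially the same route the paper intends: the paper gives no separate proof of Theorem \ref{R1}, stating only that it follows from Theorem \ref{T53} ``with the similar argument,'' and your write-up is precisely that argument (Gaussian regularization, Lemma \ref{l52}, subsequence a.e.\ convergence, dominated convergence), including the correct justification that the $v$-integral against the real Gaussian collapses to a real scalar so no non-commutativity obstruction arises.
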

\begin{theorem}($\textbf{ Inversion Theorem for left-sided QFT}$)\label{L1}\\
Suppose $f$  and $\mathcal{F}_{L} \in L^{1}(\mathbb{R}^{2}, \mathbb{H})$, then
\begin{eqnarray}\label{LL1}
f(s,t)=\frac{1}{4\pi^{2}}\int_{\mathbb{R}^{2}}e^{\j tv}e^{\i su}\mathcal{F}_{L}(u,v) dudv,
\end{eqnarray}
for almost everywhere $(s,t)$.
\end{theorem}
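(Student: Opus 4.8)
The plan is to repeat, essentially verbatim, the Gauss--Weierstrass summability argument used for the two-sided transform in Theorem \ref{T53}, the only genuinely new point being the bookkeeping forced by the different placement of the exponential kernels in $\mathcal{F}_{L}$ and in the claimed inversion formula. Accordingly, I would first record the left-sided analogue of Lemma \ref{l51}: for the Gaussian $w(x,y)=\frac{1}{4\pi^{2}}e^{-\alpha(x^{2}+y^{2})}$ one has $\int_{\mathbb{R}^{2}}\mathcal{F}_{L}(x,y)w(x,y)\,dx\,dy=\int_{\mathbb{R}^{2}}\mathcal{W}_{L}(s,t)f(s,t)\,ds\,dt$, where $\mathcal{W}_{L}(s,t)=\int_{\mathbb{R}^{2}}e^{-\i xs}e^{-\j yt}w(x,y)\,dx\,dy$. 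Because $w$ is scalar-valued and separable, the inner one-dimensional integrals $\int_{\mathbb{R}}e^{-\i xs}e^{-\alpha x^{2}}\,dx$ and $\int_{\mathbb{R}}e^{-\j yt}e^{-\alpha y^{2}}\,dy$ are real and even, so $\mathcal{W}_{L}$ is again the scalar Gauss--Weierstrass kernel $\frac{1}{4\pi\alpha}e^{-(s^{2}+t^{2})/4\alpha}$, identical to $\mathcal{W}_{T}$; the interchange of integrals is justified by Fubini since $\|f\|_{1}\|w\|_{1}<\infty$.

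Next I would form the regularized Gauss mean $\frac{1}{4\pi^{2}}\int_{\mathbb{R}^{2}}e^{\j tv}e^{\i su}\mathcal{F}_{L}(u,v)e^{-\alpha(u^{2}+v^{2})}\,du\,dv$, substitute the definition $\mathcal{F}_{L}(u,v)=\int_{\mathbb{R}^{2}}e^{-\i us'}e^{-\j vt'}f(s',t')\,ds'\,dt'$, and apply Fubini (again licit, since the modulus of the full integrand is dominated by $\frac{1}{4\pi^{2}}|f(s',t')|e^{-\alpha(u^{2}+v^{2})}$, which is integrable). Carrying out the $u$-integral first, $e^{\i su}e^{-\i us'}=e^{\i u(s-s')}$ collapses with the scalar factor $e^{-\alpha u^{2}}$ into the real even Gaussian $\sqrt{\pi/\alpha}\,e^{-(s-s')^{2}/4\alpha}$, which then commutes with everything in sight; doing the $v$-integral likewise turns $e^{\j tv}e^{-\j vt'}e^{-\alpha v^{2}}$ into the real Gaussian $\sqrt{\pi/\alpha}\,e^{-(t-t')^{2}/4\alpha}$. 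This is precisely the place where non-commutativity might be feared to intervene, and it is the step I expect to be the main obstacle to state cleanly: but it does not actually bite, because the $\i$- and $\j$-exponentials are always separated by the real (hence central) regularizing Gaussians, so they never have to be moved past one another. The upshot is the clean identity
\begin{eqnarray*}
\frac{1}{4\pi^{2}}\int_{\mathbb{R}^{2}}e^{\j tv}e^{\i su}\mathcal{F}_{L}(u,v)e^{-\alpha(u^{2}+v^{2})}\,du\,dv=(f\ast\mathcal{W}_{T})(s,t),
\end{eqnarray*}
the convolution being against the same scalar Gauss--Weierstrass kernel as in the two-sided case.

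Finally I would invoke Lemma \ref{l52}, which gives $\|f\ast\mathcal{W}_{T}-f\|_{1}\to 0$ as $\alpha\to 0^{+}$, so that along some sequence $\alpha_{k}\downarrow 0$ the left-hand side above converges to $f(s,t)$ for almost every $(s,t)$. On the other hand, since $\mathcal{F}_{L}\in L^{1}(\mathbb{R}^{2},\mathbb{H})$ and $|e^{\j tv}e^{\i su}\mathcal{F}_{L}(u,v)e^{-\alpha_{k}(u^{2}+v^{2})}|\le|\mathcal{F}_{L}(u,v)|$ with $e^{-\alpha_{k}(u^{2}+v^{2})}\to 1$ pointwise, the quaternion Lebesgue dominated convergence theorem \cite{kou2016envelope} lets us pass to the limit inside the integral, so the same sequence also converges to $\frac{1}{4\pi^{2}}\int_{\mathbb{R}^{2}}e^{\j tv}e^{\i su}\mathcal{F}_{L}(u,v)\,du\,dv$. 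Equating the two limits yields (\ref{LL1}) almost everywhere. Theorem \ref{R1} follows by the identical computation with the order of the two exponentials reversed throughout, using $\mathcal{F}_{R}$ in place of $\mathcal{F}_{L}$; one can also, if desired, deduce both from Theorem \ref{T53} by conjugating $f$ or using the relation between the sided transforms, but the direct summability argument is cleaner since the kernel remains the central Gauss--Weierstrass kernel in every case.
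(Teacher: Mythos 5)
Your proposal is correct and follows exactly the route the paper intends: the paper proves Theorem \ref{T53} via the Gauss--Weierstrass multiplication formula (Lemma \ref{l51}), the $L^{1}$ approximation Lemma \ref{l52}, an a.e.\ convergent subsequence, and quaternion dominated convergence, and then simply asserts that the left-sided case follows ``with the similar argument.'' You have supplied the details of that similar argument, correctly observing that the only point needing care --- non-commutativity --- is harmless because the regularizing Gaussian factors are real-valued and central.
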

In what follows, another sufficient conditions for the inversion formulas of  QFTs  hold pointwisly in quaternion field is below.
\begin{coro}\label{T55}
Suppose $f\in L^{1}(\mathbb{R}^2,\mathbb{H}),$  then $ f$ can be restructured by its two-sided QFT function  as in
Equation $(\ref{I3})$  if one of the following conditions hold.
\begin{description}
\item[$(I).$] $\mathcal{F}_{T,n} \in L^{1}(\mathbb{R}^2,\mathbb{H}).$
  \item[$(II).$] $F_{n}\in L^{1}(\mathbb{R}^2, \mathbb{C}).$
  \item[$(III).$] $f$ is continuous at $(0,0)$, $ F_{n}\geq 0 .$
\end{description}
where
$ \mathcal{F}_{T,n} $ and  $F_{n} $ are the two-sided QFT and the 2D FT of $f_{n}, n=0,1,2,3$, respectively, which are the components of the $f$. 2D FT  is defined in Equation $(\ref{e9}).$

\end{coro}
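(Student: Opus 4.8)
The plan is to reduce each of the three conditions $(I)$, $(II)$, $(III)$ to the already-established pointwise inversion theorem for the two-sided QFT, namely Theorem \ref{T53}, by showing that in every case the hypothesis forces $\mathcal{F}_{T} \in L^{1}(\mathbb{R}^2,\mathbb{H})$ (or else applies Theorem \ref{T53} componentwise). Throughout I would use the decomposition $f = f_0 + \i f_1 + \j f_2 + \k f_3$ into real components and Lemma \ref{le313}, which gives the equivalence $f \in L^1(\mathbb{R}^2,\mathbb{H}) \iff f_n \in L^1(\mathbb{R}^2,\mathbb{R})$ for $n=0,1,2,3$, together with the analogous splitting of the two-sided QFT into the $\mathcal{F}_{T,n}$. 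The central observation is that the two-sided QFT is additive in the argument, so $\mathcal{F}_T(f) = \sum_n \mathcal{F}_T(f_n)$ with appropriate placement of the basis units $\i,\j,\k$, and since $|\mathcal{F}_T(f)| \leq \sum_n |\mathcal{F}_{T,n}|$, integrability of all the components' transforms yields integrability of $\mathcal{F}_T$.

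For condition $(I)$: if each $\mathcal{F}_{T,n} \in L^1$, then $\mathcal{F}_T = \mathcal{F}_T(f) \in L^1(\mathbb{R}^2,\mathbb{H})$ by the triangle-inequality estimate above; combined with $f \in L^1(\mathbb{R}^2,\mathbb{H})$ this is exactly the hypothesis of Theorem \ref{T53}, so Equation \eqref{I3} holds a.e. For condition $(II)$: here I would first relate the two-sided QFT $\mathcal{F}_{T,n}$ of the real-valued $f_n$ to its ordinary 2D Fourier transform $F_n$ defined in \eqref{e9}. Since $f_n$ is real-valued, the kernels $e^{-\i us}$ and $e^{-\j vt}$ act on a scalar, and a direct computation (splitting $e^{-\i us} = \cos us - \i \sin us$, similarly for $\j$) expresses $\mathcal{F}_{T,n}$ in terms of the four real ``Fourier-cosine/sine'' integrals of $f_n$; these are precisely the real and imaginary parts of $F_n$. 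Hence $|\mathcal{F}_{T,n}|$ is controlled by $|F_n|$ (up to a fixed constant), so $F_n \in L^1(\mathbb{R}^2,\mathbb{C})$ for all $n$ implies $\mathcal{F}_{T,n} \in L^1$, which reduces $(II)$ to $(I)$.

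For condition $(III)$: with $f$ continuous at $(0,0)$ and each $F_n \geq 0$, I would invoke the classical 2D Fourier inversion argument at the single point $(0,0)$ — or more precisely, apply Theorem \ref{th311} type reasoning together with the Gauss–Weierstrass approximation of Lemma \ref{l52} restricted to $f_n$: the Gauss means $\frac{1}{4\pi^2}\int F_n(u,v) e^{-\alpha(u^2+v^2)}\,dudv$ converge to $f_n(0,0)$ as $\alpha \to 0^+$ by continuity, while by the monotone convergence theorem (using $F_n \geq 0$) they also converge to $\frac{1}{4\pi^2}\int_{\mathbb{R}^2} F_n(u,v)\,dudv$; equating the two limits shows $F_n \in L^1(\mathbb{R}^2,\mathbb{C})$, and then $(III)$ reduces to $(II)$. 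I expect the main obstacle to be the bookkeeping in condition $(II)$ — correctly tracking how the left-placed kernel $e^{-\i us}$ and right-placed kernel $e^{-\j vt}$ interact with the scalar $f_n$ and with each other (noncommutativity of $\i$ and $\j$), and verifying that the resulting four real integrals are genuinely the components of $F_n$ rather than some mixed combination; once that identification is pinned down, the norm comparison $|\mathcal{F}_{T,n}| \leq C|F_n|$ and hence the chain of reductions $(III)\Rightarrow(II)\Rightarrow(I)\Rightarrow$ Theorem \ref{T53} is routine.
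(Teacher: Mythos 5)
Your proposal is correct and follows essentially the same route as the paper: the chain $(III)\Rightarrow(II)\Rightarrow(I)\Rightarrow$ Theorem~\ref{T53}, where the paper simply cites the identity $\mathcal{F}_{T,n}(u,v)=\tfrac{1}{2}\bigl(F_{n}(u,v)(1-\k)+F_{n}(u,-v)(1+\k)\bigr)$ from \cite{pei2001efficient} for step $(II)$ and the Stein--Weiss result for step $(III)$, both of which you rederive by hand. The only imprecision is your pointwise bound $|\mathcal{F}_{T,n}|\leq C|F_{n}|$ (and similarly $|\mathcal{F}_{T}|\leq\sum_{n}|\mathcal{F}_{T,n}|$ in step $(I)$): the trigonometric expansion actually yields a combination of $F_{n}(u,v)$ and the reflected value $F_{n}(u,-v)$ (respectively $\mathcal{F}_{T,n}$ at sign-flipped arguments, since $e^{-\i us}\j=\j e^{\i us}$), so the correct estimate is $|\mathcal{F}_{T,n}(u,v)|\leq C\bigl(|F_{n}(u,v)|+|F_{n}(u,-v)|\bigr)$, which still gives the desired $L^{1}$ equivalence because reflections preserve the $L^{1}$ norm.
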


\begin{proof}
For conditions $(I),$   since $$ \| \mathcal{F}_{T}\|_{1} \leq \sum_{0}^{3}\| \mathcal{F}_{T,n}\|_{1} < \infty ,$$
therefore, from  Theorem \ref{T53}, Equation $(\ref{I3})$ holds .
\par
For conditions $(II),$
The relationship between two-sided QFT $ \mathcal{H}_{T} $ and 2D FT $ H $ of  a real integrable  function $h$ is given as follows:
\begin{eqnarray}\label{F1}
\mathcal{H}_{T}(u,v)=\frac{ H(u,v)(1-\k)+H(u,-v)(1+\k) }{2},
\end{eqnarray}
\begin{eqnarray}\label{F2}
H(u,v)=\frac{ \mathcal{H}_{T}(u,v)(1+\k)+\mathcal{H}_{T}(u,-v)(1-\k) }{2},
\end{eqnarray}
 Equation (\ref{F1}) was given in \cite{pei2001efficient}, while  Equation (\ref{F2}) also can be proved by similar argument, we omit it.
By Equations (\ref{F1}) and (\ref{F2}),  $\mathcal{F}_{T,n} \in L^{1}(\mathbb{R}^2,\mathbb{H}) $ if and only if  $F_{n} \in L^{1}(\mathbb{R}^2,\mathbb{C}). $ Therefore statement $(I)$ and $ (II)$ are equivalent.
\par
For statement $(III),$
since $f$ is continuous at $(0,0)$ and $ F_{n}\geq 0 $, then $F_{n}\in L^{1}(\mathbb{R}^2, \mathbb{C})$ due to  \cite{stein1971introduction}, hence, from the statement $(II) ,$ Equation $(\ref{I3})$ holds .
\end{proof}

\begin{remark}\label{rem319}
\begin{enumerate}
  \item We can replace the two-sided QFT by right-sided, or left-sided QFTs in the Equations $(\ref{F1})$ and $(\ref{F2})$.
  \item It is easy to show that The  Corollary $\ref{T55}$    not only  work for two-sided QFT, but also work for the right-sided and left-sided QFTs and the generalized QFTs in Remark $\ref{re316}$.
\end{enumerate}

\end{remark}
Before giving  other sufficient conditions of inversion theorem for QFTs, we introduce the following concept \cite{stein1971introduction}:
\begin{defn}
 If $f\in L^{1}(\mathbb{R}^{2}, \mathbb{H})$ is differentiable  in the $L^{1}$ norm with respect to $s$
and there exists a function $ g \in L^{1}(\mathbb{R}^{2}, \mathbb{H})$ such that
$$\lim_{\begin{subarray}
\\h \to 0
\end{subarray}} \int_{\mathbb{R}^{2}} \left | \frac{f(s+h,t)-f(s,t)}{h} -g(s,t) \right | dsdt=0,$$
then the function $g$ is said to be \textbf{ the partial derivative of $f$ with respect to $s$ in the $L^{1}$ norm}.
\end{defn}
\begin{lemma}\label{h91}
If $f\in L^{1}(\mathbb{R}^{2}, \mathbb{H})$, and $g(s,t)$ is the partial derivative of $f(s,t)$ with respect to $s$ in the $L^{1}$ norm then
\begin{eqnarray*}
\mathcal{G}_{T}(u,v)=\i u\mathcal{F}_{T}(u,v).
\end{eqnarray*}
\end{lemma}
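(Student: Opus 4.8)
The plan is to establish the differentiation formula $\mathcal{G}_T(u,v) = \i u\, \mathcal{F}_T(u,v)$ by exploiting that $g$ is the $L^1$-limit of difference quotients of $f$ in the $s$-variable, and that the two-sided QFT is continuous (indeed $1$-Lipschitz in norm) from $L^1(\mathbb{R}^2,\mathbb{H})$ into the bounded quaternionic functions. First I would observe that since $g \in L^1(\mathbb{R}^2,\mathbb{H})$, its QFT $\mathcal{G}_T$ is well defined and bounded. Next I would introduce the difference quotient $D_h f(s,t) := \frac{f(s+h,t)-f(s,t)}{h}$, which lies in $L^1(\mathbb{R}^2,\mathbb{H})$ for each $h\neq 0$, and compute its QFT directly.

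The key computation is a translation-in-$s$ identity for the two-sided QFT. Writing $\tau_h f(s,t) := f(s+h,t)$, a change of variables $s \mapsto s-h$ in the defining integral $\mathcal{F}_T(u,v) = \int_{\mathbb{R}^2} e^{-\i us} f(s,t) e^{-\j vt}\,ds\,dt$ gives
\begin{eqnarray*}
\mathcal{F}_T(\tau_h f)(u,v) = \int_{\mathbb{R}^2} e^{-\i u(s-h)} f(s,t) e^{-\j vt}\,ds\,dt = e^{\i uh}\,\mathcal{F}_T(u,v),
\end{eqnarray*}
where the scalar $e^{\i uh}$ is pulled out on the \emph{left} because it sits to the left of $f$; this is exactly why the formula is clean for the two-sided transform. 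By linearity of the QFT in the left and right arguments, it follows that
\begin{eqnarray*}
\mathcal{F}_T(D_h f)(u,v) = \frac{e^{\i uh}-1}{h}\,\mathcal{F}_T(u,v).
\end{eqnarray*}

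Then I would pass to the limit $h\to 0$ on both sides. On the right, $\frac{e^{\i uh}-1}{h} \to \i u$ pointwise in $(u,v)$, and $\mathcal{F}_T(u,v)$ is a fixed bounded function, so the right side converges pointwise to $\i u\,\mathcal{F}_T(u,v)$. On the left, since $\|D_h f - g\|_1 \to 0$ by hypothesis, the Lipschitz bound $\|\mathcal{F}_T(\varphi)\|_\infty \le \|\varphi\|_1$ (immediate from $|e^{-\i us}| = |e^{-\j vt}| = 1$ and the triangle inequality for quaternion-valued integrals) yields $\|\mathcal{F}_T(D_h f) - \mathcal{G}_T\|_\infty \le \|D_h f - g\|_1 \to 0$, so $\mathcal{F}_T(D_h f)(u,v) \to \mathcal{G}_T(u,v)$ uniformly, hence pointwise. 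Equating the two limits gives $\mathcal{G}_T(u,v) = \i u\,\mathcal{F}_T(u,v)$ for every $(u,v)$, completing the proof.

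The only mild subtlety — really the one point that needs care rather than a genuine obstacle — is the non-commutativity of $\mathbb{H}$: one must keep the scalar factors $e^{\i uh}$ on the correct side of $f$ throughout, and check that the interchange of limit and integral on the left is justified via the $L^1$-convergence bound rather than any naive dominated-convergence argument in $(s,t)$. Since $u$ and $e^{\i uh}$ both commute with $\i$ (they are real and complex-in-$\i$ respectively), no ambiguity arises in the final identity. For the right-sided and left-sided transforms the analogous statement would instead produce a factor on the opposite side, but that is not asked here.
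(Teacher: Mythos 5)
Your proposal is correct and follows essentially the same route as the paper: compute the two-sided QFT of the difference quotient via the translation identity (with the factor $e^{\i uh}$ emerging on the left), bound $\bigl|\mathcal{F}_T(D_h f)(u,v)-\mathcal{G}_T(u,v)\bigr|$ by $\|D_h f-g\|_1$, and let $h\to 0$. Your write-up is in fact somewhat more careful than the paper's (which contains a small typographical slip in stating the QFT of $D_h f - g$), but the argument is identical in substance.
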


\begin{proof}
Since  $g(s,t)$ is the partial derivative of $f$ with respect to $s$ in the $L^{1}$ norm, then
$$\lim_{\begin{subarray}
\\h \to 0
\end{subarray}} \int_{\mathbb{R}^{2}} \left | \frac{f(s+h,t)-f(s,t)}{h} -g(s,t) \right | dsdt=0,$$
it's easy to see that the two-sided QFT of $ \frac{f(s+h,t)-f(s,t)}{h} -g(s,t)$ is $  \frac{e^{\i uh}\mathcal{F}_{T}(u,v)-\mathcal{G}_{T}(u,v)}{h} $, and
$$
\left |\frac{e^{\i uh}\mathcal{F}_{T}(u,v)-\mathcal{F}_{T}(u,v)}{h}-\mathcal{G}_{T}(u,v) \right |  \leq \int_{\mathbb{R}^{2}} \left | \frac{f(s+h,t)-f(s,t)}{h} -g(s,t) \right| dsdt,
$$
then letting $ h \to 0,$ we obatin:

\begin{eqnarray*}
\mathcal{G}_{T}(u,v)=\i u\mathcal{F}_{T}(u,v).
\end{eqnarray*}
\end{proof}

The Lemma \ref{h91} can be extended to higher derivatives by induction  as follows:
\begin{theorem}\label{h92}
If $f \in L^{1}(\mathbb{R}^{2}, \mathbb{H})$ has derivatives in the $ L^{1}(\mathbb{R}^{2}, \mathbb{H})$ norm of all orders $ \leq m+n$, then
\begin{eqnarray} \label{p92}
\mathcal{F}_{T}\left(\frac{\partial ^{m+n}}{\partial s^{m}\partial t^{n}}f\right ) (u,v)=(\i u)^{m}\mathcal{F}_{T}(u,v)(\j v)^{n},
\end{eqnarray}
where $\mathcal{F}_{T}(\frac{\partial ^{m+n}}{\partial s^{m}\partial t^{n}}f) (u,v)$ is the two-sided QFT of $\frac{\partial ^{m+n}}{\partial s^{m}\partial t^{n}}f(s,t). $
\end{theorem}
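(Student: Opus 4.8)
The plan is to prove (\ref{p92}) by induction on the total order $m+n$, with Lemma \ref{h91} supplying the step that strips off one $s$-derivative and an exact mirror image of it supplying the step that strips off one $t$-derivative. The structural point that makes this work is that the kernel of the two-sided QFT carries $e^{-\i us}$ to the \emph{left} of $f$ and $e^{-\j vt}$ to the \emph{right} of $f$: consequently an $s$-differentiation always prepends a factor $\i u$ on the left of $\mathcal{F}_{T}(u,v)$, a $t$-differentiation always appends a factor $\j v$ on the right, and the two families of factors are permanently segregated, so the non-commutativity of $\mathbb{H}$ never forces a commutator. The first non-trivial cases $(m,n)=(1,0)$ and $(m,n)=(0,1)$ are precisely Lemma \ref{h91} and its mirror.

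First I would record the mirror of Lemma \ref{h91}: if $f\in L^{1}(\mathbb{R}^{2},\mathbb{H})$ and $g\in L^{1}(\mathbb{R}^{2},\mathbb{H})$ is the partial derivative of $f$ with respect to $t$ in the $L^{1}$ norm, then $\mathcal{G}_{T}(u,v)=\mathcal{F}_{T}(u,v)\,\j v$. The argument is verbatim that of Lemma \ref{h91}: the substitution $t\mapsto t+k$ shows that the two-sided QFT of $\frac{f(s,t+k)-f(s,t)}{k}-g(s,t)$ equals $\mathcal{F}_{T}(u,v)\frac{e^{\j vk}-1}{k}-\mathcal{G}_{T}(u,v)$ — the factor is pulled through on the right because $e^{-\j vt}$ sits to the right of $f$ — its modulus is dominated by $\int_{\mathbb{R}^{2}}\bigl|\frac{f(s,t+k)-f(s,t)}{k}-g(s,t)\bigr|\,dsdt$, and letting $k\to 0$ gives the claim since $\frac{e^{\j vk}-1}{k}\to\j v$. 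The base case $m+n=0$ of the induction is trivial.

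For the inductive step, fix $(m,n)$ with $m+n\geq 1$ and assume (\ref{p92}) holds, together with its hypothesis, for every pair of strictly smaller total order. Let $f$ have $L^{1}$-derivatives of all orders $\leq m+n$, and suppose first $m\geq 1$. Put $g:=\frac{\partial}{\partial s}f$, which exists in the $L^{1}$ norm and lies in $L^{1}(\mathbb{R}^{2},\mathbb{H})$; moreover $g$ has $L^{1}$-derivatives of all orders $\leq m+n-1$, and $\frac{\partial^{(m-1)+n}}{\partial s^{m-1}\partial t^{n}}g=\frac{\partial^{m+n}}{\partial s^{m}\partial t^{n}}f$ by commutativity of the $L^{1}$-partial derivatives. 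Applying the induction hypothesis to $g$ and then Lemma \ref{h91} (to get $\mathcal{G}_{T}(u,v)=\i u\,\mathcal{F}_{T}(u,v)$) yields
\begin{eqnarray*}
\mathcal{F}_{T}\!\left(\frac{\partial^{m+n}}{\partial s^{m}\partial t^{n}}f\right)(u,v)=(\i u)^{m-1}\,\mathcal{G}_{T}(u,v)\,(\j v)^{n}=(\i u)^{m-1}\bigl(\i u\,\mathcal{F}_{T}(u,v)\bigr)(\j v)^{n}=(\i u)^{m}\mathcal{F}_{T}(u,v)(\j v)^{n}.
\end{eqnarray*}
If instead $n\geq 1$, one strips off a $t$-derivative: with $g:=\frac{\partial}{\partial t}f$ the mirror lemma gives $\mathcal{G}_{T}(u,v)=\mathcal{F}_{T}(u,v)\,\j v$, the new $\j v$ attaches on the right of $(\j v)^{n-1}$, and the identical computation produces $(\j v)^{n}$. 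Since every $(m,n)$ is reached from $(0,0)$ by such unit steps, (\ref{p92}) follows.

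I do not anticipate a genuine obstacle. The only points needing care are administrative: proving the mirror lemma (immediate, as above), and justifying that peeling one derivative and then applying the induction hypothesis reconstitutes the full mixed partial — this rests on the commutativity of the $L^{1}$-partial derivatives and is guaranteed once every intermediate $\frac{\partial^{i+j}}{\partial s^{i}\partial t^{j}}f$ with $i\leq m,\ j\leq n$ is known to lie in $L^{1}(\mathbb{R}^{2},\mathbb{H})$ and to admit the next $L^{1}$-derivative, which is exactly what the hypothesis ``derivatives in the $L^{1}$ norm of all orders $\leq m+n$'' provides. The segregation of the $\i u$'s to the left and the $\j v$'s to the right of $\mathcal{F}_{T}(u,v)$ means the non-commutativity of $\mathbb{H}$ plays no role at all.
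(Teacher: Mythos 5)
Your proof is correct and takes exactly the route the paper intends: the paper offers no written proof of Theorem \ref{h92}, saying only that Lemma \ref{h91} ``can be extended to higher derivatives by induction,'' and your argument is precisely that induction carried out in full. The mirror lemma for the $t$-derivative (with the factor $\j v$ emerging on the right because the kernel $e^{-\j vt}$ sits to the right of $f$) and the observation that the $\i u$ factors accumulate on the left while the $\j v$ factors accumulate on the right, so quaternionic non-commutativity never intervenes, are the correct and complete fleshing-out of the paper's one-line assertion.
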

By the non - commutativity of quaternions, we obtain the following results for left-sided and right-sided QFTs.

\begin{theorem}
If $f(s,t) \in L^{1}(\mathbb{R}^{2}, \mathbb{H})$ has derivatives  with respect to $s$ in the $ L^{1}(\mathbb{R}^{2}, \mathbb{H})$ norm of all orders $ \leq m$, then
\begin{eqnarray*} \label{p93}
\mathcal{F}_{L}\left(\frac{\partial ^{m}}{\partial s^{m}}f\right) (u,v)=(\i u)^{m}\mathcal{F}_{L}(u,v),
\end{eqnarray*}
where $\mathcal{F}_{L}(\frac{\partial ^{m}}{\partial s^{m}}f) (u,v)$ is the left-sided QFT of $\frac{\partial ^{m}}{\partial s^{m}}f(s,t) .$
\end{theorem}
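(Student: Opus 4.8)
The plan is to prove the identity by induction on $m$, using the already-established Lemma \ref{h91} as a template for the base case $m=1$ and the recursive structure of $L^{1}$-differentiation for the inductive step. The only genuinely quaternionic issue is where the factors $\i u$ land: since we differentiate only in the variable $s$, and in $\mathcal{F}_{L}$ the exponential $e^{-\i us}$ sits to the \emph{left} of $f$, the factors $\i u$ will accumulate on the left and no $\j v$ factor can appear --- in contrast with Theorem \ref{h92}.

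For the base case, let $g$ be the partial derivative of $f$ with respect to $s$ in the $L^{1}$ norm. Substituting $s\mapsto s-h$ in the defining integral of the left-sided QFT of the translate $f(\cdot+h,\cdot)$ and using that $\i uh$ and $-\i us$ are both real multiples of $\i$, hence commute, one finds $e^{-\i u(s-h)}=e^{\i uh}e^{-\i us}$, so that the constant $e^{\i uh}$ can be pulled out to the far left of the integral; thus the left-sided QFT of $f(\cdot+h,\cdot)$ equals $e^{\i uh}\mathcal{F}_{L}(u,v)$. Consequently the left-sided QFT of $\frac{f(s+h,t)-f(s,t)}{h}-g(s,t)$ is $\frac{e^{\i uh}-1}{h}\mathcal{F}_{L}(u,v)-\mathcal{G}_{L}(u,v)$, and since $|e^{-\i us}|=|e^{-\j vt}|=1$ gives $|\mathcal{F}_{L}(\phi)(u,v)|\le\|\phi\|_{1}$ for every $\phi\in L^{1}(\mathbb{R}^{2},\mathbb{H})$, we obtain the bound
\begin{eqnarray*}
\left|\frac{e^{\i uh}-1}{h}\mathcal{F}_{L}(u,v)-\mathcal{G}_{L}(u,v)\right|\le\int_{\mathbb{R}^{2}}\left|\frac{f(s+h,t)-f(s,t)}{h}-g(s,t)\right|dsdt,
\end{eqnarray*}
whose right-hand side tends to $0$ as $h\to 0$ by hypothesis. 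Letting $h\to 0$ and using $\frac{e^{\i uh}-1}{h}\to\i u$ yields $\mathcal{G}_{L}(u,v)=\i u\,\mathcal{F}_{L}(u,v)$, which is the case $m=1$.

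For the inductive step, assume the identity for order $m-1$ and put $g:=\frac{\partial^{m-1}}{\partial s^{m-1}}f$. The hypothesis that $f$ has $s$-derivatives in the $L^{1}$ norm of all orders $\le m$ ensures that $g\in L^{1}(\mathbb{R}^{2},\mathbb{H})$ and that $\frac{\partial^{m}}{\partial s^{m}}f$ is the partial derivative of $g$ with respect to $s$ in the $L^{1}$ norm; applying the base case to $g$ gives $\mathcal{F}_{L}\bigl(\frac{\partial}{\partial s}g\bigr)(u,v)=\i u\,\mathcal{F}_{L}(g)(u,v)$, while the induction hypothesis gives $\mathcal{F}_{L}(g)(u,v)=(\i u)^{m-1}\mathcal{F}_{L}(u,v)$. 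Combining these, $\mathcal{F}_{L}\bigl(\frac{\partial^{m}}{\partial s^{m}}f\bigr)(u,v)=\i u\,(\i u)^{m-1}\mathcal{F}_{L}(u,v)=(\i u)^{m}\mathcal{F}_{L}(u,v)$, which closes the induction. The step requiring the most care is the non-commutativity bookkeeping: one must verify at each stage that $e^{\i uh}$ (and its limit $\i u$) really commutes with the $s$-kernel $e^{-\i us}$, so that it may be moved out of the integral on the left, while it is never moved past $f$ itself nor past the $t$-kernel $e^{-\j vt}$; once this is observed, the remainder is a routine repetition of the argument in Lemma \ref{h91}.
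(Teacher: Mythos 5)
Your proof is correct and follows the route the paper intends: the theorem is stated there without an explicit proof, as an adaptation of Lemma \ref{h91} and the induction of Theorem \ref{h92} to the left-sided kernel, and your argument supplies exactly those details. The one genuinely quaternionic point --- that the factor $e^{\i uh}$ emerges to the left of the leftmost kernel $e^{-\i us}$ (with which it commutes, both being functions of $\i$ alone) and therefore pulls out of the integral on the left without ever crossing $e^{-\j vt}$ or $f$ --- is identified and handled correctly, and the inductive step is the routine one.
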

\begin{theorem}
If $f(s,t) \in L^{1}(\mathbb{R}^{2}, \mathbb{H})$ has derivatives  with respect to $t$ in the $ L^{1}(\mathbb{R}^{2}, \mathbb{H})$ norm of all orders $ \leq n$, then
\begin{eqnarray*} \label{p94}
\mathcal{F}_{R}\left(\frac{\partial ^{n}}{\partial t^{n}}f\right) (u,v)=\mathcal{F}_{R}(u,v)(\j v)^{n},
\end{eqnarray*}
where $\mathcal{F}_{R}(\frac{\partial ^{n}}{\partial t^{n}}f) (u,v)$ is the right-sided QFT of $\frac{\partial ^{n}}{\partial t^{n}}f(s,t). $
\end{theorem}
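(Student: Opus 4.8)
The plan is to mimic the proof of the two-sided case (Lemma~\ref{h91} and Theorem~\ref{h92}) but to be careful about where the kernel factors sit. For the right-sided QFT
$$\mathcal{F}_{R}(u,v)=\int_{\mathbb{R}^2}f(s,t)e^{-\i us}e^{-\j vt}\,dsdt,$$
the factor $e^{-\j vt}$ stands to the \emph{right} of $f$. So I would first establish the first-order statement: if $g$ is the partial derivative of $f$ with respect to $t$ in the $L^{1}$ norm, then $\mathcal{G}_{R}(u,v)=\mathcal{F}_{R}(u,v)(\j v)$. The argument is the exact analogue of Lemma~\ref{h91}: the right-sided QFT of the difference quotient $\big(f(s,t+h)-f(s,t)\big)/h-g(s,t)$ is $\big(\mathcal{F}_{R}(u,v)(e^{\j vh}-1)/h\big)-\mathcal{G}_{R}(u,v)$, its modulus is dominated by $\int_{\mathbb{R}^2}\big|\big(f(s,t+h)-f(s,t)\big)/h-g(s,t)\big|\,dsdt$, which tends to $0$ by hypothesis, and $(e^{\j vh}-1)/h\to \j v$ as $h\to0$; hence $\mathcal{G}_{R}(u,v)=\mathcal{F}_{R}(u,v)(\j v)$.

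Next I would iterate. Applying the first-order result $n$ times to $\partial_t f,\partial_t^2 f,\dots$ (each of which lies in $L^{1}(\mathbb{R}^2,\mathbb{H})$ by hypothesis), one gets
$$\mathcal{F}_{R}\!\left(\frac{\partial^{n}}{\partial t^{n}}f\right)(u,v)=\mathcal{F}_{R}\!\left(\frac{\partial^{n-1}}{\partial t^{n-1}}f\right)(u,v)\,(\j v)=\cdots=\mathcal{F}_{R}(u,v)\,(\j v)^{n},$$
which is the claimed formula. The induction is routine because every differentiation in $t$ contributes a factor $\j v$ on the \emph{right}, and these right-hand factors simply accumulate as powers $(\j v)^{n}$ without any reordering being required.

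The only point that needs attention — and the one I would flag as the main obstacle — is the placement and commutation of the scalar/quaternion factors. Here $u,v\in\mathbb{R}$ are real scalars, so $v$ itself commutes with everything; the potential trouble is only that $\j$ does not commute with $f$ or with $e^{-\i us}$. But since the differentiation is purely in the variable $t$, the exponential $e^{-\i us}$ is untouched and $e^{-\j vt}$ is differentiated into $-\j v\,e^{-\j vt}$, a factor that is born on the correct (right) side and stays there; thus no reordering is ever needed and the proof goes through cleanly. (Had we instead differentiated in $s$, the factor would be $\i u$ on the \emph{left}, which is precisely why the right-sided statement is phrased in terms of $t$-derivatives only, matching the left-sided theorem being phrased in terms of $s$-derivatives.) I would also note for completeness that each $\partial_t^k f\in L^1$, so the right-sided QFT of each is well defined and the argument above is legitimate at every stage of the induction.
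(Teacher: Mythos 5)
Your proposal is correct and follows essentially the same route as the paper: the paper proves the first-order two-sided case (Lemma \ref{h91}) via the $L^1$ difference quotient, extends it by induction (Theorem \ref{h92}), and then asserts the one-sided variants without writing them out; you have simply carried out that same adaptation explicitly, correctly placing the factor $\j v$ on the right and noting why only $t$-derivatives work for the right-sided transform. No gaps.
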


\begin{theorem}($\textbf{ Inversion Theorem for two-sided QFT}$)\label{h93}\\
If $f \in L^{1}(\mathbb{R}^{2}, \mathbb{H})$ has derivatives in the $ L^{1}(\mathbb{R}^{2}, \mathbb{H})$ norm of all orders $ \leq 3$, then

\begin{eqnarray*}
f(s,t)=\frac{1}{4\pi^{2}}\int_{\mathbb{R}^{2}}e^{\i su}\mathcal{F}_{T}(u,v)e^{\j tv} dudv,
\end{eqnarray*}
for almost everywhere $(s,t)$.
\end{theorem}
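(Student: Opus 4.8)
The plan is to deduce Theorem \ref{h93} from Theorem \ref{T53}: the latter already delivers the pointwise-a.e. inversion formula for the two-sided QFT as soon as both $f$ and $\mathcal{F}_{T}$ belong to $L^{1}(\mathbb{R}^{2},\mathbb{H})$. Since $f\in L^{1}(\mathbb{R}^{2},\mathbb{H})$ is part of the hypothesis, everything reduces to showing that the assumed $L^{1}$-differentiability up to order $3$ forces $\mathcal{F}_{T}\in L^{1}(\mathbb{R}^{2},\mathbb{H})$, i.e. a sufficiently fast decay of $\mathcal{F}_{T}$. (Equivalently, one could verify condition $(I)$ of Corollary \ref{T55}; the direct route via Theorem \ref{T53} is cleaner.)

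First I would record the elementary bound $\|\mathcal{H}_{T}\|_{\infty}\leq\|h\|_{1}$, valid for any $h\in L^{1}(\mathbb{R}^{2},\mathbb{H})$, which holds because $|e^{-\i us}|=|e^{-\j vt}|=1$. Combining this with Theorem \ref{h92} applied to $h=\frac{\partial^{m+n}}{\partial s^{m}\partial t^{n}}f$ for each pair $(m,n)$ with $m+n\leq 3$ gives
\begin{eqnarray*}
|u|^{m}|v|^{n}\,|\mathcal{F}_{T}(u,v)|=\left|(\i u)^{m}\mathcal{F}_{T}(u,v)(\j v)^{n}\right|\leq\left\|\frac{\partial^{m+n}}{\partial s^{m}\partial t^{n}}f\right\|_{1}=:C_{m,n}<\infty .
\end{eqnarray*}
In particular $|\mathcal{F}_{T}|\leq C_{0,0}$, $|u|^{2}|\mathcal{F}_{T}|\leq C_{2,0}$, $|v|^{2}|\mathcal{F}_{T}|\leq C_{0,2}$, $|u|^{2}|v|\,|\mathcal{F}_{T}|\leq C_{2,1}$ and $|u|\,|v|^{2}|\mathcal{F}_{T}|\leq C_{1,2}$ everywhere.

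Then I would split $\mathbb{R}^{2}$ into the four regions determined by $|u|\leq 1$ or $|u|\geq 1$ and $|v|\leq 1$ or $|v|\geq 1$, and bound the integral of $|\mathcal{F}_{T}|$ over each. On $\{|u|\leq 1,|v|\leq 1\}$ the estimate $|\mathcal{F}_{T}|\leq C_{0,0}$ over a set of finite measure suffices; on $\{|u|\geq 1,|v|\leq 1\}$ use $|\mathcal{F}_{T}|\leq C_{2,0}/|u|^{2}$; on $\{|u|\leq 1,|v|\geq 1\}$ use $|\mathcal{F}_{T}|\leq C_{0,2}/|v|^{2}$. On the remaining corner $\{|u|\geq 1,|v|\geq 1\}$ neither mixed third-order bound is separately integrable, so I would multiply the two of them and take square roots:
\begin{eqnarray*}
|\mathcal{F}_{T}(u,v)|\leq\sqrt{\frac{C_{2,1}}{|u|^{2}|v|}\cdot\frac{C_{1,2}}{|u|\,|v|^{2}}}=\frac{\sqrt{C_{2,1}C_{1,2}}}{|u|^{3/2}|v|^{3/2}},
\end{eqnarray*}
which is integrable over $\{|u|\geq 1,|v|\geq 1\}$ since $3/2>1$. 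Adding the four contributions yields $\mathcal{F}_{T}\in L^{1}(\mathbb{R}^{2},\mathbb{H})$, and Theorem \ref{T53} then gives the claimed formula for almost every $(s,t)$.

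The only real obstacle — and it is a mild one — is precisely the corner region: each of the individual moment bounds coming from $\partial_{s}^{3}$, $\partial_{t}^{3}$, $\partial_{s}^{2}\partial_{t}$ and $\partial_{s}\partial_{t}^{2}$ retains a non-integrable $1/|u|$ or $1/|v|$ tail there, and one must combine two of them multiplicatively to cross the borderline. This is also what makes ``order $\leq 3$'' the natural hypothesis: with $L^{1}$-derivatives only up to order $2$ the best estimate available on the corner is $|\mathcal{F}_{T}|\lesssim 1/(|u|\,|v|)$, which just fails to be integrable.
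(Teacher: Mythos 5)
Your argument is correct and follows essentially the same route as the paper: both reduce the claim to showing $\mathcal{F}_{T}\in L^{1}(\mathbb{R}^{2},\mathbb{H})$ by combining the derivative formula of Theorem \ref{h92} with the trivial bound $|\mathcal{F}_{T}(\partial^{m+n}f)|\leq\|\partial^{m+n}f\|_{1}$, and then invoke Theorem \ref{T53}. The only difference is in the final bookkeeping: where you split the plane into four regions and take a geometric mean of the two mixed third-order bounds on the corner $\{|u|\geq 1,|v|\geq 1\}$, the paper gets integrability in one stroke from the elementary inequality $(1+|u|^{2}+|v|^{2})^{3/2}\leq C\sum_{m+n\leq 3}|u|^{m}|v|^{n}$, which yields the single global estimate $|\mathcal{F}_{T}(u,v)|\lesssim(1+|u|^{2}+|v|^{2})^{-3/2}$.
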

\begin{proof}
Let $z=(u,v)\in \mathbb{R}^{2}$, then there exists  constant $C$ such that
\begin{eqnarray*}
(1+|z|^{2})^{\frac{3}{2}} \leq (1+|u|^{2}+|v|^{2})^{3} \leq C \sum_{|\alpha|\leq 3}|z^{\alpha}|,
\end{eqnarray*}
where $z^{\alpha}=u^{m}v^{n},|\alpha|=m+n $.
then by  Equation $(\ref{p92})$,
\begin{eqnarray*}
|\mathcal{F}_{T}(u,v)|&&\leq (1+|z|^{2})^{-\frac{3}{2}} C\sum_{|\alpha|\leq 3}|z^{\alpha}||\mathcal{F}_{T}(u,v)|\\
&&=(1+|z|^{2})^{-\frac{3}{2}} C\sum_{|\alpha|\leq 3}\left|\mathcal{F}_{T}\left(\frac{\partial ^{m+n}}{\partial s^{m}\partial t^{n}}f\right)(u,v)\right|\\
&&\leq (1+|z|^{2})^{-\frac{3}{2}}C \sum_{|\alpha|\leq 3}\left \|\frac{\partial ^{m+n}}{\partial s^{m}\partial t^{n}}f\right\|_{1}.
\end{eqnarray*}
Since $ (1+|z|^{2})^{-\frac{3}{2}} $ is an integrable function on $\mathbb{R}^{2} $, it follow that $\mathcal{F}_{T} \in L^{1}(\mathbb{R}^{2}, \mathbb{H})$, then by Theorem $ \ref{T53}$, the proof is completed.

\end{proof}
\begin{remark}
The above method can only be applied to  two-sided QFT, but not for left-sided and right-sided QFTs, because of  non - commutativity of quaternions.
\end{remark}
\subsection{Problem B for QLCTs}
In  this subsection,   by using the relationship between QFTs and QLCTs, we firstly prove the following Lemma,
which  arises from  the relationship  between two-sided QLCT and QFT \cite{bahri2014relationship}, then we derive the inversion of QLCTs.
\begin{lemma} \label{l55}

$$f(s,t)\in L(\mathbb{R}^2,\mathbb{H}) \quad  if \quad and \quad only \quad if  \quad  p(s,t)\in L(\mathbb{R}^2,\mathbb{H})$$ and
 $$\mathcal{L}_{T}^{\i,\j}(f)(u,v )\in L(\mathbb{R}^2,\mathbb{H})  \quad  if \quad and \quad only \quad if  \quad   \quad \mathcal{P}_{T}(u,v)\in L(\mathbb{R}^2,\mathbb{H}),$$
where

 \begin{eqnarray*}
p(s,t):=e^{\i\frac{a_{1}}{2b_{1}}s^{2}}f(s,t)e^{\j\frac{a_{2}}{2b_{2}}t^{2}}.\label{hu47}
 \end{eqnarray*}
 and $\mathcal{P}_{T}(u,v)$ is the two-sided QFT of  $p(s,t).$
 \end{lemma}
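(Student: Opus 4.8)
The plan is to treat the two stated equivalences in turn, both of which reduce to a pointwise identity for the quaternionic modulus together with routine bookkeeping.

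For the first equivalence I would note that $e^{\i\frac{a_1}{2b_1}s^2}$ and $e^{\j\frac{a_2}{2b_2}t^2}$ are exponentials of pure unit quaternions, hence have modulus $1$. Since $|q_1q_2q_3|=|q_1||q_2||q_3|$ for all $q_1,q_2,q_3\in\mathbb{H}$, this gives $|p(s,t)|=|f(s,t)|$ for every $(s,t)$, so $\|p\|_1=\|f\|_1$ and therefore $f\in L^1(\mathbb{R}^2,\mathbb{H})$ if and only if $p\in L^1(\mathbb{R}^2,\mathbb{H})$.

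For the second equivalence I would first make explicit the relation between $\mathcal{L}_T^{\i,\j}(f)$ and $\mathcal{P}_T$. Inserting the kernel definitions of $K_{A_1}^{\i}$ and $K_{A_2}^{\j}$ and factoring the quadratic exponents gives $K_{A_1}^{\i}(s,u)=\frac{1}{\sqrt{\i2\pi b_1}}e^{\i\frac{d_1}{2b_1}u^2}e^{-\i\frac{u}{b_1}s}e^{\i\frac{a_1}{2b_1}s^2}$ and $K_{A_2}^{\j}(t,v)=e^{\j\frac{a_2}{2b_2}t^2}e^{-\j\frac{v}{b_2}t}e^{\j\frac{d_2}{2b_2}v^2}\frac{1}{\sqrt{\j2\pi b_2}}$; these regroupings are legitimate because every factor occurring in $K_{A_1}^{\i}$ lies in the commutative subalgebra $\mathbb{R}\oplus\mathbb{R}\i$ and every factor in $K_{A_2}^{\j}$ lies in $\mathbb{R}\oplus\mathbb{R}\j$. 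Substituting into the definition of the two-sided QLCT and recognizing $e^{\i\frac{a_1}{2b_1}s^2}f(s,t)e^{\j\frac{a_2}{2b_2}t^2}=p(s,t)$ sitting in the middle, the surviving integral is exactly the two-sided QFT of $p$ evaluated at $(u/b_1,v/b_2)$, so that
\begin{eqnarray*}
\mathcal{L}_T^{\i,\j}(f)(u,v)=\frac{1}{\sqrt{\i2\pi b_1}}\,e^{\i\frac{d_1}{2b_1}u^2}\,\mathcal{P}_T\!\left(\frac{u}{b_1},\frac{v}{b_2}\right)e^{\j\frac{d_2}{2b_2}v^2}\frac{1}{\sqrt{\j2\pi b_2}}.
\end{eqnarray*}

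Taking moduli, all the extra factors have modulus $1$ except the scalars $\frac{1}{\sqrt{2\pi b_i}}$, so $|\mathcal{L}_T^{\i,\j}(f)(u,v)|=\frac{1}{2\pi\sqrt{b_1b_2}}\,\big|\mathcal{P}_T(\frac{u}{b_1},\frac{v}{b_2})\big|$; integrating over $\mathbb{R}^2$ and substituting $u'=u/b_1$, $v'=v/b_2$ (with $b_1,b_2>0$ as assumed throughout, so the Jacobian equals $b_1b_2$) yields $\|\mathcal{L}_T^{\i,\j}(f)\|_1=\frac{\sqrt{b_1b_2}}{2\pi}\|\mathcal{P}_T\|_1$, whence the second equivalence follows. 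The one place requiring attention is the quaternionic bookkeeping in the factorization above: one must keep the $\i$-plane factors to the left of $f$ and the $\j$-plane factors to its right and never commute an $\i$-term past a $\j$-term, but once the factors are grouped correctly the rest is only multiplicativity of the modulus together with a linear change of variables, so I do not expect any serious obstacle.
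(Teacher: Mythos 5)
Your proposal is correct and follows essentially the same route as the paper: the paper's proof consists solely of displaying the identity $\mathcal{L}_{T}^{\i,\j}(f)(u,v)=\frac{1}{\sqrt{\i2b_{1}\pi}}e^{\i\frac{d_{1}}{2b_{1}}u^{2}}\mathcal{P}_{T}(\frac{u}{b_{1}},\frac{v}{b_{2}})\frac{1}{\sqrt{\j2b_{2}\pi}}e^{\j\frac{d_{2}}{2b_{2}}v^{2}}$ and leaving the rest implicit, whereas you additionally supply the kernel factorization justifying that identity and the unit-modulus/change-of-variables bookkeeping that turns it into the two stated $L^{1}$ equivalences. Nothing in your argument diverges from the paper's approach; it is simply a more complete write-up of the same proof.
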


\begin{proof}
Using
\begin{eqnarray*}\label{QFR}
 \mathcal{L}_{T}^{\i,\j}(f)(u,v )&&=\int_{\mathbb{R}^2} K_{A_{1}}^{\i}(s,u)f(s,t)K_{A_{2}}^{\j}(t,v)dsdt \nonumber\\
 &&=\frac{1}{\sqrt{\i2b_{1}\pi}}e^{\i\frac{d_{1}}{2b_{1}}u^{2}} \mathcal{P}_{T}(\frac{1}{b_{1}}u,\frac{1}{b_{2}}v)\frac{1}{\sqrt{\j2b_{2}\pi}}e^{\j\frac{d_{2}}{2b_{2}}v^{2}}.\label{hu46}
 \end{eqnarray*}
\end{proof}

By Lemma \ref{l55} and Theorem \ref{T53}, the inversion theorem of  two-sided QLCT are presented.

\begin{theorem}($\textbf{ Inversion Theorem for two-sided QLCT}$)\label{T57}\\
If one of the following conditions hold,
\begin{description}
  \item[$(\alpha)$]  $f $ and $\mathcal{L}_{T}^{\i,\j}(f) \in L^{1}(\mathbb{R}^{2}, \mathbb{H})$,
  \item[$(\beta)$] For $e^{\i\frac{a_{1}}{2b_{1}}s^{2}}f(s,t)e^{\j\frac{a_{2}}{2b_{2}}t^{2}}\in L^{1}(\mathbb{R}^{2}, \mathbb{H})$ has derivatives in the $ L^{1}(\mathbb{R}^{2}, \mathbb{H})$ norm of all orders $ \leq 3$,
\end{description}
then the original function $f$ can be recovered from its
two-sided QLCT  by Equation $(\ref{F3})$
\begin{eqnarray}\label{F3}
f(s,t)=\mathcal{L}_{T^{-1}}^{\i,\j}(     \mathcal{L}_{T}^{\i,\j}(f))(s,t )=\int_{\mathbb{R}^{2}}K_{A_{1}^{-1}}^{\i}(u,s)\mathcal{L}_{T}^{\i,\j}(f)(u,v )K_{A_{2}^{-1}}^{\j}(v,t)dudv,
\end{eqnarray}
for almost everywhere $(s,t)$.
\end{theorem}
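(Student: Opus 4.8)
The plan is to reduce \emph{both} cases to the two-sided QFT inversion theorems already proved, by means of the chirp substitution recorded in Lemma~\ref{l55}. Put $p(s,t):=e^{\i\frac{a_{1}}{2b_{1}}s^{2}}f(s,t)e^{\j\frac{a_{2}}{2b_{2}}t^{2}}$ and let $\mathcal{P}_{T}$ be its two-sided QFT. Lemma~\ref{l55} furnishes the equivalences $f\in L^{1}(\mathbb{R}^{2},\mathbb{H})\Leftrightarrow p\in L^{1}(\mathbb{R}^{2},\mathbb{H})$ and $\mathcal{L}_{T}^{\i,\j}(f)\in L^{1}(\mathbb{R}^{2},\mathbb{H})\Leftrightarrow\mathcal{P}_{T}\in L^{1}(\mathbb{R}^{2},\mathbb{H})$, together with the pointwise identity
\begin{eqnarray*}
\mathcal{L}_{T}^{\i,\j}(f)(u,v)=\frac{1}{\sqrt{\i 2b_{1}\pi}}\,e^{\i\frac{d_{1}}{2b_{1}}u^{2}}\,\mathcal{P}_{T}\!\Big(\frac{u}{b_{1}},\frac{v}{b_{2}}\Big)\,\frac{1}{\sqrt{\j 2b_{2}\pi}}\,e^{\j\frac{d_{2}}{2b_{2}}v^{2}}.
\end{eqnarray*}
Solving this for $\mathcal{P}_{T}(u/b_{1},v/b_{2})$ expresses the QFT of $p$ through $\mathcal{L}_{T}^{\i,\j}(f)$, which will be the bridge back to $f$.

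First I would treat hypothesis $(\alpha)$: if $f,\mathcal{L}_{T}^{\i,\j}(f)\in L^{1}$, then $p,\mathcal{P}_{T}\in L^{1}$ by Lemma~\ref{l55}, so Theorem~\ref{T53} applies to $p$ and gives $p(s,t)=\frac{1}{4\pi^{2}}\int_{\mathbb{R}^{2}}e^{\i su}\mathcal{P}_{T}(u,v)e^{\j tv}\,dudv$ for a.e.\ $(s,t)$. Under hypothesis $(\beta)$, $p\in L^{1}$ has $L^{1}$-derivatives of all orders $\leq 3$, which is exactly the hypothesis of Theorem~\ref{h93} applied to $p$; hence the \emph{same} a.e.\ inversion formula for $p$ holds, and moreover $\mathcal{P}_{T}\in L^{1}$, so $\mathcal{L}_{T}^{\i,\j}(f)\in L^{1}$ and $f\in L^{1}$ by Lemma~\ref{l55}, so that the right-hand side of $(\ref{F3})$ is well defined. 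Thus in either case the QFT inversion for $p$ is available, and only the translation to $f$ remains.

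For that last step I would substitute $f(s,t)=e^{-\i\frac{a_{1}}{2b_{1}}s^{2}}p(s,t)e^{-\j\frac{a_{2}}{2b_{2}}t^{2}}$ into the inversion formula for $p$, perform the linear change of variables $u\mapsto u/b_{1}$, $v\mapsto v/b_{2}$ (legitimate since $b_{1},b_{2}>0$, with Jacobian $1/(b_{1}b_{2})$), and replace $\mathcal{P}_{T}(u/b_{1},v/b_{2})$ by its expression in terms of $\mathcal{L}_{T}^{\i,\j}(f)$ from the identity above. Using $\det A_{i}=1$, so that $A_{i}^{-1}$ has entries $d_{i},-b_{i},-c_{i},a_{i}$, one groups the $\i$-valued exponentials $e^{-\i\frac{a_{1}}{2b_{1}}s^{2}}$, $e^{\i us/b_{1}}$, $e^{-\i\frac{d_{1}}{2b_{1}}u^{2}}$ on the left and the $\j$-valued ones symmetrically on the right, and recognizes precisely the inverse kernels $K_{A_{1}^{-1}}^{\i}(u,s)$ and $K_{A_{2}^{-1}}^{\j}(v,t)$; since the chirps are continuous and nonvanishing, the a.e.\ equality is preserved and $(\ref{F3})$ follows. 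The main obstacle is exactly this bookkeeping: the normalizing factors $1/\sqrt{\i 2b_{1}\pi}$, $1/\sqrt{\j 2b_{2}\pi}$ and the exponentials $e^{\j(\cdot)}$ are not real, hence do not commute with the $\i$-valued factors, so one must keep every quaternionic constant in its correct left/right position and verify that the accumulated phases $-\frac{a_{i}}{2b_{i}}s^{2}+\frac{1}{b_{i}}us-\frac{d_{i}}{2b_{i}}u^{2}$ together with the constants (with $\sqrt{-\i}=e^{-\i\pi/4}$, $\sqrt{-\j}=e^{-\j\pi/4}$) reassemble into exactly $K_{A_{i}^{-1}}$; everything else is a direct appeal to Lemma~\ref{l55}, Theorem~\ref{T53} and Theorem~\ref{h93}.
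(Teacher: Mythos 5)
Your proposal is correct and follows essentially the same route as the paper's own proof: both reduce the statement to the two-sided QFT inversion by passing to the chirp-modulated function $p(s,t)=e^{\i\frac{a_{1}}{2b_{1}}s^{2}}f(s,t)e^{\j\frac{a_{2}}{2b_{2}}t^{2}}$ via Lemma~\ref{l55}, invoke Theorem~\ref{T53} for hypothesis $(\alpha)$ and Theorem~\ref{h93} for hypothesis $(\beta)$ (the paper likewise folds $(\beta)$ back into $(\alpha)$ once $\mathcal{P}_{T}\in L^{1}$ is established), and then undo the chirps with the change of variables $u\mapsto u/b_{1}$, $v\mapsto v/b_{2}$ to reassemble the inverse kernels $K_{A_{1}^{-1}}^{\i}$ and $K_{A_{2}^{-1}}^{\j}$. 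Your attention to the left/right placement of the non-commuting quaternionic factors matches the computation carried out in the paper.
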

\begin{proof}
In one hand, if $f$ satisfies conditions $ (\alpha),$
Lemma \ref{l55} follows that if $f$ and $\mathcal{L}_{T}^{\i,\j}(f) \in L^{1}(\mathbb{R}^{2}, \mathbb{H})$, then $ p(s,t)$ and its two-sided QFT $ \mathcal{P}_{T}(u,v)$  also belong to $L^{1}(\mathbb{R}^{2}, \mathbb{H})$,
by Theorem $ \ref{T53}$, the $ p(s,t)$ can be recovered from its QFT almost everywhere as follows:
\begin{eqnarray*}
p(s,t)=\frac{1}{4 \pi^{2}}\int_{\mathbb{R}^{2}}e^{\i us}\mathcal{P}_{T}(u,v)e^{\j vt}dudv, a.e.,
\end{eqnarray*}
then, from Lemma \ref{l55}, a straightforward calculation gives:
\begin{eqnarray*}
e^{\i\frac{a_{1}}{2b_{1}}s^{2}}f(s,t)e^{\j\frac{a_{2}}{2b_{2}}t^{2}}=&&\frac{1}{4b_{1}b_{2} \pi^{2}}
\int_{\mathbb{R}^{2}}e^{\i \frac{1}{b_{1}}us}\mathcal{P}_{T}(\frac{1}{b_{1}}u,\frac{1}{b_{2}}v)e^{\j \frac{1}{b_{2}}vt}dudv,
\\
=&&b_{1}b_{2}\frac{1}{4 \pi^{2}}
\int_{\mathbb{R}^{2}}e^{\i \frac{1}{b_{1}}us}\sqrt{\i2b_{1}\pi} e^{-\i\frac{d_{1}}{2b_{1}}u^{2}} \mathcal{L}_{T}^{\i,\j}(f)(u,v )\sqrt{\j2b_{2}\pi}e^{-\i\frac{d_{2}}{2b_{2}}v^{2}} e^{\j \frac{1}{b_{2}}vt}dudv,\\
f(s,t)=&&
\int_{\mathbb{R}^{2}}\frac{1}{\sqrt{-\i2b_{1}\pi} }e^{-\i(\frac{a_{1}}{2b_{1}}s^{2}+\frac{1}{b_{1}}us-\frac{d_{1}}{2b_{1}}u^{2})} \mathcal{L}_{T}^{\i,\j}(f)(u,v )\frac{1}{\sqrt{-\j2b_{2}\pi} }e^{-\j(\frac{a_{2}}{2b_{2}}t^{2}+\frac{1}{b_{2}}vt-\frac{d_{2}}{2b_{2}}v^{2})}dudv,a.e.,
\end{eqnarray*}
the proof of the theorem for  conditions $ (\alpha)$ is  completed.
\par
On the other hand,
if $f$ satisfies conditions $ ( \beta),$ that is to say,
$p\in L^{1}(\mathbb{R}^{2}, \mathbb{H})$ has derivatives in the $ L^{1}(\mathbb{R}^{2}, \mathbb{H})$ norm of all orders $ \leq 3$, according to  Theorem \ref{h93}, it follows that $ \mathcal{P}_{T} \in  L^{1}(\mathbb{R}^{2}, \mathbb{H})$, then  Lemma \ref{l55} implies that $ \mathcal{L}_{T}^{\i,\j}(f) \in  L^{1}(\mathbb{R}^{2}, \mathbb{H})$.  Hence by conditions $ (\alpha),$ we can complete our proof for conditions $ (\beta)$.
\end{proof}

Using the  relationship  between  two-sided QLCT and right-sided, left-sided QLCTs, the existence and invertibility of right-sided and left-sided QLCTs can be inherited from the two-sided QLCT.
\begin{lemma}\label{relation}
If $f\in L^{1}(R^{2}, \mathbb{H})$, the right-sided and left-sided QLCTs  of $f$ can be decomposed into  the sum of two two-sided QLCT.

\begin{eqnarray} \label{f6}
 \mathcal{L}_{R}^{\i,\j}(f)(u,v)= \mathcal{L}_{T}^{\i,\j}(f_{a})(u,v)+ \mathcal{L}_{T}^{-\i,\j}(f_{b})(u,v)\j,
 \end{eqnarray}
\begin{eqnarray} \label{f7}
\mathcal{L}_{L}^{\i,\j}(f)(u,v)= \mathcal{L}_{T}^{\i,\j}(f_{d})(u,v)+ \i\mathcal{L}_{T}^{\i,-\j}(f_{e})(u,v),
 \end{eqnarray}
where
\begin{eqnarray}\label{f8}
f=f_{a}+f_{b}\j, f_{a}:=f_{0}+\i f_{1}, f_{b}:=f_{2}+\i f_{3}, f=f_{d}+\i f_{e}, f_{d}:=f_{0}+\j f_{2}, f_{e}:=f_{1}+\j f_{3}.
\end{eqnarray}
\end{lemma}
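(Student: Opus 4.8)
The plan is to prove Lemma \ref{relation} by a direct computation that exploits the fundamental commutation rule $\j\,\mu = \bar\mu\,\j$ for any quaternion $\mu$ lying in the span of $\{1,\i\}$; in particular $\j\,\i = -\i\,\j$ and $\j\, e^{\i\theta} = e^{-\i\theta}\,\j$. The key observation is that the kernel $K_{A_1}^{\i}(s,u)$ has all its imaginary content in the direction $\i$, so moving it past a factor of $\j$ simply flips the sign of $\i$, i.e. $\j\,K_{A_1}^{\i}(s,u) = K_{A_1}^{-\i}(s,u)\,\j$ (and similarly $\i\,K_{A_2}^{\j}(t,v) = K_{A_2}^{-\j}(t,v)\,\i$). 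This is exactly the mechanism that turns a right-sided (resp.\ left-sided) transform into a pair of two-sided transforms.

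First I would establish the splitting $f = f_a + f_b\,\j$ with $f_a = f_0 + \i f_1$ and $f_b = f_2 + \i f_3$, noting that both $f_a$ and $f_b$ take values in the commutative subfield $\mathbb{C} = \mathrm{span}\{1,\i\}$, and observe that $f\in L^1(\mathbb{R}^2,\mathbb{H})$ forces $f_a, f_b \in L^1(\mathbb{R}^2,\mathbb{C})$ by Lemma \ref{le313} (so all the integrals below converge absolutely and Fubini applies). Then I would insert this splitting into the definition of $\mathcal{L}_{R}^{\i,\j}(f)(u,v) = \int_{\mathbb{R}^2} f(s,t)K_{A_1}^{\i}(s,u)K_{A_2}^{\j}(t,v)\,dsdt$, obtaining two pieces: the $f_a$ piece is immediately $\mathcal{L}_{T}^{\i,\j}(f_a)(u,v)$ since $f_a$ commutes with $K_{A_1}^{\i}$ and we can freely reposition it to the left of the first kernel; the $f_b\,\j$ piece requires pushing the $\j$ rightward past $K_{A_1}^{\i}(s,u)$, which converts that kernel to $K_{A_1}^{-\i}(s,u)$, and then past $K_{A_2}^{\j}(t,v)$ — but here one must be careful, since $K_{A_2}^{\j}$ does \emph{not} commute with $\j$. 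The clean way is to move the $\j$ all the way to the far right \emph{first}: write $f_b(s,t)\,\j\,K_{A_1}^{\i}(s,u)K_{A_2}^{\j}(t,v)$; since $K_{A_2}^{\j}(t,v)$ lies in $\mathrm{span}\{1,\j\}$ it commutes with $\j$, so this equals $f_b(s,t)\,K_{A_1}^{-\i}(s,u)K_{A_2}^{\j}(t,v)\,\j = \bigl[K_{A_1}^{-\i}(s,u)\,f_b(s,t)\,K_{A_2}^{\j}(t,v)\bigr]\j$ (using that $f_b$ commutes with $K_{A_1}^{-\i}$). Integrating gives $\mathcal{L}_{T}^{-\i,\j}(f_b)(u,v)\,\j$, which is \eqref{f6}. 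The identity \eqref{f7} for $\mathcal{L}_{L}^{\i,\j}(f)$ is proved symmetrically, splitting $f = f_d + \i f_e$ with $f_d = f_0 + \j f_2$, $f_e = f_1 + \j f_3$ in $\mathrm{span}\{1,\j\}$, and pushing the leading $\i$ past $K_{A_2}^{\j}(t,v)$ to flip it to $K_{A_2}^{-\j}(t,v)$.

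The main obstacle — really the only subtle point — is bookkeeping the non-commutativity correctly: one must decide in which order to migrate the unit quaternions past the two chirp kernels, and verify at each step that the factor being crossed actually lies in a subfield that commutes (or anticommutes predictably) with it. In particular it is tempting but wrong to move $\j$ past $K_{A_2}^{\j}$ expecting a sign flip; the correct statement is that they commute, and the sign flip only happens against $K_{A_1}^{\i}$. Once the placement of factors is pinned down, the rest is a one-line rearrangement under the integral, justified by absolute integrability of $f_a,f_b$ (resp.\ $f_d,f_e$). I would also remark that the $L^1$ membership assertions in the lemma statement are automatic: $\|f_a\|_1,\|f_b\|_1 \le \|f\|_1 \cdot c$ and conversely $\|f\|_1 \le \|f_a\|_1 + \|f_b\|_1$, and likewise on the transform side, so $f\in L^1$ iff its pieces are, and the two-sided transforms appearing on the right of \eqref{f6}–\eqref{f7} are in $L^1$ iff $\mathcal{L}_{R}^{\i,\j}(f)$ (resp.\ $\mathcal{L}_{L}^{\i,\j}(f)$) is — which is precisely what is needed to transfer the inversion theorem for two-sided QLCT to the one-sided cases.
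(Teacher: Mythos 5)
Your proposal is correct and follows essentially the same route as the paper: split $f=f_a+f_b\,\j$ (resp.\ $f=f_d+\i f_e$), use that $f_a,f_b$ and $K_{A_1}^{\pm\i}$ live in the commutative span of $\{1,\i\}$, and push the stray unit $\j$ (resp.\ $\i$) past $K_{A_1}^{\i}$ (resp.\ $K_{A_2}^{\j}$) with the sign flip $\j\,K_{A_1}^{\i}=K_{A_1}^{-\i}\,\j$, which is exactly the computation in the paper. Your version is in fact slightly more careful than the paper's, which only writes out the right-sided case and omits the commutation justifications you supply.
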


\begin{proof}
It suffices to prove  Equation (\ref{f6}), Equation (\ref{f7}) can be  proved  in the similar way, so we omit it .
\begin{eqnarray*}
 \mathcal{L}_{R}^{\i,\j}(f)(u,v)=&&\int_{\mathbb{R}^2}[ f_{a}(s,t)+f_{b}(s,t)\j]K_{A_{1}}^{ \i}(s,u)K_{A_{2}}^{ \j}(t,v)dxdy \nonumber \\
=&&\int_{\mathbb{R}^2}[ f_{a}(s,t)]K_{A_{1}}^{ \i}(s,u)K_{A_{2}}^{ \j}(t,v)dsdt
 +\int_{\mathbb{R}^2}[ f_{b}(s,t)]K_{A_{1}}^{ -\i}(s,u)K_{A_{2}}^{ \j}(t,v)dsdt \j \nonumber \\
 =&&\int_{\mathbb{R}^2}K_{A_{1}}^{ \i}(s,u)f_{a}(s,t)K_{A_{2}}^{ \j}(t,v)dsdt
 +\int_{\mathbb{R}^2}K_{A_{1}}^{ -\i}(s,u) f_{b}(s,t)K_{A_{2}}^{ \j}(t,v)dsdt \j, \nonumber \\
\end{eqnarray*}

\end{proof}


Therefore, using suitable conditions in Theorems \ref{the317}, \ref{T57} and  Lemma \ref{relation},
we have the following inversion   formulas  of right-sided and left-sided QLCTs.
\begin{coro}($\textbf{ Inversion formulas for right-sided and left-sided QLCTs}$)\label{c419}\\
Suppose   $f\in L^{1}(\mathbb{R}^2,\mathbb{H}) $,  in the cross-neighborhood  of  point $(x_{0},y_{0}),$ $f(s,t)$ is a  QBVF,
\\
if $e^{\i\frac{a_{1}}{2b_{1}}s^{2}}f_{a}(s,t)e^{\j\frac{a_{2}}{2b_{2}}t^{2}}$ and
  $e^{-\i\frac{a_{1}}{2b_{1}}s^{2}}f_{b}(s,t)e^{\j\frac{a_{2}}{2b_{2}}t^{2}}$ belongs to $\bf{LC},$
then the inversion formula of right-sided  QLCT of $f $ is obtained as follows:
\begin{eqnarray}\label{xx2}
f(x_{0},y_{0})&&= \lim_{\begin{subarray}
N N\to \infty \\M \to \infty
\end{subarray}
 }\int_{-N}^{N}\int_{-M}^{M}K_{A_{1}^{-1}}^{\i}(u,x_{0})\mathcal{L}_{T}^{\i,\j}(f_{a})(u,v)K_{A_{2}^{-1}}^{\j}(v,y_{0})dudv \nonumber\\
 &&+\lim_{\begin{subarray}
NN \to \infty \\M \to \infty
\end{subarray}
 }\int_{-N}^{N}\int_{-M}^{M}K_{A_{1}^{-1}}^{-\i}(u,x_{0})\mathcal{L}_{T}^{-\i,\j}(f_{b})(u,v)K_{A_{2}^{-1}}^{\j}(v,y_{0})dudv\j.
\end{eqnarray}
If
$e^{\i\frac{a_{1}}{2b_{1}}s^{2}}f_{d}(s,t)e^{\j\frac{a_{2}}{2b_{2}}t^{2}}$  and
  $e^{\i\frac{a_{1}}{2b_{1}}s^{2}}f_{e}(s,t)e^{-\j\frac{a_{2}}{2b_{2}}t^{2}}$ belongs to $\bf{LC},$\\
  then
the inversion formula of left-sided  QLCT of $f $ is obtained as follows:
\begin{eqnarray}\label{xxx2}
f(x_{0},y_{0})&&= \lim_{\begin{subarray}
N N\to \infty \\M \to \infty
\end{subarray}
 }\int_{-N}^{N}\int_{-M}^{M}K_{A_{1}^{-1}}^{\i}(u,x_{0})\mathcal{L}_{T}^{\i,\j}(f_{d})(u,v)K_{A_{2}^{-1}}^{\j}(v,y_{0})dudv \nonumber\\
 &&+\i\lim_{\begin{subarray}
NN \to \infty \\M \to \infty
\end{subarray}
 }\int_{-N}^{N}\int_{-M}^{M}K_{A_{1}^{-1}}^{\i}(u,x_{0}) \mathcal{L}_{T}^{\i,-\j}(f_{e})(u,v)K_{A_{2}^{-1}}^{-\j}(v,y_{0})dudv,
\end{eqnarray}
where
$$
f(x_{0},y_{0})=\frac {f(x_{0}+0,y_{0}+0)+f(x_{0}+0,y_{0}-0)+f(x_{0}-0,y_{0}+0)+f(x_{0}-0,y_{0}-0)}{ 4}.$$

\end{coro}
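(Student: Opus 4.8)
\emph{Proof proposal.} The plan is to reduce both formulas to the two-sided QLCT inversion theorem (Theorem \ref{the317}), exactly along the route indicated in Remark \ref{re318}(2), using the splitting of Lemma \ref{relation}. I will describe the right-sided case \eqref{xx2} in detail; the left-sided case \eqref{xxx2} is obtained verbatim after replacing \eqref{f6} by \eqref{f7}.

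First I would check that the auxiliary functions inherit all the regularity that Theorem \ref{the317} requires. Since $f\in L^{1}(\mathbb{R}^{2},\mathbb{H})$, Lemma \ref{le313} gives $f_{n}\in L^{1}(\mathbb{R}^{2},\mathbb{R})$ for $n=0,1,2,3$, so by \eqref{f8} the functions $f_{a}=f_{0}+\i f_{1}$ and $f_{b}=f_{2}+\i f_{3}$ belong to $L^{1}(\mathbb{R}^{2},\mathbb{H})$. Because $f$ is a QBVF in the cross-neighborhood of $(x_{0},y_{0})$, each component $f_{n}$ is a BVF there (Definition \ref{def312}), and since the quaternionic components of $f_{a}$ and $f_{b}$ are respectively $(f_{0},f_{1},0,0)$ and $(f_{2},f_{3},0,0)$, both $f_{a}$ and $f_{b}$ are QBVFs. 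The two $\mathbf{LC}$ hypotheses of the corollary are precisely the ones consumed in the middle step: the proof of Theorem \ref{the317} reduces the two-sided QLCT of a function to the two-sided QFT of the chirp-modulated function $e^{\i\frac{a_{1}}{2b_{1}}s^{2}}(\,\cdot\,)e^{\j\frac{a_{2}}{2b_{2}}t^{2}}$, and when the left kernel $K_{A_{1}}^{\i}$ is replaced by $K_{A_{1}}^{-\i}$ the relevant modulation is $e^{-\i\frac{a_{1}}{2b_{1}}s^{2}}(\,\cdot\,)e^{\j\frac{a_{2}}{2b_{2}}t^{2}}$ — which is exactly the object on which $\mathbf{LC}$ is imposed for $f_{b}$.

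Next I would apply two-sided QLCT inversion to each summand. For $f_{a}$, Theorem \ref{the317} gives directly
\[
\lim_{N,M\to\infty}\int_{-N}^{N}\!\!\int_{-M}^{M}K_{A_{1}^{-1}}^{\i}(u,x_{0})\,\mathcal{L}_{T}^{\i,\j}(f_{a})(u,v)\,K_{A_{2}^{-1}}^{\j}(v,y_{0})\,dudv=f_{a}(x_{0},y_{0}),
\]
where $f_{a}(x_{0},y_{0})$ denotes the cross-neighborhood average of $f_{a}$. For $f_{b}$ one uses the variant of Theorem \ref{the317} in which the left imaginary unit $\i$ is replaced by $-\i$; this is legitimate because $-\i$ is a unit pure quaternion and the pair $(-\i,\j)$ satisfies the orthogonality relations \eqref{hu37}, so Remark \ref{re316} (applied at the QLCT level via the same chirp reduction) yields
\[
\lim_{N,M\to\infty}\int_{-N}^{N}\!\!\int_{-M}^{M}K_{A_{1}^{-1}}^{-\i}(u,x_{0})\,\mathcal{L}_{T}^{-\i,\j}(f_{b})(u,v)\,K_{A_{2}^{-1}}^{\j}(v,y_{0})\,dudv=f_{b}(x_{0},y_{0}).
\]
Multiplying the second identity on the right by $\j$, adding it to the first (using that the cross-neighborhood average is $\mathbb{R}$-linear and commutes with right multiplication by $\j$), and substituting the decomposition $\mathcal{L}_{R}^{\i,\j}(f)=\mathcal{L}_{T}^{\i,\j}(f_{a})+\mathcal{L}_{T}^{-\i,\j}(f_{b})\j$ from \eqref{f6}, shows that the right-hand side of \eqref{xx2} equals $f_{a}(x_{0},y_{0})+f_{b}(x_{0},y_{0})\j=f(x_{0},y_{0})$, the claimed average. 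The left-sided formula \eqref{xxx2} follows identically from \eqref{f7}, now invoking the $(\i,-\j)$-variant of Theorem \ref{the317} for the $f_{e}$-piece (whose chirp is $e^{\i\frac{a_{1}}{2b_{1}}s^{2}}(\,\cdot\,)e^{-\j\frac{a_{2}}{2b_{2}}t^{2}}$, matching the stated $\mathbf{LC}$ hypothesis) and left-multiplying that piece by $\i$.

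The main obstacle is the sign bookkeeping in the middle step: one must verify carefully that flipping $\i\mapsto-\i$ (respectively $\j\mapsto-\j$) in the QLCT kernel flips the sign of the chirp factor $e^{\pm\i\frac{a_{1}}{2b_{1}}s^{2}}$ (respectively $e^{\pm\j\frac{a_{2}}{2b_{2}}t^{2}}$) correctly, so that the $\mathbf{LC}$ conditions listed in the corollary are exactly the ones needed when Theorem \ref{the317}/Remark \ref{re316} is re-run on $f_{a},f_{b},f_{d},f_{e}$, and that the inverse kernels $K_{A_{i}^{-1}}^{\pm\i}$, $K_{A_{i}^{-1}}^{\pm\j}$ appearing in \eqref{xx2}–\eqref{xxx2} are precisely those produced by that argument. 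Everything else is a routine assembly of linearity and the splitting lemma.
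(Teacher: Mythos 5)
Your proposal is correct and follows exactly the route the paper intends: the paper derives this corollary, without writing out details, from Lemma \ref{relation} (the splitting $f=f_{a}+f_{b}\j$, resp. $f=f_{d}+\i f_{e}$, turning the one-sided QLCTs into sums of two-sided ones) combined with Theorem \ref{the317} and its sign-flipped $(\pm\i,\pm\j)$ variants, which is precisely what you do. Your additional checks — that $f_{a},f_{b},f_{d},f_{e}$ inherit integrability and the QBVF property, and that the stated $\bf{LC}$ hypotheses are exactly those consumed by the chirp-modulated reductions — fill in the steps the paper leaves implicit.
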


\begin{coro}($\textbf{ Inversion formula for right-sided QLCT}$)\label{c420}\\
Let $f  \in L^{1}(\mathbb{R}^{2}, \mathbb{H}) $. If
\par
$1)$ $\mathcal{L}_{T}^{\i,\j}(f_{a})$ and $\mathcal{L}_{T}^{-\i,\j}(f_{b})  \in L^{1}(\mathbb{R}^{2}, \mathbb{H}) $,
 \par
or
\par
$2) e^{\i\frac{a_{1}}{2b_{1}}s^{2}}f_{a}(s,t)e^{\j\frac{a_{2}}{2b_{2}}t^{2}}$ and
 $ e^{-\i\frac{a_{1}}{2b_{1}}s^{2}}f_{b}(s,t)e^{\j\frac{a_{2}}{2b_{2}}t^{2}}$ both have derivatives in the $ L^{1}(\mathbb{R}^{2}, \mathbb{H})$ norm of all orders $ \leq 3$,

then the inversion formula of right-sided QLCT of $f $ is
\begin{eqnarray}\label{rr1}
f(s,t)= \mathcal{L}_{T^{-1}}^{\i,\j}(\mathcal{L}_{T}^{\i,\j}(f_{a}))(s,t)+ \mathcal{L}_{T^{-1}}^{-\i,\j}(\mathcal{L}_{T}^{-\i,\j}(f_{b}))(s,t)\j. \quad a.e..
\end{eqnarray}
\end{coro}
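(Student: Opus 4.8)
The plan is to reduce the right-sided QLCT inversion to the two-sided case already settled in Theorem \ref{T57}, using the decomposition of Lemma \ref{relation}. Write $f=f_{a}+f_{b}\j$ with $f_{a}=f_{0}+\i f_{1}$ and $f_{b}=f_{2}+\i f_{3}$ as in (\ref{f8}). Since $f\in L^{1}(\mathbb{R}^{2},\mathbb{H})$, Lemma \ref{le313} gives $f_{0},f_{1},f_{2},f_{3}\in L^{1}(\mathbb{R}^{2},\mathbb{R})$, so $f_{a},f_{b}$ are $\mathbb{H}$-valued (they take values in the plane $\{a+\i b:a,b\in\mathbb{R}\}\subset\mathbb{H}$) and belong to $L^{1}(\mathbb{R}^{2},\mathbb{H})$. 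By (\ref{f6}) we have
\[
\mathcal{L}_{R}^{\i,\j}(f)(u,v)=\mathcal{L}_{T}^{\i,\j}(f_{a})(u,v)+\mathcal{L}_{T}^{-\i,\j}(f_{b})(u,v)\j ,
\]
so it suffices to invert each of the two two-sided QLCTs on the right-hand side.

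First I would observe that $\mathcal{L}_{T}^{-\i,\j}$ is one of the generalized two-sided QLCTs of Remark \ref{re316}: the pair $\mu_{1}=-\i$, $\mu_{2}=\j$ satisfies $\mu_{1}^{2}=\mu_{2}^{2}=-1$ together with the orthogonality relation (\ref{hu37}), so the whole two-sided inversion machinery (Lemma \ref{l55}, Theorems \ref{T53} and \ref{h93}, and hence Theorem \ref{T57}) applies verbatim with $\i$ replaced by $-\i$; the only change is that the chirp pulled out on the left is $e^{-\i\frac{a_{1}}{2b_{1}}s^{2}}$ instead of $e^{\i\frac{a_{1}}{2b_{1}}s^{2}}$, which is exactly the factor appearing in hypothesis $2)$ for $f_{b}$. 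Granting this, under hypothesis $1)$ both $\mathcal{L}_{T}^{\i,\j}(f_{a})$ and $\mathcal{L}_{T}^{-\i,\j}(f_{b})$ lie in $L^{1}(\mathbb{R}^{2},\mathbb{H})$, so condition $(\alpha)$ of Theorem \ref{T57} (and its $-\i$ analogue) yields
\[
f_{a}(s,t)=\mathcal{L}_{T^{-1}}^{\i,\j}(\mathcal{L}_{T}^{\i,\j}(f_{a}))(s,t),\qquad f_{b}(s,t)=\mathcal{L}_{T^{-1}}^{-\i,\j}(\mathcal{L}_{T}^{-\i,\j}(f_{b}))(s,t)
\]
for almost every $(s,t)$; under hypothesis $2)$ the same two identities follow instead from condition $(\beta)$ of Theorem \ref{T57} applied with $\i$, resp. $-\i$. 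Adding these, after multiplying the second on the right by $\j$, gives $f=f_{a}+f_{b}\j=\mathcal{L}_{T^{-1}}^{\i,\j}(\mathcal{L}_{T}^{\i,\j}(f_{a}))+\mathcal{L}_{T^{-1}}^{-\i,\j}(\mathcal{L}_{T}^{-\i,\j}(f_{b}))\j$ a.e., which is (\ref{rr1}).

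The one step that I expect to require real care is the claim in the second paragraph that Theorem \ref{T57} is insensitive to replacing the left-hand square root $\i$ by $-\i$: its proof runs through Lemma \ref{l55}, Theorem \ref{T53} and, for part $(\beta)$, Theorem \ref{h93}, so one must verify that each of these survives the substitution. This is precisely what Remark \ref{re316} supplies, since $\{-\i,\j\}$ is an admissible pair of imaginary units; once that is in hand the corollary is just bookkeeping with the splitting (\ref{f8}). The analogous left-sided statement would be proved the same way using (\ref{f7}) and the admissible pair $\{\i,-\j\}$.
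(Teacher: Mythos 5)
Your argument is correct and is exactly the route the paper intends: the corollary is stated as a consequence of the decomposition $\mathcal{L}_{R}^{\i,\j}(f)=\mathcal{L}_{T}^{\i,\j}(f_{a})+\mathcal{L}_{T}^{-\i,\j}(f_{b})\j$ from Lemma \ref{relation} together with the two-sided inversion Theorem \ref{T57} applied to $f_{a}$ and (with $\i$ replaced by $-\i$) to $f_{b}$, and the paper leaves this bookkeeping implicit. Your extra remark that the pair $\{-\i,\j\}$ is admissible in the sense of Remark \ref{re316}, so that Lemma \ref{l55} and Theorems \ref{T53}, \ref{h93}, \ref{T57} survive the substitution, is precisely the point that needs checking and you have handled it correctly.
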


\begin{coro}($\textbf{ Inversion formula for left-sided QLCT}$)\label{c421}\\
Let $f\in L^{1}(\mathbb{R}^{2}, \mathbb{H}).$ If
\par
$1) \mathcal{L}_{T}^{\i,\j}(f_{d})$ and $\mathcal{L}_{T}^{\i,-\j}(f_{e})\in L^{1}(\mathbb{R}^{2}, \mathbb{H}) $,
 \par
or
\par
$2)  e^{\i\frac{a_{1}}{2b_{1}}s^{2}}f_{d}(s,t)e^{\j\frac{a_{2}}{2b_{2}}t^{2}}$ and
 $ e^{\i\frac{a_{1}}{2b_{1}}s^{2}}f_{e}(s,t)e^{-\j\frac{a_{2}}{2b_{2}}t^{2}}$ both have  derivatives in the $ L^{1}(\mathbb{R}^{2}, \mathbb{H})$ norm of all orders $ \leq 3$,\\
then the inversion formula of left-sided QLCT of $f$ is
\begin{eqnarray}\label{ll1}
f(s,t)= \mathcal{L}_{T^{-1}}^{\i,\j}(\mathcal{L}_{T}^{\i,\j}(f_{d}))(s,t)+\i \mathcal{L}_{T^{-1}}^{\i,-\j}(\mathcal{L}_{T}^{\i,-\j}(f_{e}))(s,t), \quad  a.e..
\end{eqnarray}
\end{coro}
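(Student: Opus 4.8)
The plan is to reduce the left-sided QLCT inversion to the two-sided QLCT inversion already established in Theorem \ref{T57}, using the decomposition of Lemma \ref{relation}. Write $f=f_{d}+\i f_{e}$ with $f_{d}=f_{0}+\j f_{2}$ and $f_{e}=f_{1}+\j f_{3}$ as in Equation (\ref{f8}). Since $|f_{d}|\le|f|$ and $|f_{e}|\le|f|$ pointwise, Lemma \ref{le313} gives $f_{d},f_{e}\in L^{1}(\mathbb{R}^{2},\mathbb{H})$. By Equation (\ref{f7}) we have $\mathcal{L}_{L}^{\i,\j}(f)=\mathcal{L}_{T}^{\i,\j}(f_{d})+\i\,\mathcal{L}_{T}^{\i,-\j}(f_{e})$, so it suffices to recover $f_{d}$ from its two-sided QLCT $\mathcal{L}_{T}^{\i,\j}(f_{d})$ and $f_{e}$ from the two-sided QLCT $\mathcal{L}_{T}^{\i,-\j}(f_{e})$, and then combine the two pointwise inversions.

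For the transform with units $(\i,\j)$ Theorem \ref{T57} applies verbatim. For $\mathcal{L}_{T}^{\i,-\j}$ one invokes the generalized two-sided QLCT in the sense of Remark \ref{re316}: the pair $(\mu_{1},\mu_{2})=(\i,-\j)$ satisfies $\mu_{1}^{2}=\mu_{2}^{2}=-1$ and the orthogonality relation (\ref{hu37}), so the relationship Lemma \ref{l55} between QFT and QLCT, the derivative-decay estimate of Theorem \ref{h93}, and hence Theorem \ref{T57} all remain valid with $\j$ replaced by $-\j$; the associated chirp-stripped function is then $e^{\i\frac{a_{1}}{2b_{1}}s^{2}}f_{e}(s,t)e^{-\j\frac{a_{2}}{2b_{2}}t^{2}}$, which is precisely the function appearing in hypothesis $2)$.

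Now split into the two cases. Under hypothesis $1)$ the functions $\mathcal{L}_{T}^{\i,\j}(f_{d})$ and $\mathcal{L}_{T}^{\i,-\j}(f_{e})$ are integrable, so condition $(\alpha)$ of Theorem \ref{T57}, in its plain and its generalized form, yields $\mathcal{L}_{T^{-1}}^{\i,\j}(\mathcal{L}_{T}^{\i,\j}(f_{d}))=f_{d}$ and $\mathcal{L}_{T^{-1}}^{\i,-\j}(\mathcal{L}_{T}^{\i,-\j}(f_{e}))=f_{e}$ almost everywhere. Under hypothesis $2)$ the two chirp-modulated functions have all derivatives up to order $3$ in the $L^{1}$ norm, so condition $(\beta)$ of Theorem \ref{T57} gives the same two pointwise inversions. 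In either case, adding the two identities and using $f=f_{d}+\i f_{e}$ gives
$$
\mathcal{L}_{T^{-1}}^{\i,\j}(\mathcal{L}_{T}^{\i,\j}(f_{d}))(s,t)+\i\,\mathcal{L}_{T^{-1}}^{\i,-\j}(\mathcal{L}_{T}^{\i,-\j}(f_{e}))(s,t)=f_{d}(s,t)+\i f_{e}(s,t)=f(s,t)
$$
for almost every $(s,t)$, which is Equation (\ref{ll1}); combined with (\ref{f7}) this expresses $f$ through the left-sided QLCT of $f$.

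The only point requiring care is the legitimacy of running the two-sided QLCT machinery with the unit $-\j$ in place of $\j$: one must check that Lemma \ref{l55} and Theorem \ref{h93} carry over to the generalized kernel, which they do because the arguments use only $\mu^{2}=-1$ and the mutual orthogonality of the two imaginary units. Apart from this, the proof is bookkeeping with the non-commutative decompositions of Lemma \ref{relation}, and there is no further analytic obstacle.
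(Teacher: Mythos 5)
Your proof is correct and follows exactly the route the paper intends: the paper states Corollary \ref{c421} without a separate proof, deriving it from the decomposition $\mathcal{L}_{L}^{\i,\j}(f)=\mathcal{L}_{T}^{\i,\j}(f_{d})+\i\,\mathcal{L}_{T}^{\i,-\j}(f_{e})$ of Lemma \ref{relation} together with the two-sided inversion of Theorem \ref{T57}, which is precisely your argument. Your explicit check that the two-sided machinery remains valid with $-\j$ in place of $\j$ (via the generalized units of Remark \ref{re316}) is a detail the paper leaves implicit, and it is handled correctly.
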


\begin{remark}
From the above theorem, we can find that the original function can be recovered from its right-sided and left-sided QLCTs by two-sided QLCTs of its components.
\end{remark}
The proof of following Lemma $\ref{l81}$ is straightforward.
 \begin{lemma}\label{l81}
 If $f\in L^{1}(\mathbb{R}^{2}, \mathbb{H})$, then
\begin{eqnarray*}
\mathcal{L}_{R}^{\i}(f)(u,t)
&=&\mathcal{F}_{r}^{\i}(f(\cdot,t) \frac{1}{\sqrt{2b_{1}\i\pi}}e^{\i\frac{a_{1}}{2b_{1}}(\cdot)^{2}})
(\frac{u}{b_{1}}, t)e^{\i\frac{d_{1}}{2b_{1}}u^{2}},\\
\mathcal{L}_{R}^{\i,\j}(f)(u,v)
&=&\mathcal{F}_{r}^{\j}\bigg(\mathcal{L}_{R}^{\i}(f)
(u, \cdot)e^{\j\frac{a_{2}}{2b_{2}}(\cdot)^{2}}\bigg)(u, \frac{v}{b_{2}})\frac{1}{\sqrt{2b_{2}\j\pi}}e^{\j\frac{d_{2}}{2b_{2}}v^{2}},
\end{eqnarray*}

where
\begin{eqnarray*}
\mathcal{F}_{r}^{\i}(u,t)&&:=\int_{\mathbb{R}}f(s,t)e^{\i us}ds.\\
\mathcal{F}_{r}^{\j}(u,v)&&:=\int_{\mathbb{R}}f(u,t)e^{\j vt}dt.\\
\mathcal{L}_{R}^{\i}(f)(u,t)&&:=\int_{\mathbb{R}}f(s,t)K_{A_{1}}^{\i}(s,u)ds.\\
\end{eqnarray*}

\end{lemma}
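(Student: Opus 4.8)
The proof is essentially bookkeeping with the defining integrals, so I will only describe the steps. The plan is to start from the key observation that each canonical kernel factorises as a product of three exponentials whose arguments all lie in one imaginary plane, hence commute among themselves:
\[
K_{A_1}^{\i}(s,u)=\frac{1}{\sqrt{2b_1\i\pi}}\,e^{\i\frac{a_1}{2b_1}s^{2}}\,e^{-\i\frac{1}{b_1}us}\,e^{\i\frac{d_1}{2b_1}u^{2}},
\]
and likewise $K_{A_2}^{\j}(t,v)$ with $\j$ in place of $\i$ and the index $2$ in place of $1$. I would also note that, since $|K_{A_1}^{\i}(s,u)|=(2\pi b_1)^{-1/2}$ and $|K_{A_2}^{\j}(t,v)|=(2\pi b_2)^{-1/2}$ are bounded and $f\in L^{1}(\mathbb{R}^{2},\mathbb{H})$, every integral below is absolutely convergent, which legitimises the interchanges of integration by Fubini's theorem (applied componentwise, via Lemma \ref{le313}).

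For the first identity I would insert the factorisation into $\mathcal{L}_{R}^{\i}(f)(u,t)=\int_{\mathbb{R}}f(s,t)K_{A_1}^{\i}(s,u)\,ds$. The factor $e^{\i\frac{d_1}{2b_1}u^{2}}$ stands to the right of $f(s,t)$ and is constant in $s$, so it comes out of the integral as a right multiplier; what remains is $\bigl(\int_{\mathbb{R}}f(s,t)\tfrac{1}{\sqrt{2b_1\i\pi}}e^{\i\frac{a_1}{2b_1}s^{2}}e^{-\i\frac{1}{b_1}us}\,ds\bigr)e^{\i\frac{d_1}{2b_1}u^{2}}$, and by the definition of $\mathcal{F}_{r}^{\i}$ the inner integral is $\mathcal{F}_{r}^{\i}$ of the function $s\mapsto f(s,t)\tfrac{1}{\sqrt{2b_1\i\pi}}e^{\i\frac{a_1}{2b_1}s^{2}}$ evaluated at the frequency $u/b_1$ (the factor $1/b_1$ being the coefficient of $us$ in the exponent of the kernel). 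This is the first displayed formula.

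For the second identity I would begin from $\mathcal{L}_{R}^{\i,\j}(f)(u,v)=\int_{\mathbb{R}^2}f(s,t)K_{A_1}^{\i}(s,u)K_{A_2}^{\j}(t,v)\,dsdt$ and integrate in $s$ first; since $K_{A_2}^{\j}(t,v)$ does not involve $s$, it factors out of the inner integral on the right, leaving $\int_{\mathbb{R}}\mathcal{L}_{R}^{\i}(f)(u,t)K_{A_2}^{\j}(t,v)\,dt$. I would then repeat on this $t$-integral exactly the one-variable manipulation of the previous paragraph: substitute the factorisation of $K_{A_2}^{\j}$, pull the $t$-independent factor $\tfrac{1}{\sqrt{2b_2\j\pi}}e^{\j\frac{d_2}{2b_2}v^{2}}$ to the right, and identify the remainder with $\mathcal{F}_{r}^{\j}$ of $t\mapsto\mathcal{L}_{R}^{\i}(f)(u,t)e^{\j\frac{a_2}{2b_2}t^{2}}$ at the frequency $v/b_2$, which gives the second displayed formula. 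No substantial obstacle arises; the only points requiring attention are keeping track of the side — always the right — on which each $u$-only or $v$-only exponential leaves the integral (forced by the position of the kernels relative to $f$ in the right-sided transform) and the routine appeal to Fubini, which rests on the boundedness of the kernels together with $f\in L^{1}$.
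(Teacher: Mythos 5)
Your computation is correct and is precisely the routine verification the paper omits: the paper states only that ``the proof of the following Lemma is straightforward'' and supplies no argument, and the intended argument is exactly your factorisation of the kernel, extraction of the $s$- (resp.\ $t$-) independent right factors, and componentwise Fubini. One caveat, which is an inconsistency in the paper's statement rather than in your reasoning: the lemma defines $\mathcal{F}_{r}^{\i}$ with kernel $e^{+\i us}$, whereas the canonical kernel contributes $e^{-\i us/b_{1}}$, so identifying the inner integral as $\mathcal{F}_{r}^{\i}(\cdots)(u/b_{1},t)$ tacitly uses the convention $e^{-\i us}$ (otherwise the evaluation point would be $-u/b_{1}$); the same remark applies to $\mathcal{F}_{r}^{\j}$ in the second identity.
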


Using similar argument as the proof of Theorem $\ref{T53}$, we have the following Lemma.
\begin{lemma}\label{l82}
Suppose $f$ and $\mathcal{F}\in L^{1}(\mathbb{R}, \mathbb{H}) $
then
$$
f(x)= \frac{1}{2\pi}\int_{\mathbb{R}}\mathcal{F}(w)e^{\mu xw}dw,  $$
for $x$ almost everywhere,
where $\mathcal{F}(w)=\int_{\mathbb{R}}f(x)e^{-\mu xw}dx,$ $\mu$ is a  pure unit quaternion, which has unit magnitude having no real part.
\end{lemma}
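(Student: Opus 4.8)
The plan is to mirror the proof of Theorem \ref{T53}, replacing the two-dimensional Gauss--Weierstrass machinery by its one-dimensional, single-axis counterpart. The crucial point is that, since $\mu$ is a pure unit quaternion with $\mu^{2}=-1$, the real subalgebra $\mathbb{R}\oplus\mathbb{R}\mu$ is isomorphic to $\mathbb{C}$; hence every computation involving only the exponentials $e^{-\mu xw}$ takes place inside a commutative field and reduces verbatim to the classical complex case. In particular, for $\alpha>0$,
\[ \int_{\mathbb{R}}e^{\mu(x-y)w}e^{-\alpha w^{2}}\,dw=\sqrt{\pi/\alpha}\;e^{-\frac{(x-y)^{2}}{4\alpha}}, \]
a real-valued Gaussian, so the $\mu$-QFT of $w_{\alpha}(w):=\frac{1}{2\pi}e^{-\alpha w^{2}}$ is, up to a constant, the one-dimensional Gauss--Weierstrass kernel $\mathcal{W}_{\alpha}(x)=\frac{1}{2\sqrt{\pi\alpha}}e^{-x^{2}/(4\alpha)}$.

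First I would establish the one-dimensional analogues of Lemmas \ref{l51} and \ref{l52}. The bound $\big|\int_{\mathbb{R}}\mathcal{F}(w)w_{\alpha}(w)\,dw\big|\le\|f\|_{1}\|w_{\alpha}\|_{1}<\infty$ legitimizes Fubini's theorem, which yields
\[ \frac{1}{2\pi}\int_{\mathbb{R}}\mathcal{F}(w)e^{\mu xw}e^{-\alpha w^{2}}\,dw=\int_{\mathbb{R}}f(y)\,\mathcal{W}_{\alpha}(x-y)\,dy=(f\ast\mathcal{W}_{\alpha})(x); \]
the exchange of $e^{-\mu yw}$ and $e^{\mu xw}$ is allowed because both lie in $\mathbb{R}\oplus\mathbb{R}\mu$, and the order of $f(y)$ and $\mathcal{W}_{\alpha}(x-y)$ is irrelevant since the kernel is real. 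Applying Theorem~1.18 of \cite{stein1971introduction} to each of the four real components of $f$ (which are in $L^{1}(\mathbb{R},\mathbb{R})$, exactly as in Lemma \ref{le313}) shows that $\{\mathcal{W}_{\alpha}\}$ is an approximate identity, so $\|f\ast\mathcal{W}_{\alpha}-f\|_{1}\to 0$ as $\alpha\to 0^{+}$.

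Consequently there is a sequence $\alpha_{k}\to 0^{+}$ along which $(f\ast\mathcal{W}_{\alpha_{k}})(x)\to f(x)$ for almost every $x$. On the other hand, since $\mathcal{F}\in L^{1}(\mathbb{R},\mathbb{H})$ and $|e^{\mu xw}e^{-\alpha w^{2}}|\le 1$, the quaternion Lebesgue dominated convergence theorem \cite{kou2016envelope} (with dominating function $|\mathcal{F}|$) gives $\lim_{\alpha\to 0^{+}}\frac{1}{2\pi}\int_{\mathbb{R}}\mathcal{F}(w)e^{\mu xw}e^{-\alpha w^{2}}\,dw=\frac{1}{2\pi}\int_{\mathbb{R}}\mathcal{F}(w)e^{\mu xw}\,dw$ for every $x$. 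Comparing the two limits along $\alpha_{k}$ produces the claimed pointwise identity for almost every $x$.

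All steps are routine once the reduction to the commutative plane $\mathbb{R}\oplus\mathbb{R}\mu$ is made; the only items that need a little care are the Fubini interchange in the multiplication formula and the fact that the Gaussian is sent to a real Gaussian by the one-dimensional $\mu$-QFT, both of which follow immediately from $\mu^{2}=-1$. I do not anticipate a genuine obstacle here: this lemma is essentially the one-axis, one-dimensional specialization of Theorem \ref{T53}.
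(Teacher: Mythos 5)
Your proposal is correct and follows exactly the route the paper intends: the paper gives no separate proof of Lemma \ref{l82}, stating only that it follows by the same Gauss--Weierstrass summability argument as Theorem \ref{T53}, and your one-dimensional adaptation (multiplication formula via Fubini, approximate identity in $L^{1}$, a.e.\ convergent subsequence, then dominated convergence using $|\mathcal{F}|$ as the dominating function) is precisely that argument, with the commutativity of $\mathbb{R}\oplus\mathbb{R}\mu$ correctly invoked where needed.
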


Then we can prove our desired results.
\begin{theorem}\label{h823}($\textbf{ Inversion Theorem for right-sided QLCT}$)\\
Suppose $f$ and $\mathcal{L}_{R}^{\i,\j}(f) \in L^{1}(\mathbb{R}^{2}, \mathbb{H}),$
then the inversion formula of right-sided QLCT of $f $ is
\begin{eqnarray}\label{TT57}
f(s,t)= \int_{\mathbb{R}^{2}}\mathcal{L}_{R}^{\i,\j}(f)(u,v)K_{A_{2}^{-1}}^{\j}(v,t)K_{A_{1}^{-1}}^{\i} (u,s)dudv,
\end{eqnarray}
for almost everywhere $ (s,t)$.
\end{theorem}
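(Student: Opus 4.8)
The plan is to reduce the two-dimensional right-sided QLCT to two iterated one-dimensional quaternion Fourier transforms by means of Lemma~\ref{l81}, and then to invert each of them with the one-dimensional inversion Lemma~\ref{l82}, using $\mu=\j$ for the inner ($t$-)variable and $\mu=\i$ for the outer ($s$-)variable. Throughout I would exploit the elementary fact that the chirp factors $e^{\pm\mu c(\cdot)^{2}}$ (with $c\in\mathbb{R}$), the normalising constants $\frac{1}{\sqrt{\mu 2\pi b}}$, and the kernels $K_{A_{i}}^{\cdot}$, $K_{A_{i}^{-1}}^{\cdot}$ all have constant modulus, so that multiplication by them does not affect membership in $L^{1}$.

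First I would invert the inner transform. Fixing $u\in\mathbb{R}$, set $h_{u}(t):=\mathcal{L}_{R}^{\i}(f)(u,t)\,e^{\j\frac{a_{2}}{2b_{2}}t^{2}}$, with $\mathcal{L}_{R}^{\i}(f)$ as in Lemma~\ref{l81}. By Tonelli,
\[
\int_{\mathbb{R}}|h_{u}(t)|\,dt=\int_{\mathbb{R}}|\mathcal{L}_{R}^{\i}(f)(u,t)|\,dt\le\frac{1}{\sqrt{2\pi b_{1}}}\,\|f\|_{1}<\infty,
\]
so $h_{u}\in L^{1}(\mathbb{R},\mathbb{H})$ for every $u$. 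The second identity of Lemma~\ref{l81} shows that the one-dimensional right QFT $\mathcal{F}_{r}^{\j}(h_{u})$ is, up to the rescaling $v\mapsto v/b_{2}$ and a unimodular chirp, equal to $\mathcal{L}_{R}^{\i,\j}(f)(u,\cdot)$; since $\mathcal{L}_{R}^{\i,\j}(f)\in L^{1}(\mathbb{R}^{2},\mathbb{H})$, Fubini gives $\mathcal{F}_{r}^{\j}(h_{u})\in L^{1}(\mathbb{R},\mathbb{H})$ for a.e.\ $u$. For such $u$, Lemma~\ref{l82} with $\mu=\j$ recovers $h_{u}$ pointwise a.e., and unwinding the chirp and reassembling the factors into the kernel $K_{A_{2}^{-1}}^{\j}$ yields
\[
\mathcal{L}_{R}^{\i}(f)(u,t)=\int_{\mathbb{R}}\mathcal{L}_{R}^{\i,\j}(f)(u,v)\,K_{A_{2}^{-1}}^{\j}(v,t)\,dv\qquad\mbox{for a.e. }(u,t).
\]

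Next I would invert the outer transform. The identity just obtained gives $|\mathcal{L}_{R}^{\i}(f)(u,t)|\le\frac{1}{\sqrt{2\pi b_{2}}}\int_{\mathbb{R}}|\mathcal{L}_{R}^{\i,\j}(f)(u,v)|\,dv$ for a.e.\ $(u,t)$, and the right-hand side is an $L^{1}(\mathbb{R})$ function of $u$ that is independent of $t$; hence $\mathcal{L}_{R}^{\i}(f)(\cdot,t)\in L^{1}(\mathbb{R},\mathbb{H})$ for a.e.\ $t$. Fixing such a $t$, the first identity of Lemma~\ref{l81} presents $u\mapsto\mathcal{L}_{R}^{\i}(f)(u,t)$ as a rescaled, unimodular-chirp-twisted one-dimensional right QFT of $p_{t}(s):=f(s,t)\frac{1}{\sqrt{2b_{1}\i\pi}}e^{\i\frac{a_{1}}{2b_{1}}s^{2}}$, and $p_{t}\in L^{1}(\mathbb{R},\mathbb{H})$ because $f(\cdot,t)$ is integrable for a.e.\ $t$. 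Both functions being integrable, Lemma~\ref{l82} with $\mu=\i$ recovers $p_{t}$ a.e., and undoing the chirp and reassembling $K_{A_{1}^{-1}}^{\i}$ gives
\[
f(s,t)=\int_{\mathbb{R}}\mathcal{L}_{R}^{\i}(f)(u,t)\,K_{A_{1}^{-1}}^{\i}(u,s)\,du\qquad\mbox{for a.e. }(s,t).
\]

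Finally I would substitute the inner identity into the outer one and interchange the two scalar integrals by Fubini, which is legitimate since the double integrand is dominated by $\frac{1}{2\pi\sqrt{b_{1}b_{2}}}\,|\mathcal{L}_{R}^{\i,\j}(f)(u,v)|\in L^{1}(\mathbb{R}^{2})$; the order of the quaternionic factors $K_{A_{2}^{-1}}^{\j}(v,t)$ and $K_{A_{1}^{-1}}^{\i}(u,s)$ is preserved because Fubini only permutes the real measures $du$ and $dv$. This yields exactly~(\ref{TT57}). The main obstacle is the integrability bookkeeping at the two stages, and in particular establishing $\mathcal{L}_{R}^{\i}(f)(\cdot,t)\in L^{1}(\mathbb{R})$ for a.e.\ $t$: this does not follow from $f\in L^{1}$ alone, since $\mathcal{L}_{R}^{\i}(f)$ need not be integrable in its first argument, and it must instead be extracted from the intermediate inversion together with the hypothesis $\mathcal{L}_{R}^{\i,\j}(f)\in L^{1}$.
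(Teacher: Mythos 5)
Your proposal is correct and follows essentially the same route as the paper: both decompose the right-sided QLCT into iterated one-dimensional transforms via Lemma~\ref{l81}, invert the inner ($t$-)transform with Lemma~\ref{l82}, use the resulting identity to deduce $\mathcal{L}_{R}^{\i}(f)(\cdot,t)\in L^{1}$ from $\mathcal{L}_{R}^{\i,\j}(f)\in L^{1}$, and then invert the outer ($s$-)transform. Your version merely spells out the integrability bookkeeping more explicitly than the paper does.
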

\begin{proof}
On  one hand, by assumption $f \in L^{1}(\mathbb{R}^{2}, \mathbb{H})$, then $ \mathcal{L}_{R}^{\i}(f) \in L^{1}(\mathbb{R}, \mathbb{H})$ of variable $t$, this implies that
 $$\mathcal{L}_{R}^{\i}(u,\cdot)\frac{1}{\sqrt{2\pi b_{2}\j}}e^{\j\frac{a_{2}}{2b_{2}}(\cdot)^{2}} \in L^{1}(\mathbb{R}, \mathbb{H}).$$
 On the other hand, since $ \mathcal{L}_{R}^{\i,\j}(f)(u,v)\in L^{1}(\mathbb{R}^{2}, \mathbb{H}), $
 by the Lemma $\ref{l81}$, we have
 $$\mathcal{F}_{r}^{\j}\left(\mathcal{L}_{R}^{\i}(f)
(u, \cdot)e^{\j\frac{a_{2}}{2b_{2}}(\cdot)^{2}}\right )(u, \frac{v}{b_{2}})\in L^{1}(\mathbb{R}^{2}, \mathbb{H}), $$
 combining with  Lemma $\ref{l82}$, it  follows that
\begin{eqnarray*}
 \mathcal{L}_{R}^{\i}(f)(u, t)e^{\j\frac{a_{2}}{2b_{2}}t^{2}}&=&\int_{\mathbb{R}}
 \mathcal{F}_{r}^{\j}\bigg(\mathcal{L}_{R}^{\i}(f)
(u, t)e^{\j\frac{a_{2}}{2b_{2}}t^{2}}\bigg)(u, \frac{v}{b_{2}})\frac{1}{2b_{2}\pi}
e^{\j(\frac{1}{b_{2}}vt)}dv\\
\mathcal{L}_{R}^{\i}(f)(u, t)e^{\j\frac{a_{2}}{2b_{2}}t^{2}}&=&\int_{\mathbb{R}}
  \mathcal{L}_{R}^{\i,\j}(f)(u,v) \sqrt{2b_{2}\j\pi}e^{\j(-\frac{d_{2}}{2b_{2}}v^{2})}\frac{1}{2b_{2}\pi}
e^{\j(\frac{1}{b_{2}}vt)}dv\\
\mathcal{L}_{R}^{\i}(f)(u, t)&=&\int_{\mathbb{R}} \mathcal{L}_{R}^{\i,\j}(f)(u,v)
 \frac{1}{\sqrt{-2b_{2}\j\pi}}e^{\j(-\frac{d_{2}}{2b_{2}}v^{2}+\frac{1}{b_{2}}vt-\frac{a_{2}}{2b_{2}}t^{2})}dv\\
 &=&\int_{\mathbb{R}} \mathcal{L}_{R}^{\i,\j}(f)(u,v)K_{A_{2}^{-1}}^{\j}(v,t)dv.
\end{eqnarray*}
for almost everywhere $ t.$\\

Similarly, $f\in L^{1}(\mathbb{R}^{2}, \mathbb{H})$ and $\mathcal{L}_{R}^{\i}(f)(\cdot, t) \in L^{1}(\mathbb{R}, \mathbb{H})$  because of
\begin{eqnarray*}
\int_{\mathbb{R}}\left |\mathcal{L}_{R}^{\i}(f)(u, t)\right |du \leq \int_{\mathbb{R}^{2}}\left | \mathcal{L}_{R}^{\i,\j}(f)(u,v)\right |
dvdu,
\end{eqnarray*}
then
\begin{eqnarray*}
f(s,t)&&=\int_{\mathbb{R}} \mathcal{L}_{R}^{\i}(f)(u,t)
 \frac{1}{\sqrt{-2b_{1}\i\pi}}e^{-\i(\frac{d_{1}}{2b_{1}}u^{2}+\frac{1}{2b_{1}}us-\frac{a_{1}}{2b_{1}}s^{2})}du\\
 &&=\int_{\mathbb{R}^{2}} \mathcal{L}_{R}^{\i,\j}(f)(u,v)K_{A_{2}^{-1}}^{\j}(v,t)K_{A_{1}^{-1}}^{\i}(u,s)dvdu,
\end{eqnarray*}
for almost everywhere $ (s,t).$
\end{proof}

The similarly result can be obtained for left-sided QLCT.
\begin{theorem}\label{h824}($\textbf{ Inversion Theorem for left-sided QLCT}$)\\
Suppose $f$ and $\mathcal{L}_{L}^{\i,\j}(f) \in L^{1}(\mathbb{R}^{2}, \mathbb{H}),$
then the inversion formula of left-sided QLCT of $f $ is
\begin{eqnarray}\label{TT58}
f(s,t)= \int_{\mathbb{R}^{2}}K_{A_{1}^{-1}}^{\i} (u,s)K_{A_{2}^{-1}}^{\j}(v,t)\mathcal{L}_{L}^{\i,\j}(f)(u,v)dudv,
\end{eqnarray}
for almost everywhere $ (s,t)$.
\end{theorem}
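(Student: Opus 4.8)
The plan is to imitate the proof of Theorem~\ref{h823}, reducing the two-dimensional left-sided QLCT inversion to two successive one-dimensional inversions. First I would record the left-sided analogue of Lemma~\ref{l81}: writing $\mathcal{L}_{L}^{\j}(f)(s,v):=\int_{\mathbb{R}}K_{A_{2}}^{\j}(t,v)f(s,t)\,dt$ for the one-dimensional left-sided QLCT in the second variable, Fubini gives $\mathcal{L}_{L}^{\i,\j}(f)(u,v)=\int_{\mathbb{R}}K_{A_{1}}^{\i}(s,u)\,\mathcal{L}_{L}^{\j}(f)(s,v)\,ds$, and each of these one-dimensional transforms factors, after pulling the $u$- (respectively $v$-) dependent chirp out to the left and rescaling the frequency variable, as the one-dimensional left-sided quaternion Fourier transform with axis $\i$ (respectively $\j$) of $f$ multiplied on the left by the appropriate Gaussian chirp. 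Since $K_{A_{1}}^{\i}$ is valued in the plane spanned by $\{1,\i\}$ and $K_{A_{2}}^{\j}$ in the plane spanned by $\{1,\j\}$, every rearrangement needed here moves only mutually commuting factors, so this step is routine.

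Next I would push the hypotheses through the two variables one at a time, exactly as in Theorem~\ref{h823}. Because $|K_{A_{2}}^{\j}(t,v)|=1/\sqrt{2\pi b_{2}}$, the hypothesis $f\in L^{1}(\mathbb{R}^{2},\mathbb{H})$ gives $\mathcal{L}_{L}^{\j}(f)(\cdot,v)\in L^{1}(\mathbb{R},\mathbb{H})$ for every $v$, hence so is $e^{\i\frac{a_{1}}{2b_{1}}(\cdot)^{2}}\mathcal{L}_{L}^{\j}(f)(\cdot,v)$. From $\mathcal{L}_{L}^{\i,\j}(f)\in L^{1}(\mathbb{R}^{2},\mathbb{H})$ and Fubini, the section $\mathcal{L}_{L}^{\i,\j}(f)(\cdot,v)$ lies in $L^{1}(\mathbb{R},\mathbb{H})$ for almost every $v$, and by the factorisation above this is precisely the statement that the one-dimensional $\i$-Fourier transform of $e^{\i\frac{a_{1}}{2b_{1}}(\cdot)^{2}}\mathcal{L}_{L}^{\j}(f)(\cdot,v)$ is integrable. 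Lemma~\ref{l82} (with $\mu=\i$) then recovers $\mathcal{L}_{L}^{\j}(f)(s,v)=\int_{\mathbb{R}}K_{A_{1}^{-1}}^{\i}(u,s)\,\mathcal{L}_{L}^{\i,\j}(f)(u,v)\,du$ for almost every $(s,v)$.

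Then I would repeat the argument in the remaining variable. Combining the last identity with $\mathcal{L}_{L}^{\i,\j}(f)\in L^{1}(\mathbb{R}^{2},\mathbb{H})$ yields $\mathcal{L}_{L}^{\j}(f)(s,\cdot)\in L^{1}(\mathbb{R},\mathbb{H})$ for almost every $s$, while $f(s,\cdot)\in L^{1}(\mathbb{R},\mathbb{H})$ for almost every $s$ as well; applying Lemma~\ref{l82} once more (now with $\mu=\j$) to $\mathcal{L}_{L}^{\j}(f)(s,v)=\int_{\mathbb{R}}K_{A_{2}}^{\j}(t,v)f(s,t)\,dt$ gives $f(s,t)=\int_{\mathbb{R}}K_{A_{2}^{-1}}^{\j}(v,t)\,\mathcal{L}_{L}^{\j}(f)(s,v)\,dv$ for almost every $t$. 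Substituting the representation of $\mathcal{L}_{L}^{\j}(f)$ from the previous step and interchanging the two frequency integrals (justified by the integrability of $\mathcal{L}_{L}^{\i,\j}(f)$) produces the asserted inversion formula \eqref{TT58} almost everywhere.

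The part that needs care is the non-commutativity bookkeeping. The left-sided QLCT groups $K_{A_{1}}^{\i}$ on the outside and $K_{A_{2}}^{\j}$ adjacent to $f$, so the inner one-dimensional inversion must undo the $\j$-transform and the outer one the $\i$-transform, and the two inverse kernels must be kept in the correct order and on the correct (left) side of $\mathcal{L}_{L}^{\i,\j}(f)$; moreover Lemma~\ref{l82} must be applied in its left-multiplier form, since for the left-sided transform the exponential kernels sit to the left of the function. Once this ordering is respected, every remaining step --- the Fubini interchanges and the one-dimensional inversion packaged in Lemma~\ref{l82} --- is the mirror image of the corresponding step in the proof of Theorem~\ref{h823}, and no new analytic ingredient is required.
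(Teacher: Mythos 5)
Your strategy is exactly the one the paper intends: the paper gives no separate proof of Theorem~\ref{h824}, saying only that the result is obtained "similarly" to Theorem~\ref{h823}, and your mirrored two-step reduction --- a left-sided analogue of Lemma~\ref{l81}, the integrability transfers via Fubini and the boundedness of the kernels, and two applications of the left-multiplier form of Lemma~\ref{l82} --- is the correct way to carry that out. All of the intermediate steps are sound, and you are right that the only delicate point is the non-commutativity bookkeeping.

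It is precisely at that point, however, that the proposal has a genuine gap in its final step. Your own derivation yields
\begin{equation*}
f(s,t)=\int_{\mathbb{R}^{2}}K_{A_{2}^{-1}}^{\j}(v,t)\,K_{A_{1}^{-1}}^{\i}(u,s)\,\mathcal{L}_{L}^{\i,\j}(f)(u,v)\,du\,dv ,
\end{equation*}
with the inverse kernels in the \emph{reversed} order relative to the forward transform: the $\j$-kernel must end up outermost, because the $\j$-transform is applied first (adjacent to $f$) and hence is undone last. This is consistent with the pattern of Theorems~\ref{R1}, \ref{L1} and \ref{h823}, each of which reverses the kernel order in the inversion formula. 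Equation~\eqref{TT58} as printed instead has the order $K_{A_{1}^{-1}}^{\i}(u,s)K_{A_{2}^{-1}}^{\j}(v,t)$, the \emph{same} order as the forward transform. Since $K_{A_{1}^{-1}}^{\i}$ is valued in $\mathrm{span}\{1,\i\}$ and $K_{A_{2}^{-1}}^{\j}$ in $\mathrm{span}\{1,\j\}$, their commutator is a nonzero $\k$-valued function, so the two orderings give genuinely different integrals; no rearrangement of the iterated one-dimensional inversions can produce the printed order. Your closing claim that the substitution "produces the asserted inversion formula \eqref{TT58}" is therefore not justified: you should either state explicitly that \eqref{TT58} must be read with the kernels in reversed order (almost certainly a misprint in the statement, by comparison with Theorem~\ref{L1}), or supply an argument that the residual $\k$-component integrates to zero --- which it does not in general. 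As written, your argument proves the reversed-order formula, not the statement as printed.
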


\begin{remark}\label{rem319}
\begin{enumerate}
  \item
   In above Lemmas  and Theorems,  we can replace the imaginary units $\i$ and $\j$ by the pure unit quaternion $ \mu_{1}$ and
 $ \mu_{2}$ respectively, which are defined in Equations
$(\ref{hu37})$.
  \item
  When  $A_{1}=A_{2}=\left(
             \begin{array}{cc}
               0 &1 \\
               -1  &0  \\
             \end{array}
           \right)$,
the QLCTs   reduce to the QFTs, then the inversion theorems of right-sided and left-sided QLCTs, such as  Theorems $\ref{c419},\ref{c420},\ref{c421}$, become the inversion theorems for right-sided and left-sided QFTs.
  \item
  When $ A_{1}=\left(
             \begin{array}{cc}
               \cos \alpha & \sin \alpha \\
               -\sin \alpha  & \cos\alpha  \\
             \end{array}
           \right)
, A_{2}=\left(
             \begin{array}{cc}
               \cos \beta & \sin \beta \\
               -\sin \beta & \cos\beta  \\
             \end{array}
           \right)$,
the inversion theorems of different types of QLCTs become the inversion theorems of different types of QFRFTs.
\end{enumerate}

\end{remark}

\setcounter{equation}{0}

\section{Conclusion}\label{sec4}
This paper studied the conditions on the inversion Theorems of QFTs and QLCTs, which are summarized in the following Tables 1 and 2.
\begin{table}[h]\label{tab1}
\caption{Inversion conditions of QFTs}
\centering
\begin{tabular}{|c|c|c|}
\hline
QFT & Inversion conditions for  $f \in L^{1}(\mathbb{R}^{2}, \mathbb{H})$  & Inversion formula\\
\hline
$\multirow{10}{*}{ All  QFTs}  $& In the cross-neighborhood  of $ (x_{0},y_{0}),$ &
$ \multirow{2}{*}{  Formulas (\ref{h81}), (\ref{r1}), (\ref{l1}).}$\\
 & $ f$ is a QBVF  and belongs to $\bf{LC}$. &  \\

 \cline{2-3}

& $\mathcal{F}_{T} \in L^{1}(\mathbb{R}^{2}, \mathbb{H})$,   & $\multirow{10}{*}{ Formulas (\ref{I3}),(\ref{RR1}),(\ref{LL1}).}$ \\
 \cline{2-2}
&$\mathcal{F}_{T,n} \in L^{1}(\mathbb{R}^2,\mathbb{H}),$ & \\
&where $\mathcal{F}_{T, n} $ is the QFT of $f_{n}, n=0,1,2,3$,   &\\
&which are the components of the $f$. &\\
 \cline{2-2}
&$F_{n} \in L^{1}(\mathbb{R}^2,\mathbb{H}),$ &\\
&where $F_{n} $ is the 2D FT of $f_{n}, n=0,1,2,3$,    &\\
&which are the components of the $f$. &\\
 \cline{2-2}
&$f$ is continuous at $(0,0)$, $ F_{n}\geq 0 ,$    &\\
\hline
Two-sided QFT & $f $ has derivatives  & Formula $ (\ref{I3}).$\\
&in the $ L^{1}(\mathbb{R}^{2}, \mathbb{H})$ norm of all orders $ \leq 3.$ & \\
\hline
\end{tabular}
\end{table}

\begin{table}[h!]\label{tab2}
\caption{Inversion conditions of QLCTs}
\centering
\begin{tabular}{|c|c|c|}
\hline
 QLCTs & Inversion conditions for $f \in L^{1}(\mathbb{R}^{2}, \mathbb{H})$ & Inversion formula\\
\hline

\multirow{4}{*}{Two-sided QLCT}
  & In the cross-neighborhood  of $ (x_{0},y_{0}),$ $ f $ is a QBVF  &

 \multirow{2}{*}{ Formula $(\ref{h82})$}  \\

 & and $e^{\i\frac{a_{1}}{2b_{1}}s^{2}}f(s,t)e^{\j\frac{a_{2}}{2b_{2}}t^{2}}$ belongs to $\bf{LC}$. & \\
 \cline{2-3}
 & $\mathcal{L}_{T}^{\i,\j}(f) \in L^{1}(\mathbb{R}^{2}, \mathbb{H})$ & \multirow{3}{*}{ Formula (\ref{F3})}\\
\cline{2-2}
&$e^{\i\frac{a_{1}}{2b_{1}}s^{2}}f(s,t)e^{\j\frac{a_{2}}{2b_{2}}t^{2}}  \in L^{1}(\mathbb{R}^{2}, \mathbb{H}) $ has derivatives &
\\
& in the $ L^{1}(\mathbb{R}^{2}, \mathbb{H})$ norm of all orders $ \leq 3.$ &\\
\cline{1-3}

  \multirow{8}{*}{ Right-sided QLCT}  &   In the cross-neighborhood  of $ (x_{0},y_{0}),$ &

  \multirow{3}{*}{ Formula $(\ref{xx2})$}  \\
   & $ f $ is a QBVF,  $e^{\i\frac{a_{1}}{2b_{1}}s^{2}}f_{a}(s,t)e^{\j\frac{a_{2}}{2b_{2}}t^{2}}$ and  &  \\
   & $ e^{-\i\frac{a_{1}}{2b_{1}}s^{2}}f_{b}(s,t)e^{\j\frac{a_{2}}{2b_{2}}t^{2}}$ belongs to $\bf{LC}$. & \\
   \cline{2-3}

  & $\mathcal{L}_{R}^{\i,\j}(f) \in L^{1}(\mathbb{R}^{2}, \mathbb{H})$ &  Formula (\ref{TT57})\\

   \cline{2-3}
 &$  e^{\i\frac{a_{1}}{2b_{1}}s^{2}}f_{a}(s,t)e^{\j\frac{a_{2}}{2b_{2}}t^{2}}$ and $ e^{-\i\frac{a_{1}}{2b_{1}}s^{2}}f_{b}(s,t)e^{\j\frac{a_{2}}{2b_{2}}t^{2}}  $  &    \multirow{4}{*}{ Formula  (\ref{rr1})}
 \\ & both have derivatives  in the $ L^{1}(\mathbb{R}^{2}, \mathbb{H})$ norm of all orders $ \leq 3.$ &
\\  \cline{2-2}
 & $ \mathcal{L}_{T}^{\i,\j}(f_{a})$ and $\mathcal{L}_{T}^{-\i,\j}(f_{b})  \in L^{1}(\mathbb{R}^{2}, \mathbb{H})$, & \\
 \cline{1-3}

 \multirow{8}{*}{ Left-sided  QLCT}  &   In the cross-neighborhood  of $ (x_{0},y_{0}),$ &

  \multirow{3}{*}{ Formula $(\ref{xxx2})$}  \\
   & $ f $ is a QBVF,  $e^{\i\frac{a_{1}}{2b_{1}}s^{2}}f_{d}(s,t)e^{\j\frac{a_{2}}{2b_{2}}t^{2}}$ and  &  \\
   & $ e^{\i\frac{a_{1}}{2b_{1}}s^{2}}f_{e}(s,t)e^{-\j\frac{a_{2}}{2b_{2}}t^{2}}$ belongs to $\bf{LC}$. & \\
   \cline{2-3}

  & $\mathcal{L}_{L}^{\i,\j}(f) \in L^{1}(\mathbb{R}^{2}, \mathbb{H})$ &  Formula (\ref{TT58})\\

   \cline{2-3}
 &$  e^{\i\frac{a_{1}}{2b_{1}}s^{2}}f_{d}(s,t)e^{\j\frac{a_{2}}{2b_{2}}t^{2}}$ and $ e^{\i\frac{a_{1}}{2b_{1}}s^{2}}f_{e}(s,t)e^{-\j\frac{a_{2}}{2b_{2}}t^{2}} $&    \multirow{4}{*}{ Formula  (\ref{ll1})}
 \\ & both have derivatives  in the $ L^{1}(\mathbb{R}^{2}, \mathbb{H})$ norm of all orders $ \leq 3.$ &
\\  \cline{2-2}
 & $ \mathcal{L}_{T}^{\i,\j}(f_{d})$ and $\mathcal{L}_{T}^{\i,-\j}(f_{e})  \in L^{1}(\mathbb{R}^{2}, \mathbb{H})$. & \\
 \cline{1-3}
  &\multicolumn{2}{c| }{ Where  $f_{a}, f_{b}, f_{e}, f_{d}$  is defined by Equation (\ref{f8})}   \\
  \hline
\end{tabular}
\end{table}

Further investigations on this topic will be focus on the applications of QLCT to problems of color image processing.

\section{Acknowledgements}

The authors acknowledge financial support from the National Natural Science Funds for Young Scholars (No. 11401606, 11501015) and University of Macau No. MYRG2015-00058-FST, MYRG099(Y1-L2)-FST13-KKI and the Macao Science and Technology Development Fund FDCT/094/2011A, FDCT/099/2012/A3.


\end{document}